\documentclass[a4paper, 11pt]{article}
\usepackage{amsmath, amssymb}
\usepackage{amsthm}
\newtheorem{Thm}{Theorem}[section]
\newtheorem{Def}[Thm]{Definition}
\newtheorem{lem}[Thm]{Lemma}
\newtheorem{Prop}[Thm]{Proposition}
\newtheorem{Rem}[Thm]{Remark}

\newcommand{\Ker}{\mathop{\mathrm{Ker}}\nolimits}
\newcommand{\tr}{\mathop{\mathrm{tr}}\nolimits}
\newcommand{\Ind}{\mathop{\mathrm{Ind}}\nolimits}
\newcommand{\Res}{\mathop{\mathrm{Res}}\nolimits}
\newcommand{\Hom}{\mathop{\mathrm{Hom}}\nolimits}

\begin{document}

\title{On the reduction modulo $p$ of representations \mbox{of a quaternion division algebra
over a $p$-adic field}}
\author{Kazuki Tokimoto} 
\date{}
\maketitle


\section{Introduction} \label{section:intro}
Let $F$ be a non-Archimedean local field with a finite residue field of characteristic $p$. The local Langlands correspondence (LLC) establishes a canonical bijection between the set of isomorphism classes of $n$-dimensional Frobenius semisimple Deligne representations of the Weil group $\mathcal{W} _F$ and the set of isomorphism classes of irreducible admissible representations of $\mathrm{GL} _n(F)$. Recently two kinds of analogues of the correspondence have been studied extensively, that is, the $p$-adic Langlands correspondence and the mod $p$ Langlands correspondence. The former seeks for an extended correspondence involving $p$-adic Galois representations (i.e. continuous representations of the absolute Galois group $G_F$ of $F$ over a $p$-adic field) whereas the latter looks for a correspondence between mod $p$ representations (i.e. representations over a field of characteristic $p$) and the compatibility of these two correspondences with the reduction modulo $p$ is one of the keys to their studies.

When $n=2$ and $F=\mathbb{Q} _p$, two correspondences, along with a number of satisfactory properties including the compatibility with the reduction in many cases, have been established (cf. \cite{Br3}). To motivate our study in this paper let us look at an early version of the correspondences. Breuil (cf. \cite{Br1}, \cite{Br2}) conjectured and partly proved a $p$-adic Langlands correspondence involving two-dimensional irreducible crystalline representations of $G_{\mathbb{Q} _p}$ and a mod $p$ Langlands correspondence involving two-dimensional semisimple mod $p$ representations of $G_{\mathbb{Q} _p}$. Roughly speaking, the $p$-adic Langlands correspondence was proposed by combining the local Langlands correspondence with the data necessary to obtain a $p$-adic Galois representation from a Deligne representation and the mod $p$ Langlands correspondence was discovered by classifying the mod $p$ representations of both sides by the same parameters and taking into account the compatibility with the reduction.
Our objective in this paper is an elementary consideration of a similar situation with $\mathrm{GL} _2(F)$ replaced by the multiplicative group $D^{\times}$ of a quaternion division algebra $D$ over $F$.

The local Jacquet-Langlands correspondence (LJLC) gives a canonical bijection between the set of isomorphism classes of discrete series representations of $\mathrm{GL} _2(F)$ and the set $\mathcal{A} _1(D)$ of isomorphism classes of irreducible admissible representations of $D^{\times}$. Composing with LLC, we obtain a canonical bijection between the set of isomorphism classes of two-dimensional indecomposable Deligne representations of $\mathcal{W} _F$ and the set $\mathcal{A} _1(D)$. In this paper we first classify certain irreducible mod $p$ representations of $\mathcal{W} _F$ and of $D^{\times}$, thereby proposing a mod $p$ correspondence (cf. section \ref{section:modp}), and then compute (the semisimplification of) the reduction of certain representations of $\mathcal{W} _F$ and of $D^{\times}$ (cf. section \ref{section:reduction}) to examine if this correspondence is compatible with the composite of LLC and LJLC (cf. section \ref{section:compatibility}).

Irreducible representations of $D^{\times}$ are easier to deal with than those of $\mathrm{GL} _2(F)$ in many aspects. Since $D^{\times}$ is compact modulo center, representations we treat are all finite-dimensional. Also, as the pro-$p$-subgroup $U_D^1$ is normal not only in the unit group $U_D$, but also in $D^{\times}$, every irreducible mod $p$ representation of $D^{\times}$ is inflated from that of $D^{\times}/U_D^1$, which makes irreducible mod $p$ representations of $D^{\times}$ much more accessible than those of $\mathrm{GL} _2(F)$. It follows that the groups essentially involved are isomorphic and a mod $p$ correspondence is obtained in quite a natural manner. However, it turned out that this mod $p$ correspondence and the composed correspondence are not compatible with the reduction modulo $p$ except for the simplest case of ``level zero'' (the mod $p$ correspondence here and its compatibility with the reduction in the level zero case have already been treated by \cite{Vi2}). In fact, in most cases the reduction of an irreducible representation of $D^{\times}$ contains every irreducible mod $p$ representation satisfying the obvious necessary condition (i.e. having the suitable central character). This may be natural in that every irreducible mod $p$ representation of $D^{\times}$ has a non-zero vector fixed by $U_D^1$ and the same condition forces an irreducible admissible representation of $D^{\times}$ to be of level zero. 
At least we make some observation on this phenomenon (cf. \ref{Rem:interpretation}).

\subsection*{Acknowledgements}
The author wishes to thank Christophe Breuil for listening to the author at the poster session of a conference and showing a paper \cite{BD} in preparation at the time whose appendix contains a similar computation and Yoichi Mieda for pointing out a silly misunderstanding of the author.
Also he would like to express his sincere gratitude to his family for always encouraging him,
to his friends for making him feel relaxed and motivating him,
and last but not least to his advisor Takeshi Tsuji for helpful advice, illuminating discussions and uplifting words.

The author was supported by the Program for Leading Graduate Schools, MEXT, Japan.

\subsection*{Notation and terminology} \label{no,te}
We list here the notation and the terminology which we use freely without recalling the definitions. For more on the materials considered here, see \cite{BH}, \cite{Se2} and \cite{Re} among many other standard references.

Let $F$ be a non-Archimedean local field with a finite residue field $k_F$ of characteristic $p$, over which we work in the entire paper. Let $q$ be the cardinality of $k_F$. Let $D$ denote a quaternion division algebra (i.e. a central division algebra of dimension four) over $F$.

If $E/F$ is a finite field extension, we denote the normalized discrete valuation of $E$ by $v_E:E^{\times}\rightarrow \mathbb{Z}$, the valuation ring of $E$ by $\mathfrak{o} _E$, the maximal ideal of $\mathfrak{o} _E$ by $\mathfrak{p} _E$ and the residue field by $k_E$. The unit group is denoted by $U_E=\mathfrak{o} _E^{\times}$ and its congruence subgroups by $U_E^n=1+\mathfrak{p} _E^n$ for $n\geq 1$. We denote by $\mu _E$ the group of roots of unity \emph{of order prime to $p$} in $E^{\times}$ (canonically isomorphic to $k_E^{\times}$). When we take a uniformizer of $E$, it is usually denoted by $\varpi _E$. The ramification index of the extension $E/F$ is denoted by $e(E|F)$.

We use an analogous notation for the division algebra $D$. Let us denote the reduced trace and the reduced norm of $D$ by $\mathrm{Trd}:D\rightarrow F$ and $\mathrm{Nrd}:D^{\times}\rightarrow F^{\times}$. We set $v_D=v_F\circ \mathrm{Nrd} :D^{\times}\rightarrow \mathbb{Z}$. It is a surjective homomorphism. Let $\mathfrak{o} _D=\{ x\in D^{\times}\mid v_D(x)\geq 0\} \cup \{ 0\}$ and $\mathfrak{q}=\{ x\in D^{\times}\mid v_D(x)\geq 1\} \cup \{ 0\}$denote the maximal order of $D$ and its maximal two-sided ideal. The residue ring $k_D=\mathfrak{o} _D/\mathfrak{q}$ is a finite field with $q^2$ elements. We set $U_D=\mathfrak{o} _D^{\times}=\Ker v_D$ and $U_D^n=1+\mathfrak{q} ^n$ for $n\geq 1$. If $\varpi _D \in D^{\times}$ satisfies $v_D(\varpi _D)=1$, it acts on $k_D$ by conjugation as a $q$-th power (i.e. $\varpi _Dx\varpi _D^{-1}\equiv x^q\pmod{U_D^1}$ for all $x\in U_D$).

If $E/F$ is a finite extension, $\mathcal{W} _E$ denotes the Weil group of $E$. If we take a Frobenius element $\mathrm{Fr} \in \mathcal{W} _E$, then $\mathcal{W} _E$ is the semidirect product $\mathcal{I} _E\rtimes \mathrm{Fr} ^{\mathbb{Z}}$ of the inertia subgroup $\mathcal{I} _E$ and the discrete infinite cyclic subgroup $\mathrm{Fr} ^{\mathbb{Z}}$. The unique pro-$p$-Sylow subgroup $\mathcal{P} _E$ of $\mathcal{I} _E$ is called the wild inertia subgroup and the tame inertia group $\mathcal{I} _E/\mathcal{P} _E$ is canonically isomorphic to $\varprojlim _{n} \mathbb{F} _{p^n}^{\times}$, where $\mathbb{F} _{p^n}$ denotes the field with $p^n$ elements. Through this isomorphism the conjugation of the tame inertia group by a Frobenius element $\mathrm{Fr}$ amounts to the usual action of $\mathrm{Fr}$ on $\varprojlim _{n} \mathbb{F} _{p^n}^{\times}$ (i.e. if $i:\mathcal{I} _E/\mathcal{P} _E\rightarrow \varprojlim _{n} \mathbb{F} _{p^n}^{\times}$ stands for the canonical isomorphism, we have $i((\mathrm{Fr}) g (\mathrm{Fr}) ^{-1})=\mathrm{Fr} (i(g))$). The local class field theory provides a natural homomorphism $\mathbf{a} _E:\mathcal{W} _E\rightarrow E^{\times}$ (Artin reciprocity map) which induces an isomorphism between the abelianization $\mathcal{W} _E^{\text{ab}}$ and $E^{\times}$. We normalize it so that geometric Frobenius elements are sent to uniformizers.

From now on, \emph{we fix a field isomorphism $\mathbb{C}\simeq \overline{\mathbb{Q}}_p$} and the representations over $\mathbb{C}$ are invariably identified with the representations over $\overline{\mathbb{Q}} _p$ through this isomorphism. In this paper, we consider admissible representations of locally profinite groups over $\mathbb{C}$ (or $\overline{\mathbb{Q}} _p$) and over $\overline{\mathbb{F}} _p$. To specify it the latter will often be called ``mod $p$ representations'' although it will be usually clear from the context which we are dealing with. One-dimensional smooth representations (i.e. continuous homomorphisms to $\mathbb{C} ^{\times}$ or $\overline{\mathbb{F}} _p^{\times}$) are often called characters.

Let $E/F$ be a finite extension. If $\psi$ is a character of $E$ (resp. of $D$), the level of $\psi$ is defined to be the least integer $n$ such that $\mathfrak{p} _E^n \subset \Ker \psi$ (resp. $\mathfrak{q} ^n\subset \Ker \psi$). If $\chi$ is a character of $E^{\times}$, the level of $\chi$ is defined to be the least integer $n$ such that $U_E^{n+1}\subset \Ker \chi$. Similarly, if $\Pi$ is an irreducible representation of $D^{\times}$ of dimension greater than one, the level of $\Pi$ is defined to be the least integer $n$ such that $U_D^{n+1}\subset \Ker \Pi$.



If $G$ is a locally profinite group and $H$ is a closed subgroup, $\Ind _H^G\sigma$ denotes the smooth induction of a smooth representation $\sigma$ of $H$ to $G$. (However, $H$ is always of finite index in $G$ in this paper and hence the notion of smooth induction and that of compact induction coincide.) Also, $\Res _H^G \sigma$ and $\sigma |_H$ denotes the restricted representation of a smooth representation $\sigma$ of $G$ to $H$.

\section{Classification of mod $p$ representations} \label{section:modp}
\subsection{Irreducible mod $p$ representations of $D^\times$}
In this subsection we classify irreducible admissible mod $p$ representations of $D^\times$. As the abelianization $D^{\times} \rightarrow (D^{\times})^{\text{ab}}$ is isomorphic to the reduced norm map $D^{\times} \overset{\mathrm{Nrd}}{\rightarrow} F^{\times}$, mod $p$ characters of $D^{\times}$ are in a natural one-to-one correspondence with mod $p$ characters of $F^{\times}$ (by composing $\mathrm{Nrd} :D^{\times} \rightarrow F^{\times}$).
We denote the set of isomorphism classes of irreducible admissible representations of $D^{\times}$ over $\overline{\mathbb{F}}_p$ of dimension greater than one by $\mathcal{A} _1^0(D, \overline{\mathbb{F}}_p)$ .

We begin with introducing the language which we use throughout the paper.
 \begin{Def}
 Let $K$ be an algebraically closed field.
 
 An admissible pair $(E/F, \chi)$ with values in $K$ is a pair in which $E/F$ is a tamely ramified quadratic extension and $\chi$ is a character of $E^{\times}$ with values in $K$, satisfying the following conditions:
 \begin{enumerate}
  \item $\chi$ does not factor through the norm map $\mathrm{N}_{E/F} :E^{\times} \rightarrow F^{\times}$ $($i.e. there does not exist any character $\varphi$ of $F^{\times}$ such that $\chi =\varphi \circ \mathrm{N} _{E/F})$.
  \item if $\chi |_{U_E^1}$ does factor through the norm map $\mathrm{N}_{E/F}$, then $E$ is unramified over $F$.
 \end{enumerate}
 The level of an admissible pair $(E/F, \chi)$ is defined to be the level of $\chi$. An admissible pair $(E/F, \chi)$ is said to be minimal if the level of $(\varphi \circ \mathrm{N} _{E/F}) \otimes \chi$ is not strictly smaller than that of $\chi$ for any character $\varphi$ of $F^{\times}$. We often call minimal admissible pairs simply by minimal pairs.
 
 We say that admissible pairs $(E/F, \chi)$ and $(E^{\prime} /F, \chi ^{\prime})$ are $F$-isomorphic if there exists an $F$-isomorphism $j:E\rightarrow E^{\prime}$ such that $\chi =\chi ^{\prime} \circ j$. The set of $F$-isomorphism classes of admissible pairs with values in $K$ is denoted by $\mathbb{P} _2(F, K)$. If $K=\mathbb{C}$, an admissible pair with values in $\mathbb{C}$ is simply called an admissible pair and we set $\mathbb{P} _2(F)=\mathbb{P} _2(F, \mathbb{C})$.
 \end{Def}
 \begin{Rem}
    \begin{enumerate}
   \item Since every character of a subgroup of $F^{\times}$ of index two with values in $K$ can be extended to a character of $F^{\times}$, Hilbert 90 shows that the condition \text{1} in the definition is equivalent to
   \[ \chi \neq \chi ^{\theta} \text{ for } \theta \in \mathrm{Gal} (E/F)\setminus \{ 1\}. \]
   \item Admissible pairs $(E/F, \chi)$ and $(E/F, \chi ^{\prime})$ are $F$-isomorphic if and only if $\chi ^{\theta} =\chi ^{\prime}$ for some element $\theta \in \mathrm{Gal} (E/F)$.
   \item If $\chi$ in an admissible pair $(E/F, \chi)$ is trivial on $U_E^1$ $($e.g. if the characteristic of $K$ is $p)$, $E$ is unramified over $F$ by the condition 2.
  \end{enumerate}
 \end{Rem}
 We also need another closely related notion to facilitate the computation of the reduction.
 \begin{Def} \label{Def:reg}
 Let $E/F$ be a quadratic Galois extension. Let $\xi$ be a mod $p$ character of $E^{\times}$.
 
 We say that $\xi$ is regular if $\xi$ does not factor through the norm map $\mathrm{N}_{E/F} :E^{\times} \rightarrow F^{\times}$.
 \end{Def} 
 \begin{Rem}\label{Rem:regchar}
  \begin{enumerate}
   \item If $E/F$ is an unramified quadratic extension, we have a natural identification
    \[ \mathbb{P} _2(F, \overline{\mathbb{F}} _p )= \{ \xi \mid \xi \text{ is a regular mod $p$ character of $E^{\times}$} \} /\sim , \]
           where $\sim$ is an equivalence relation defined by
    \[ \xi \sim \xi ^{\prime} \text{ if and only if } \xi ^{\theta}=\xi ^{\prime} \]
           for some element $\theta \in \mathrm{Gal} (E/F)$.
   \item Let $E/F$ and $\xi$ be as in the Definition \ref{Def:reg}.
   
           If $E/F$ is unramified, then $\xi$ is regular if and only if $\xi (\zeta_E^{q-1}) \neq 1$, where $\zeta_E \in E^{\times}$ is a generator of $\mu_E$.
           
           If $E/F$ is totally ramified, then $\xi$ is regular if and only if $\xi (-1) \neq 1$. $($Note that $\xi$ is trivial on $U_E^1$.$)$ In particular, $\xi$ is irregular if $p=2$.
   \item As in the previous remark, we often take some elements to state the result explicitly. Therefore we fix the notation here to avoid explaining the same setting repetitively.
    \begin{enumerate}
     \item \label{unramnotation} If a quadratic unramified extension $E/F$ (sometimes denoted by $E_0/F$) is fixed in the context, then we take a generator $\zeta_E \in \mu_E$ and a uniformizer $\varpi_F \in F$, and set $\zeta_F =\zeta_E ^{q+1} \in \mu_F$.
     \item \label{tameramnotation} If a \emph{tamely} quadratic totally ramified extension $E/F$ is fixed in the context, then we take a generator $\zeta_F \in \mu_F$ and a uniformizer $\varpi_E \in E$ such that $\varpi_E^2$ is a uniformizer of $F$, and set $\varpi_F =\varpi_E^2 \in F$.
    \end{enumerate}
   \end{enumerate}
  \end{Rem}
 In what follows, we mainly use the more flexible notion of regular mod $p$ characters instead of that of admissible pairs with values in $\overline{\mathbb{F}} _p$.
 \begin{Prop} \label{Prop:modquaternion}
 Let $E/F$ be an unramified quadratic extension and $\xi$ a mod $p$ character of $E^{\times}$. Let us take an $F$-embedding $E\rightarrow D$. Let us denote by $\xi ^{\dagger}$ the mod $p$ character of $E^{\times}U_D^1=F^{\times}U_D$ such that $\xi ^{\dagger}|_{E^{\times}}=\xi$ and $\xi ^{\dagger}|_{U_D^1}=1$. Finally, let us set
 \[ \pi _{\xi}=\Ind_{E^{\times}U_D^1}^{D^{\times}} \xi ^{\dagger} .\]
 Then the following assertions hold.
  \begin{enumerate}
   \item The representation $\pi _{\xi}$ is independent up to isomorphism of the choice of $F$-embedding $E\rightarrow D$.
   \item The representation $\pi _{\xi}$ is irreducible if and only if $\xi$ is regular.
    Every irreducible mod $p$ representation of $D^{\times}$ of dimension greater than one can be expressed this way. If $\xi$ and $\xi ^{\prime}$ are regular mod $p$ characters, $\pi _{\xi}$ and $\pi _{\xi ^{\prime}}$ are isomorphic if and only if $\xi ^{\theta} =\xi ^{\prime}$ for some $\theta \in \mathrm{Gal} (E/F)$.
    
    In other words, the construction above induces a bijection
    \begin{equation} \label{modparam}
    \mathbb{P} _2 (F, \overline{\mathbb{F}} _p ) \simeq \mathcal{A} _1^0(D, \overline{\mathbb{F}} _p ).
    \end{equation}
    
    In particular, irreducible admissible mod $p$ representations of $D^{\times}$ are either one-dimensional or two-dimensional. 
   \item Let $\xi$ be irregular. Then $\xi ^{\dagger}$ admits an extension to a mod $p$ character of $D^{\times}$. There exist two such extensions $\tilde{\xi} _1, \tilde{\xi} _2$ if $p\neq 2$ and only one extension $\tilde{\xi}$ if $p=2$. We have
   \begin{alignat*}{2}
        &\pi _{\xi} \cong \tilde{\xi}_1 \oplus \tilde{\xi}_2& \qquad &\text{ if } p\neq 2, \\
        &\pi _{\xi} \text{ is a non-split extension of $\tilde{\xi}$ with itself}& &\text{ if } p=2.
   \end{alignat*}
  \end{enumerate}
 \end{Prop}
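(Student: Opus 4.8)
The plan is to carry out everything via Mackey theory for the index-two subgroup $H := E^{\times}U_D^1 = F^{\times}U_D$ of $G := D^{\times}$. First I would record the elementary facts about this subgroup: a uniformizer $\varpi_D$ of $D$ has odd $v_D$ whereas every element of $H$ has even $v_D$, so $H$ is normal of index $2$ and $\varpi_D$ represents the nontrivial coset. The one computation used repeatedly is
\[ (\xi^{\dagger})^{\varpi_D} = (\xi^{\theta})^{\dagger} \quad\text{as characters of } H, \]
where $\theta$ is the nontrivial element of $\mathrm{Gal}(E/F)$: both sides are trivial on $U_D^1$ (note that $\xi$ is automatically trivial on the pro-$p$ group $U_E^1$, as $\overline{\mathbb{F}}_p^{\times}$ has no $p$-torsion), both restrict to $\xi$ on the central subgroup $F^{\times}$, and on $\mu_E\cong k_D^{\times}$ conjugation by $\varpi_D$ acts as the $q$-th power map, which is also how $\theta$ acts (using $q\equiv q^{-1}\bmod q^2-1$ to dispel any ambiguity). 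I would also use that, by Hilbert $90$ (Remark after the Definition), $\xi$ is regular precisely when $\xi\neq\xi^{\theta}$. Given this, assertion (1) is immediate from Skolem--Noether: two $F$-embeddings $E\hookrightarrow D$ are $D^{\times}$-conjugate, and conjugating carries one inducing datum to the other, so the induced representations are isomorphic.

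For assertion (2), Mackey's formula gives $\Res_H\pi_{\xi}\cong\xi^{\dagger}\oplus(\xi^{\dagger})^{\varpi_D}=\xi^{\dagger}\oplus(\xi^{\theta})^{\dagger}$. Hence $\pi_{\xi}$ is irreducible iff $\xi^{\dagger}\neq(\xi^{\dagger})^{\varpi_D}$, i.e. iff $\xi\neq\xi^{\theta}$, i.e. iff $\xi$ is regular. Moreover Frobenius reciprocity gives $\Hom_G(\pi_{\xi'},\pi_{\xi})=\Hom_H((\xi')^{\dagger},\Res_H\pi_{\xi})$, which is nonzero iff $(\xi')^{\dagger}\in\{\xi^{\dagger},(\xi^{\theta})^{\dagger}\}$, i.e. iff $\xi'\in\{\xi,\xi^{\theta}\}$; since nonzero homomorphisms between irreducibles are isomorphisms, this yields the isomorphism criterion.

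The substantive part of (2) is that every irreducible mod $p$ representation $\Pi$ of $G$ with $\dim\Pi>1$ is of the form $\pi_{\xi}$. Here I would argue as follows. The pro-$p$ group $U_D^1$ has nonzero fixed vectors in $\Pi$, which by irreducibility forces it to act trivially, so $\Pi$ is inflated from $G/U_D^1$; it is finite-dimensional since $\Pi=\Pi^{U_D^1}$ and $\Pi$ is admissible. As $U_D$ is normal in $G$ with $U_D/U_D^1\cong k_D^{\times}$ of order prime to $p$, Clifford theory gives $\Res_{U_D}\Pi\cong\eta^{\oplus a}\oplus(\eta^{q})^{\oplus b}$ for a character $\eta$ of $k_D^{\times}$, its Frobenius twist $\eta^{q}$ being the only other $G$-conjugate (the $G$-action on characters of $U_D/U_D^1$ factoring through $G/U_D\cong\mathbb{Z}$, with $\varpi_D$ acting by $\eta\mapsto\eta^{q}$). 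If $\eta^{q}=\eta$, then $U_D$ acts on $\Pi$ by the scalar $\eta$, so $\Pi$ is a projective representation of the cyclic group $G/U_D\cong\mathbb{Z}$, forcing $\dim\Pi=1$ since $H^2(\mathbb{Z},\overline{\mathbb{F}}_p^{\times})=0$ --- a contradiction. Hence $\eta\neq\eta^{q}$. Now $H/U_D^1\cong E^{\times}/U_E^1$ is abelian, so $\Res_H\Pi$ is a sum of characters, and comparing restrictions to $\mu_E$ produces a summand of the form $\xi^{\dagger}$ with $\xi:=\xi^{\dagger}|_{E^{\times}}$ satisfying $\xi|_{\mu_E}=\eta$; then $\xi\neq\xi^{\theta}$ (they already differ on $\mu_E$), so $\xi$ is regular, $\pi_{\xi}$ is irreducible, and $\Hom_G(\pi_{\xi},\Pi)=\Hom_H(\xi^{\dagger},\Res_H\Pi)\neq0$, whence $\Pi\cong\pi_{\xi}$ and $\dim\Pi=2$. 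Together with the fact that one-dimensional representations factor through $\mathrm{Nrd}$, this gives the stated dichotomy.

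For (3), irregularity of $\xi$ means $\xi=\xi^{\theta}$, so $\xi^{\dagger}$ is invariant under $G/H$. Since the coset $\varpi_D^2 U_D^1$ is central in $G/U_D^1$ one has $\varpi_D^2\in F^{\times}U_D^1\subseteq H$, so I would extend $\xi^{\dagger}$ to a character $\tilde{\xi}$ of $G$ by taking $\tilde{\xi}(\varpi_D)$ to be a square root of $\xi^{\dagger}(\varpi_D^2)$; one checks directly (using invariance of $\xi^{\dagger}$) that this is well defined, and there are two extensions $\tilde{\xi}_1,\tilde{\xi}_2$ if $p\neq2$ and exactly one $\tilde{\xi}$ if $p=2$. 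By the projection formula $\pi_{\xi}=\Ind_H^G(\tilde{\xi}|_H)\cong\tilde{\xi}\otimes\Ind_H^G\mathbf{1}\cong\tilde{\xi}\otimes\overline{\mathbb{F}}_p[G/H]$, so the claim reduces to the structure of $\overline{\mathbb{F}}_p[\mathbb{Z}/2]$: it is semisimple and equals $\mathbf{1}\oplus\eta_{G/H}$ when $p\neq2$ (giving $\pi_{\xi}\cong\tilde{\xi}_1\oplus\tilde{\xi}_2$), and it is a local ring, a non-split self-extension of $\mathbf{1}$, when $p=2$ (giving that $\pi_{\xi}$ is a non-split self-extension of $\tilde{\xi}$). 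The main obstacle I anticipate is precisely the surjectivity step in (2): assembling the reduction to $G/U_D^1$, the Clifford-theoretic analysis of $\Res_{U_D}\Pi$, the ``projective representations of $\mathbb{Z}$ are one-dimensional'' argument, and the bookkeeping that produces a character of $H$ in the exact shape $\xi^{\dagger}$ with $\xi$ regular; the remaining assertions are routine Mackey and Frobenius-reciprocity arguments, the only characteristic-$p$ subtlety being the non-semisimplicity of $\overline{\mathbb{F}}_2[\mathbb{Z}/2]$.
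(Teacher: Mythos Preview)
Your proof is correct and follows essentially the same Mackey/Frobenius-reciprocity approach as the paper, with two small tactical differences worth noting. For the surjectivity step in (2), the paper is slightly more economical: it does not argue in advance that $\xi$ is regular (your Clifford/projective-representation-of-$\mathbb{Z}$ detour), but simply produces a character $\xi^{\dagger}$ of $H$ inside $\Pi$, gets a nonzero map $\pi_{\xi}\to\Pi$ by Frobenius reciprocity, and observes that since $\dim\pi_{\xi}=2$ and $\dim\Pi>1$ this must be an isomorphism; regularity of $\xi$ then follows \emph{a posteriori} from the already-established irreducibility criterion. For (3) in the case $p=2$, the paper shows non-splitness by computing $\dim\mathrm{End}_{D^{\times}}(\pi_{\xi})=2$ via the same Mackey/Frobenius identity, rather than via your projection-formula reduction to $\overline{\mathbb{F}}_p[\mathbb{Z}/2]$; both arguments are equally short.
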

 \begin{proof}
 Changing the choice of $F$-embedding only amounts to the composition of an inner automorphism of $D^{\times}$ by Skolem-Noether theorem and hence 1 follows immediately.
 
 Since $\pi _{\xi}$ is two-dimensional, it is reducible if and only if it has a one-dimensional subrepresentation, which is equivalent to the existence of an extension of $\xi ^{\dagger}$ to $D^{\times}$ by Frobenius reciprocity. One can verify by an elementary calculation that $\xi ^{\dagger}$ extends to $D^{\times}$ if and only if $\xi ^{\dagger}$ is fixed under the conjugation action of $\varpi _{D}^{\mathbb{Z}}$, where $\varpi _D \in D^{\times}$ satisfies $v_D(\varpi _D)=1$. 
 Observing that the mod $p$ character $\xi ^{\dagger}$ is trivial on the normal pro-$p$-subgroup $U_D^1$ and $\varpi _{D} \zeta _E \varpi _{D}^{-1} \equiv \zeta _E^q \pmod{U_D^1}$ for any generator $\zeta _E \in \mu _E$, we see that this is certainly equivalent to $\xi$ being irregular. If $\xi$ is irregular, any extensions of $\xi ^{\dagger}$ appear both as a subrepresentation and as a quotient representation of $\pi _{\xi}$.

 If $p\neq 2$ and $\xi$ is irregular, $\pi _{\xi}$ has two distinct subrepresentations $\tilde{\xi} _1$ and $\tilde{\xi} _2$ and is therefore a direct sum of these subrepresentations.

 Mackey's decomposition \cite[7.3]{Se1}, \cite[I.5.5]{Vi1} gives $\Res_{E^{\times}U_D^1}^{D^{\times}} \pi _{\xi} \cong \xi ^{\dagger} \oplus (\xi ^{\theta})^{\dagger}$ and Frobenius reciprocity yields
 \[ \Hom_{D^{\times}}(\pi _{\xi ^{\prime}}, \pi _{\xi}) \cong \Hom_{E^{\times}U_D^1}({\xi ^{\prime}}^{\dagger}, \xi ^{\dagger} \oplus (\xi ^{\theta})^{\dagger}) ,\]
 for any mod $p$ characters $\xi$ and $\xi ^{\prime}$ of $E^{\times}$.
 If $\xi$ is regular, this implies the stated condition for $\pi _{\xi} \cong \pi _{\xi ^{\prime}}$. Taking $\xi =\xi^{\prime}$ to be irregular, we see that $\mathrm{End}_{D^{\times}}(\pi _{\xi})$ is two-dimensional and hence $\pi _{\xi}$ is not a direct sum of two isomorphic representations if $p=2$ and $\xi$ is irregular.

 It now remains to show that the induced map (\ref{modparam}) is surjective. Let $(\pi, V)$ be an irreducible mod $p$ representation of $D^{\times}$ of dimension greater than one. As $U_D^1$ is pro-$p$ and $V$ is a direct limit of $p$-groups, a standard argument shows that the invariant part $V^{U_D^1}\neq 0$.\footnote{In fact, $U_D^1$ is normal in $D^{\times}$ and hence $V^{U_D^1}=V$ by irreducibility. In other words, every irreducible mod $p$ representation of $D^{\times}$ is inflated from that of $D^{\times}/U_D^1$. However, we are not going to need this fact in what follows.} Since the order of $U_D/U_D^1$ is prime to $p$, $V^{U_D^1}$ is semisimple as a representation of $U_D$ and in particular contains a character of $U_D$. On the other hand, $\pi$ admits a central character by Schur's lemma. These characters are consistent and define a character $\xi ^{\prime}$ of $F^{\times}U_D =E^{\times}U_D^1$ contained in $V$. Let us put $\xi =\xi ^{\prime}|_E$. By Frobenius reciprocity the inclusion of $\xi ^{\prime}$ into $V$ induces a $D^{\times}$-equivariant map $\pi _{\xi} =\Ind_{F^{\times}U_D}^{D^{\times}} \xi ^{\prime} \rightarrow \pi$, which is non-zero and hence surjective by irreducibility. Since $\pi$ is not one-dimensional, it is an isomorphism. This completes the proof.
  
 \end{proof}
 \begin{Rem} \label{Rem:quaternionunram}
 If $\xi$ is irregular $($and $E/F$ is unramified$)$, then any extension of $\xi$ is expressed as $(\varphi \circ \mathrm{Nrd})$ with some mod $p$ character $\varphi$ of $F^{\times}$ such that
  \[ \varphi (\zeta _F) =\xi (\zeta _E) \text{ and } \varphi (\varpi _F)^2=\xi (\varpi _F), \]
  where $\zeta_E, \zeta_F \text{ and } \varpi _F$ are taken as in \emph{Remark }$\ref{Rem:regchar}.\ref{unramnotation}$.
 \end{Rem}
  
 Although this much suffices for the classification of irreducible mod $p$ representations of $D^{\times}$, we study further the analogous situation with $E$ being ramified over $F$ for the convenience of computations later.
 
 \begin{Prop} \label{Prop:quaternionmodpram}
  Let $E/F$ be a totally ramified quadratic extension of $F$ and $\nu$ a mod $p$ character of $E^{\times}$. Let us take an $F$-embedding $E\rightarrow D$. Let us denote by $\nu ^{\ddagger}$ the extension of $\nu$ to $E^{\times}U_D^1$ such that $\nu ^{\ddagger}|_{U_D^1}=1$.
  
  Then the induced representation $\Ind_{E^{\times}U_D^1}^{D^{\times}} \nu ^{\ddagger}$ is independent up to isomorphism of the choice of $F$-embedding and we have
  \[ \Ind_{E^{\times}U_D^1}^{D^{\times}} \nu ^{\ddagger} \cong (\bigoplus _{\pi \in I_1} \pi ) \oplus (\bigoplus _{\tilde{\nu} \in I_2} \tilde{\nu}), \]
  where $I_1$ denotes the set of $($isomorphism classes of$)$ two-dimensional irreducible mod $p$ representations of $D^{\times}$ with a central character $\nu |_{F^{\times}}$ and $I_2$ denotes the set of mod $p$ characters of $D^{\times}$ extending $\nu ^{\ddagger}$.
  
  If $E_0$ denotes an unramified quadratic extension of $F$, we have
  \[ I_1= \{ \pi _{\xi} \mid \xi \text{ is a regular mod $p$ character of $E_0^{\times}$ extending } \nu |_{F^{\times}} \}, \]
  \[ I_2= \{ \varphi \circ \mathrm{Nrd} \mid \varphi \text{ is a mod $p$ character of $F^{\times}$ such that } (\varphi \circ \mathrm{Nrd}) |_{E^{\times}} =\nu \}, \]
  and
  \begin{equation*}
    (\# I_1, \ \# I_2)= \begin{cases}
                         (\frac{q+1}{2}, \ 0) & \text{ if } \nu \text{ is regular}\\
                         (\frac{q-1}{2}, \ 2) & \text{ if } \nu (-1)=1 \text{ and } p\neq 2\\
                         (\frac{q}{2}, \ 1) & \text{ if } p=2.
                        \end{cases}
  \end{equation*}
 \end{Prop}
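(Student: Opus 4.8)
The plan is to restrict $\sigma := \Ind_{E^\times U_D^1}^{D^\times} \nu^{\ddagger}$ to the unit group $U_D$, to observe that this restriction is multiplicity-free, and to read off the decomposition of $\sigma$ directly from the resulting canonical isotypic lines. Independence of $\Ind_{E^\times U_D^1}^{D^\times} \nu^{\ddagger}$ of the choice of $F$-embedding $E\rightarrow D$ follows from the Skolem--Noether theorem exactly as in Proposition \ref{Prop:modquaternion}, so I concentrate on the decomposition. I also record at the outset that, since the abelianisation of $D^\times$ is $\mathrm{Nrd}$ and every mod $p$ character of $F^\times$ is trivial on $U_F^1$, the set $I_2$ is exactly the set of characters $\varphi\circ\mathrm{Nrd}$ of $D^\times$ with $(\varphi\circ\mathrm{Nrd})|_{E^\times}=\nu$; and since the central character of $\pi_\xi$ is $\xi|_{F^\times}$, Proposition \ref{Prop:modquaternion} identifies $I_1$ with the set of $\pi_\xi$ for $\xi$ a regular mod $p$ character of an unramified quadratic extension $E_0^\times$ satisfying $\xi|_{F^\times}=\nu|_{F^\times}$.

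First I would compute $\Res_{U_D}^{D^\times}\sigma$. If $\varpi_E\in E\subset D$ is a uniformizer of $E$, then $v_D(\varpi_E)=v_F(\mathrm{N}_{E/F}(\varpi_E))=1$, so $\varpi_E$ is a uniformizer of $D$, $D^\times=U_D\varpi_E^{\mathbb{Z}}$, and $U_D\cdot(E^\times U_D^1)=D^\times$. There is therefore a single double coset $U_D\backslash D^\times/E^\times U_D^1$, so Mackey's decomposition gives $\Res_{U_D}\sigma\cong\Ind_{\mu_E U_D^1}^{U_D}(\nu^{\ddagger}|_{\mu_E U_D^1})$, a representation inflated from $U_D/U_D^1=k_D^\times$. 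Since $k_D^\times$ is abelian and the image of $\mu_E$ in $k_D^\times$ is the subgroup $k_F^\times$ of index $q+1$, this is the direct sum $\bigoplus_{\chi\in S}\chi$ of the $q+1$ pairwise distinct characters $\chi$ of $k_D^\times$ with $\chi|_{k_F^\times}=\bar{\nu}$, where $\bar{\nu}$ denotes $\nu|_{\mu_E}$ viewed on $k_F^\times$; in particular $\dim\sigma=q+1$ and $\Res_{U_D}\sigma$ is multiplicity-free. Moreover conjugation by $\varpi_E$ acts on $k_D^\times$ as the $q$-th power map, hence on $S$ as an involution $\chi\mapsto\chi^q$ fixing $k_F^\times$ pointwise; write $S=S_{\mathrm{fix}}\sqcup S_{\mathrm{free}}$ for the partition into $\varpi_E$-fixed characters and free orbits of size two.

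Here is the key point. Because $\Res_{U_D}\sigma$ is multiplicity-free, each $U_D$-isotypic subspace $\sigma_\chi$ ($\chi\in S$) is a canonical line, and $\varpi_E$ permutes these lines within $\varpi_E$-orbits; since $U_D$ is normal in $D^\times$, it follows that $\sigma_\chi$ is a $D^\times$-subrepresentation for $\chi\in S_{\mathrm{fix}}$ and that $W_\chi:=\sigma_\chi\oplus\sigma_{\chi^q}$ is a $D^\times$-subrepresentation for $\{\chi,\chi^q\}\in S_{\mathrm{free}}$. Grouping the isotypic lines by $\varpi_E$-orbits yields the \emph{genuine} direct sum decomposition
\[ \sigma \cong \Big( \bigoplus_{\{\chi,\chi^q\}\in S_{\mathrm{free}}} W_\chi \Big) \oplus \Big( \bigoplus_{\chi\in S_{\mathrm{fix}}} \sigma_\chi \Big), \]
the summands having pairwise disjoint $U_D$-supports and dimensions adding up to $q+1$. (This is exactly what makes the decomposition split even when $p=2$, in contrast to Proposition \ref{Prop:modquaternion}, where the analogous restriction to $U_D$ is \emph{not} multiplicity-free.) Each $\sigma_\chi$ with $\chi\in S_{\mathrm{fix}}$ is a one-dimensional representation of $D^\times$, hence $\varphi\circ\mathrm{Nrd}$ for some $\varphi$, and being a subrepresentation of $\sigma$ it restricts to $\nu^{\ddagger}$ on $E^\times U_D^1$ by Frobenius reciprocity; thus $\sigma_\chi\in I_2$. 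Each $W_\chi$ with $\{\chi,\chi^q\}\in S_{\mathrm{free}}$ has central character $\nu|_{F^\times}$ and is irreducible, because a proper nonzero subrepresentation would be one of the lines $\sigma_\chi,\sigma_{\chi^q}$, which as a one-dimensional representation of $D^\times$ would have a $\varpi_E$-invariant $U_D$-character --- impossible since $\chi\neq\chi^q$; hence $W_\chi\in I_1$.

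It remains to set up bijections $S_{\mathrm{free}}\leftrightarrow I_1$ and $S_{\mathrm{fix}}\leftrightarrow I_2$ and to count. For $\pi_\xi\in I_1$ the same Mackey computation gives $\Res_{U_D}\pi_\xi=\bar{\xi}\oplus\bar{\xi}^q$ with $\bar{\xi}\in S$ and, by regularity and Remark \ref{Rem:regchar}, $\bar{\xi}\neq\bar{\xi}^q$, so $\{\bar{\xi},\bar{\xi}^q\}\in S_{\mathrm{free}}$; and since a two-dimensional irreducible mod $p$ representation of $D^\times$ is determined up to isomorphism by its restriction to $U_D$ together with its central character, $W_{\bar{\xi}}\cong\pi_\xi$, so $\{\chi,\chi^q\}\mapsto W_\chi$ and $\pi_\xi\mapsto\{\bar{\xi},\bar{\xi}^q\}$ are mutually inverse. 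Similarly, a character $\varphi\circ\mathrm{Nrd}$ extending $\nu^{\ddagger}$ restricts on $U_D$ to $\bar{\varphi}\circ\mathrm{N}_{k_D/k_F}$, a $\varpi_E$-fixed element of $S$, and conversely every $\chi\in S_{\mathrm{fix}}$ is $\bar{\varphi}\circ\mathrm{N}_{k_D/k_F}$ for a unique character $\bar{\varphi}$ of $k_F^\times$ with $\bar{\varphi}^2=\bar{\nu}$, which extends in exactly one way to a character $\varphi\circ\mathrm{Nrd}$ of $D^\times$ with $(\varphi\circ\mathrm{Nrd})|_{E^\times}=\nu$. Hence $\#I_2=\#S_{\mathrm{fix}}=\#\{\bar{\varphi}:\bar{\varphi}^2=\bar{\nu}\}$ and $\#I_1=\#S_{\mathrm{free}}=(q+1-\#I_2)/2$; counting square roots of $\bar{\nu}$ in the cyclic group of characters of $k_F^\times$ (of order $q-1$) gives $\#I_2=1$ if $q$ is even, and if $q$ is odd $\#I_2=2$ when $\bar{\nu}(-1)=1$ (equivalently $\nu(-1)=1$, the irregular case) and $\#I_2=0$ when $\nu$ is regular, which is the stated trichotomy. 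The step I expect to need the most care is the identification $W_{\bar{\xi}}\cong\pi_\xi$ --- that is, isolating cleanly that a two-dimensional irreducible representation of $D^\times$ is pinned down by its $U_D$-restriction and central character, which uses the classification of Proposition \ref{Prop:modquaternion} together with the fact that $\xi$, being trivial on $U_{E_0}^1$, is determined by its values on $\mu_{E_0}$ and on $\varpi_F$ --- and, relatedly, keeping the finite-field bookkeeping (the equivalence of ``$\nu$ regular'' with ``$\bar{\nu}$ a non-square'') straight; everything else is routine Mackey theory and dimension counting.
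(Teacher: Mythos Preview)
Your proof is correct and takes a genuinely different route from the paper's. The paper restricts $\sigma$ to $E_0^{\times}U_D^1$ (for an unramified quadratic $E_0/F$ embedded in $D$) and obtains, via Mackey, $\Ind_{F^{\times}U_D^1}^{E_0^{\times}U_D^1}(\nu^{\ddagger}|_{F^{\times}U_D^1})$; it then uses Frobenius reciprocity to produce a nonzero map $\pi_{\xi}\to\sigma$ for every mod $p$ character $\xi$ of $E_0^{\times}$ extending $\nu|_{F^{\times}}$, sorts these $\xi$ into regular pairs and irregular singletons, observes that for each irregular $\xi$ exactly the one extension of $\xi^{\dagger}$ that also extends $\nu^{\ddagger}$ actually embeds, and concludes by a dimension count. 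Your argument instead restricts $\sigma$ to $U_D$, exploits that this restriction is multiplicity-free (a sum of the $q+1$ characters of $k_D^{\times}$ extending $\bar\nu$), and uses the canonical isotypic lines together with the $\varpi_E$-action to decompose $\sigma$ directly into $D^{\times}$-stable pieces. The paper's approach stays closer to the parametrisation by $\pi_{\xi}$ already set up in Proposition~\ref{Prop:modquaternion} and is perhaps the more expected one in context; your approach is more intrinsic, makes the semisimplicity of $\sigma$ transparent (no appeal to a dimension count is needed to see that the sum is direct), and gives a clean structural reason, via multiplicity-freeness of $\Res_{U_D}\sigma$, for why the decomposition splits even when $p=2$, which is exactly the point where the analogous induction in Proposition~\ref{Prop:modquaternion} can fail to split.
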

 \begin{proof}
  It can be seen in exactly the same way as in the preceding proposition that the induced representation is independent up to isomorphism of the choice of $F$-embedding.

  Let us fix an $F$-embedding $E_0\rightarrow D$. Mackey's decomposition gives
  \[\Res_{E_0^{\times}U_D^1}^{D^{\times}}\Ind_{E^{\times}U_D^1}^{D^{\times}} \nu ^{\ddagger} \cong \Ind_{F^{\times}U_D^1}^{E_0^{\times}U_D^1} (\nu ^{\ddagger} |_{F^{\times}U_D^1}).\]
  Frobenius reciprocity then induces non-zero $D^{\times}$-equivariant maps $\pi _{\xi} \rightarrow \Ind_{E^{\times}U_D^1}^{D^{\times}} \nu ^{\ddagger}$ for any mod $p$ characters $\xi$ of $E_0^{\times}$ extending $\nu |_{F^{\times}}$. Of all such extensions $\xi$, regular characters come in pairs and give rise to isomorphic two-dimensional irreducible representations.

  Let us take $\zeta_{E_0} \in \mu_{E_0}$ and $\zeta_F \in \mu_F$ as in Remark \ref{Rem:regchar}.\ref{unramnotation}. If a mod $p$ character $\xi$ of $E_0^{\times}$ is an extension of $\nu |_{F^{\times}}$, we have $\xi (\zeta _{E_0})^{q+1}=\nu (\zeta _F)$, whereas $\xi$ is irregular if and only if $\xi (\zeta _{E_0}^q)=\xi (\zeta _{E_0})$. It follows that $\nu |_{F^{\times}}$ admits an irregular extension to $E_0^{\times}$ if and only if $\nu (-1)=1$ (i.e. $\nu$ is irregular) and if there are any such extensions
  \begin{alignat*}{2}
   &\text{there are two} & \qquad & \text{ if } p\neq 2 ,\\
   &\text{and there is only one} & \qquad & \text{ if } p=2.
  \end{alignat*}
  For such an irregular character $\xi$ of $E_0^{\times}$, the character $\xi ^{\dagger}$ of $E_0^{\times}U_D^1$ extends to at most two mod $p$ characters of $D^{\times}$ (two if $p\neq 2$ and one if $p=2$). Among these characters, only the character extending $\nu ^{\ddagger}$ occurs in $\Ind_{E^{\times}U_D^1}^{D^{\times}} \nu ^{\ddagger}$.
  
  Summing up dimensions, we obtain the required decomposition of the induced representation $\Ind_{E^{\times}U_D^1}^{D^{\times}} \nu ^{\ddagger}$ and the other assertions now follow easily.
 \end{proof}
 




 \begin{Rem} \label{Rem:quaterniontameram}
  Let $E$, $E_0$, $\nu$ and $F$-embeddings $E\rightarrow D$, $E_0\rightarrow D$ be as above.
  \begin{enumerate}
   \item For a mod $p$ character $\xi$ of $E_0^{\times}$ to be an extension of $\nu |_{F^{\times}}$, it is necessary and sufficient that
         \[ \xi (\zeta _{E_0})^{q+1}=\nu (\zeta _F) \text{ and } \xi (\varpi _F)=\nu (\varpi _F) \]
         and an extension $\xi$ is regular if and only if
         \[ \xi (\zeta _{E_0})^2 \neq \nu (\zeta _F). \]
         with $\zeta_{E_0}, \zeta_{F} \text{ and } \varpi_F$ as in \emph{Remark }$\ref{Rem:regchar}.\ref{unramnotation}$.
   \item If $\nu$ is irregular, then any extension of $\nu$ to $D^{\times}$ is expressed as $(\varphi \circ \mathrm{Nrd})$ with some mod $p$ character $\varphi$ of $F^{\times}$. If $p\neq 2$, then $\varphi$ satisfies
         \[ \varphi (\zeta _F)^2=\nu (\zeta _F) \text{ and } \varphi (-\varpi _F)=\nu (\varpi _E).\]
         with $\zeta_F, \varpi_E \text{ and } \varpi_F$ as in \emph{Remark }$\ref{Rem:regchar}.\ref{tameramnotation}$, and if $p=2$, then $\varphi$ is the unique mod $p$ character such that $\varphi ^2=\nu |_{F^{\times}}$.
  \end{enumerate}
 \end{Rem}
 
\subsection{Two-dimensional semisimple mod $p$ representations of $\mathcal{W} _F$}
 In order to establish a mod $p$ correspondence, we classify two-dimensional semisimple smooth mod $p$ representations of $\mathcal{W} _F$ in this subsection. Very much analogous to what holds in $D^{\times}$, one-dimensional mod $p$ representations of $\mathcal{W} _F$ are parametrized by mod $p$ characters of $F^{\times}$ by simply composing the Artin reciprocity map $\mathbf{a} _F :\mathcal{W} _F\rightarrow F^{\times}$. We denote the set of isomorphism classes of two-dimensional irreducible smooth  representations of $\mathcal{W} _F$ over $\overline{\mathbb{F}}_p$ by $\mathcal{G} _2^0(F, \overline{\mathbb{F}}_p)$.
 
 We are going to take a look at two-dimensional induced mod $p$ representations of $\mathcal{W} _F$ and in particular parametrize elements of $\mathcal{G} _2^0(F, \overline{\mathbb{F}}_p)$. For the most part, the proofs are quite similar to Proposition \ref{Prop:modquaternion}. However, observe the differences in the assertions when $E/F$ is ramified.
 \begin{Prop} \label{Prop:modGalois}
  Let $E/F$ be a quadratic Galois extension of $F$ and $\xi$ a mod $p$ character of $E^{\times}$.
  \begin{enumerate}
   \item The induced representation $\rho _{\xi} =\Ind_{\mathcal{W} _E}^{\mathcal{W} _F} (\xi \circ \mathbf{a} _E )$ is irreducible if and only if $\xi$ is regular. If $\xi$ and $\xi ^{\prime}$ are regular mod $p$ characters of $E^{\times}$, then $\rho _{\xi} \cong \rho_ {\xi ^{\prime}}$ holds if and only if $\xi ^{\theta} =\xi ^{\prime}$ for some $\theta \in \mathrm{Gal} (E/F)$. Any two-dimensional irreducible mod $p$ representations of $\mathcal{W} _F$ arise this way \emph{with $E/F$ unramified}.
   
         In particular, this construction induces a bijection
         \begin{equation} \label{modGaloisparam}
         \mathbb{P} _2 (F, \overline{\mathbb{F}} _p ) \simeq \mathcal{G} _2^0(F, \overline{\mathbb{F}} _p).
         \end{equation}
   \item Let $\xi$ be irregular. Then $(\xi \circ \mathbf{a} _E)$ admits an extension to a mod $p$ character of $\mathcal{W} _F$. There are two such extensions $\lambda _1, \lambda _2$ if $p\neq 2$ and only one extension $\lambda$ if $p=2$. We have
         \begin{alignat*}{2}
        &\rho _{\xi} \cong \lambda _1 \oplus \lambda _2& \qquad &\text{ if } p\neq 2, \\
        &\rho _{\xi} \text{ is a non-split extension of $\lambda$ with itself}& &\text{ if } p=2.
         \end{alignat*}
  \end{enumerate}
 \end{Prop}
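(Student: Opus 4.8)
The plan is to run the same argument as in Proposition~\ref{Prop:modquaternion}, with $D^{\times}, U_D^1, U_D, \varpi_D$ replaced by $\mathcal{W}_F, \mathcal{P}_F, \mathcal{I}_F$ and a lift to $\mathcal{W}_F$ of the nontrivial $\theta\in\mathrm{Gal}(E/F)$ (note $[\mathcal{W}_F:\mathcal{W}_E]=2$). First I would record that, by compatibility of the Artin reciprocity maps with the Galois action, conjugation by such a lift carries $\xi\circ\mathbf{a}_E$ to $\xi^{\theta}\circ\mathbf{a}_E$, so that Mackey's decomposition gives $\Res_{\mathcal{W}_E}^{\mathcal{W}_F}\rho_{\xi}\cong(\xi\circ\mathbf{a}_E)\oplus(\xi^{\theta}\circ\mathbf{a}_E)$. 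Since $\rho_{\xi}$ is two-dimensional, it is reducible if and only if it has a one-dimensional subrepresentation, which by Frobenius reciprocity is equivalent to $\xi\circ\mathbf{a}_E$ extending to $\mathcal{W}_F$, i.e.\ (exactly as in the elementary calculation of Proposition~\ref{Prop:modquaternion}, since $\mathcal{W}_E$ has index two) to $\xi\circ\mathbf{a}_E$ being fixed under conjugation by $\theta$, i.e.\ to $\xi=\xi^{\theta}$; by the remark following the definition of admissible pairs (via Hilbert 90) this means precisely that $\xi$ is irregular. For the isomorphism criterion, Frobenius reciprocity together with the Mackey formula yields $\Hom_{\mathcal{W}_F}(\rho_{\xi'},\rho_{\xi})\cong\Hom_{\mathcal{W}_E}(\xi'\circ\mathbf{a}_E,(\xi\circ\mathbf{a}_E)\oplus(\xi^{\theta}\circ\mathbf{a}_E))$, from which, for regular $\xi$, one reads off that $\rho_{\xi}\cong\rho_{\xi'}$ holds iff $\xi'\in\{\xi,\xi^{\theta}\}$.

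For part~2, when $\xi$ is irregular the character $\xi\circ\mathbf{a}_E$ is $\mathrm{Gal}(E/F)$-stable, hence extends to $\mathcal{W}_F$, and the set of extensions is a torsor under the group of characters $\widehat{\mathrm{Gal}(E/F)}$ with values in $\overline{\mathbb{F}}_p^{\times}$, which has order $2$ if $p\neq2$ and order $1$ if $p=2$ (the existence of an extension and this count are verified exactly as in Proposition~\ref{Prop:modquaternion}). Taking $\xi=\xi'$ irregular in the $\Hom$-computation above shows that $\mathrm{End}_{\mathcal{W}_F}(\rho_{\xi})$ is two-dimensional. If $p\neq2$, the two distinct extensions $\lambda_1,\lambda_2$ both embed into $\rho_{\xi}$, forcing $\rho_{\xi}\cong\lambda_1\oplus\lambda_2$. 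If $p=2$, any one-dimensional subrepresentation of $\rho_{\xi}$ restricts on $\mathcal{W}_E$ to $\xi\circ\mathbf{a}_E$ and hence equals the unique extension $\lambda$, and dually for one-dimensional quotients; thus both Jordan--Hölder factors of $\rho_{\xi}$ are $\lambda$, and since $\mathrm{End}_{\mathcal{W}_F}(\lambda\oplus\lambda)$ is four-dimensional, $\rho_{\xi}$ cannot be semisimple and is therefore a non-split self-extension of $\lambda$.

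The one genuinely new point --- and the step I expect to be the main obstacle, since this is where the ramified/unramified dichotomy on the Galois side enters --- is the surjectivity assertion in part~1. Given a two-dimensional irreducible mod $p$ representation $(\rho,V)$ of $\mathcal{W}_F$, I would first use, by the same standard argument as in Proposition~\ref{Prop:modquaternion}, that $V^{\mathcal{P}_F}\neq0$; since $\mathcal{P}_F$ is normal and $V$ irreducible, $V^{\mathcal{P}_F}=V$, so $\rho$ factors through the tame quotient $\mathcal{W}_F/\mathcal{P}_F\cong(\mathcal{I}_F/\mathcal{P}_F)\rtimes\mathrm{Fr}^{\mathbb{Z}}$. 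As $\mathcal{I}_F/\mathcal{P}_F$ is pro-cyclic of pro-order prime to $p$, $V|_{\mathcal{I}_F}$ is semisimple, say $V|_{\mathcal{I}_F}=\eta_1\oplus\eta_2$ with eigenlines $L_1,L_2$. One checks $\eta_1\neq\eta_2$: otherwise $\mathcal{I}_F$ acts through a single $\mathrm{Fr}$-stable character, which (as $\mathrm{Fr}^{\mathbb{Z}}\cong\mathbb{Z}$ is free) extends to $\mathcal{W}_F$, and twisting by its inverse makes $\rho$ trivial on $\mathcal{I}_F$, hence factor through $\mathbb{Z}$ --- impossible for a two-dimensional irreducible. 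Then $\mathrm{Fr}$ must interchange $\eta_1,\eta_2$ (if it fixed $\eta_1$ it would fix both $L_1,L_2$, making them $\mathcal{W}_F$-stable), and comparing $\mathcal{I}_F$-eigenlines forces $\mathrm{Fr}^2\!\cdot\!\eta_1=\eta_1$ (no third eigencharacter is available, and $\mathrm{Fr}^2\!\cdot\!\eta_1=\eta_2$ would again produce a $\mathcal{W}_F$-stable line). Letting $E/F$ be the unramified quadratic extension, so $\mathcal{W}_E=\mathcal{I}_F\rtimes(\mathrm{Fr}^2)^{\mathbb{Z}}$, the line $L_1$ is stable under $\mathcal{I}_F=\mathcal{I}_E$ and under the Frobenius $\mathrm{Fr}^2$ of $E$, hence under $\mathcal{W}_E$, and so defines a smooth character $\tilde\eta_1=\xi\circ\mathbf{a}_E$ of $\mathcal{W}_E$ occurring in $\Res_{\mathcal{W}_E}V$; by Frobenius reciprocity the resulting nonzero map $\rho_{\xi}\to V$ is surjective by irreducibility and hence an isomorphism by dimension count, and $\xi$ is regular because $\rho_{\xi}\cong V$ is irreducible. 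Finally, since every mod $p$ admissible pair has $E$ unramified (condition~2 in the definition applies automatically, a mod $p$ character being trivial on $U_E^1$), the first part of Remark~\ref{Rem:regchar} identifies $\mathbb{P}_2(F,\overline{\mathbb{F}}_p)$ with regular mod $p$ characters of the unramified quadratic extension modulo Galois conjugacy, and the above shows $\xi\mapsto\rho_{\xi}$ induces the desired bijection~(\ref{modGaloisparam}).
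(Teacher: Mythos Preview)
Your proof is correct and follows essentially the same approach as the paper's own argument: both use Frobenius reciprocity for the irreducibility/extension criterion and the $\Hom$-computation, and both prove surjectivity by passing to the tame quotient, splitting $V|_{\mathcal{I}_F}$ into two distinct characters, and observing that Frobenius interchanges the eigenlines so that one eigenline is $\mathcal{W}_{E_0}$-stable for the unramified quadratic $E_0$. The only cosmetic differences are that the paper argues $\eta_1\neq\eta_2$ by directly exhibiting a Frobenius eigenline as a proper subrepresentation (rather than your twisting argument), and is slightly terser about the extension count and the $\mathrm{Fr}^2$-stability of $\eta_1$.
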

 \begin{proof}
  As the induced representation $\rho _{\xi}$ is two-dimensional, it is reducible if and only if it has a one-dimensional subrepresentation, which is then, by Frobenius reciprocity and the functoriality of the Artin reciprocity map, equivalent to $\xi$ factoring through $\mathrm{N} _{E/F} :E^{\times} \rightarrow F^{\times}$, i.e. being irregular.
  
  If $\rho _{\xi}$ does contain one-dimensional subrepresentation, $(\xi \circ \mathbf{a} _E)$ admits two extensions $\lambda _1, \lambda _2$ to a mod $p$ character of $\mathcal{W} _F$ if $p\neq2$ and only one extension $\lambda$ if $p=2$. In any case, Frobenius reciprocity shows that $\rho _{\xi}$ has every extension of $(\xi \circ \mathbf{a} _E)$ as a subrepresentation and a quotient representation.
  
  If $p\neq2$, the two-dimensional representation $\rho _{\xi}$ contains two distinct characters $\lambda _1, \lambda _2$ and hence is isomorphic to the direct sum $\lambda _1 \oplus \lambda _2$.
  
  Frobenius reciprocity yields
  \[ \Hom _{\mathcal{W} _F} (\rho _{\xi ^{\prime}}, \rho _{\xi}) \cong \Hom _{\mathcal{W} _E} (\xi ^{\prime} \circ \mathbf{a} _E, (\xi \circ \mathbf{a} _E) \oplus (\xi ^{\theta} \circ \mathbf{a} _E)). \]
  Taking $\xi$ and $\xi ^{\prime}$ to be regular, we obtain the required condition for $\rho _{\xi} \cong \rho _{\xi ^{\prime}}$ when $\xi$ is regular. Taking $\xi =\xi ^{\prime}$ to be irregular, we see that $\mathrm{End} _{\mathcal{W} _F} (\rho _{\xi})$ is two-dimensional and hence $\rho _{\xi}$ is a non-split extension if $p=2$ and $\xi$ is irregular.
  
  Now it only remains to show that the induced map (\ref{modGaloisparam}) is surjective. Let $(\rho, V)$ be a two-dimensional irreducible mod $p$ representation of $\mathcal{W} _F$ and let $E_0/F$ be an unramified quadratic extension. As the wild inertia subgroup $\mathcal{P} _F$ is a normal pro-$p$-subgroup of $\mathcal{W} _F$, it acts trivially on $V$.\footnote{Thus any irreducible mod $p$ representation of $\mathcal{W} _F$ is inflated from a representation of the semidirect product of the tame inertia group $\mathcal{I} _F/\mathcal{P} _F$ and an infinite cyclic group generated by a Frobenius element. Irreducible representations of a finite group which is a semidirect product with the normal subgroup abelian are well-understood. See, for instance, \cite[8.2]{Se1}. The argument here can be considered as a modification.} Since the tame inertia group $\mathcal{I} _F/\mathcal{P} _F$ is abelian, profinite and of pro-order prime to $p$, the restricted representation $\rho |_{\mathcal{I} _F}$ is a direct sum of two mod $p$ characters. These two characters are distinct. Indeed, if they were identical, $\mathcal{I} _F$ would act on $V$ as scalar operators and
an eigenspace of the action of a Frobenius element would give a proper subrepresentation, which is a contradiction.

 Let $V=V_1\oplus V_2$ be the decomposition of $\rho |_{\mathcal{I} _F}$ into irreducible representations. Let us take a Frobenius element $\mathrm{Fr} \in \mathcal{W} _F$. The subgroup $(\mathrm{Fr}) ^{\mathbb{Z}}$ permutes the set $\{ V_1, V_2\}$ transitively. The stabilizer of $V_1$ with respect to this action is $(\mathrm{Fr}) ^{2\mathbb{Z}}$ and $V_1$ defines a mod $p$ character of $\mathcal{I} _F (\mathrm{Fr}) ^{2\mathbb{Z}}=\mathcal{W} _{E_0}$. Expressing this character as $(\xi \circ \mathbf{a} _{E_0})$ with a suitable character $\xi$ of $E_0^{\times}$, we obtain $\rho \cong \rho _{\xi}$. 
 \end{proof}
 \begin{Rem} \label{Rem:modGalois}
  Let $E_0/F$ be an unramified quadratic extension and $E/F$ a totally ramified quadratic extension. Let us take a uniformizer $\varpi _E \in E^{\times}$ so that $\varpi_E^2 \in F^{\times}$.
  \begin{enumerate}
   \item If $\nu$ is a regular mod $p$ character of $E^{\times}$ (in which case $p\neq 2$), then $\rho _{\nu}$ is isomorphic to $\rho _{\xi}$ for a mod $p$ character $\xi$ of $E_0^{\times}$ such that
         \[ \xi (\zeta _{E_0})^2=\nu (\zeta _F) \text{ and } \xi (\varpi _F)=\nu (-1)^{(q+1)/2} \nu (\varpi _F).\]
           with $\zeta _{E_0}, \zeta _F \text{ and } \varpi_F$ as in \emph{Remark }$\ref{Rem:regchar}.\ref{unramnotation}$.
   \item If $\xi$ is an irregular mod $p$ character of $E_0^{\times}$, any extension of $(\xi \circ \mathbf{a} _{E_0})$ to $\mathcal{W} _F$ is expressed as $(\varphi \circ \mathbf{a} _F)$ with some mod $p$ character $\varphi$ of $F^{\times}$ such that
         \[ \varphi (\zeta _F) =\xi (\zeta _{E_0}) \text{ and } \varphi (\varpi _F)^2=\xi (\varpi _F). \]
           with $\zeta _{E_0}, \zeta _F \text{ and } \varpi _F$ as above.
   \item If $\nu$ is an irregular mod $p$ character of $E^{\times}$, any extension of $(\nu \circ \mathbf{a} _{E})$ to $\mathcal{W} _F$ is expressed as $(\varphi \circ \mathbf{a} _F)$ with some mod $p$ character $\varphi$ of $F^{\times}$. If $p\neq 2$, then $\varphi$ satisfies
         \[ \varphi (\zeta _F)^2=\nu (\zeta _F) \text{ and } \varphi (-\varpi _F)=\nu (\varpi _E). \]
           with $\zeta _F, \varpi _E \text{ and } \varpi _F$ as in \emph{Remark }$\ref{Rem:regchar}.\ref{tameramnotation}$, and if $p=2$, then $\varphi$ is the unique mod $p$ character such that $\varpi ^2=\nu |_{F^{\times}}$.
  \end{enumerate}
 \end{Rem}
 
\subsection{Mod $p$ correspondence} \label{subsection:modpcor}
 By Propositions \ref{Prop:modGalois} and \ref{Prop:modquaternion}, two-dimensional irreducible mod $p$ representations of $\mathcal{W} _F$ and irreducible mod $p$ representations of $D^{\times}$ of dimension greater than one are parametrized by the same set $\mathbb{P} _2 (F, \overline{\mathbb{F}} _p )$ and therefore naturally correspond to each other. However, motivated by the correspondences in characteristic zero (cf. \ref{subsec:corres}), we adjust the correspondence slightly by composing a certain permutation of $\mathbb{P} _2 (F, \overline{\mathbb{F}} _p )$.
 \begin{Def}
  Let $E/F$ be an unramified quadratic extension of $F$.
  
  We define $\delta$ to be the unramified mod $p$ character of $E^{\times}$ $($i.e. $\delta$ is trivial on $U_E)$ sending any uniformizer to $-1$. $($In particular, $\delta$ is trivial if $p=2.)$
 \end{Def}
 It can immediately be seen that the association $\xi \mapsto \delta \otimes \xi$ induces a bijection $\mathbb{P} _2 (F, \overline{\mathbb{F}} _p ) \rightarrow \mathbb{P} _2 (F, \overline{\mathbb{F}} _p )$.
 \begin{Def}
 We call the bijection between $\mathcal{G} _2^0(F, \overline{\mathbb{F}} _p)$ and $\mathcal{A} _1^0(D, \overline{\mathbb{F}} _p )$ induced by the association $\rho _{\xi} \mapsto \pi _{\delta \otimes \xi}$ \emph{the mod $p$ correspondence}.
 \end{Def}
 \begin{Rem}
 As noted in \ref{section:intro}, this correspondence has already been established by Vign\'eras $($cf. \emph{\cite{Vi2}}$)$.
 \end{Rem}

\section{Reduction modulo $p$ of representations of $D^{\times}$ and of $\mathcal{W} _F$} \label{section:reduction}
\subsection{Review of irreducible admissible representations of $D^\times$}

Recall that we fixed an isomorphism $\overline{\mathbb{Q}} _p \cong \mathbb{C}$ in \ref{no,te}. As the topology of the coefficient field is irrelevant to the notion of admissible representations, each irreducible admissible representation of $D^\times$ over $\mathbb{C}$ corresponds to that over $\overline{\mathbb{Q}} _p$ through this isomorphism.

Here we first give a review of the classification of irreducible admissible representations of $D^\times$ over $\mathbb{C}$, restricting ourselves to the \emph{tame} case in the sense below. (Note that an irreducible admissible representation of $D^\times$ is automatically tamely ramified unless $p=2$ or it is a character.) These representations are parametrized by admissible pairs with values in $\mathbb{C}$. All the proofs can essentially be found in [BH] except that they give a complete classification of irreducible admissible representations of both $GL_2(F)$ and $D^\times$ while the parametrization of tame (cuspidal) representations are described only in the case of $GL_2(F)$.
Then we briefly recall what we need about the remaining class of irreducible admissible representations. Although our treatment is incomplete in view of the theory of types it suffices for our purposes.

Irreducible admissible representations of $D^\times$ consist of one-dimensional representations, which factor through $\mathrm{Nrd}: D^\times \rightarrow F^\times$, and $n$-dimensional representations with $1<n< \infty$. We denote the set of isomorphism classes of the latter representations by $\mathcal{A}_1^0(D)$. Elements in $\mathcal{A}_1^0(D)$ correspond to irreducible cuspidal representations of $GL_2(F)$ via LJLC and then in turn to irreducible two-dimensional representations of $\mathcal{W} _F$ via LLC.

\begin{Def}
Let $\Pi$ represent an element in $\mathcal{A}_1^0(D)$.

The representation $\Pi$ is said to be \emph{unramified} if there exits a non-trivial unramified character $\Phi$ of $F^\times$ such that $(\Phi \circ \mathrm{Nrd}) \otimes \Pi \cong \Pi$, and \emph{totally ramified} otherwise. Also, it is said to be \emph{tamely ramified} if $p \neq 2$ or it is unramified.
\end{Def}
We denote the set of isomorphism classes of unramified representations of $D^\times$ by $\mathcal{A}_1^{\emph{nr}}(D)$.

We are now going to construct a map $\mathbb{P}_2(F) \rightarrow \mathcal{A}_1^0(D)$.

Let us begin with an admissible pair $(E/F, \chi)$ \emph{of level zero}. Then $E/F$ is unramified by the definition of admissible pairs. We take an $F$-embedding $E \rightarrow D$. We extend $\chi$ to a character $\Lambda$ of $J=E^{\times} U_D^1(=F^{\times} U_D)$ by setting $\Lambda |_{U_D^1} =1$. Then we set
\[\Pi_{\chi}=\Ind_{J}^{D^{\times}}\chi.\]
It is an irreducible admissible representation of $D^{\times}$.

Now let $(E/F, \chi)$ be a \emph{minimal} pair of positive level $m>0$. We fix a character $\psi$ of $F$ of level one. Let us set $\psi_D = \psi \circ \mathrm{Trd}$, $\psi_E = \psi \circ \tr_{E/F}$ and $n=2m/{e(E|F)}$. The characters $\psi_D$ and $\psi_E$ are both of level one. We take an $F$-embedding $E \rightarrow D$ and identify $E$ with an $F$-subalgebra of $D$.
There exists an $\alpha \in \mathfrak{p}_{E}^{-m}$ such that $\chi (1+x)= \psi_{E} (\alpha x)$ for all $x \in \mathfrak{p}_E^{[m/2]+1}$. (Here, $[l]$ denotes the greatest integer not greater than $l$.) Then $n=-v_D(\alpha)$. Similarly, we define a character $\psi_{\alpha}$ of $U_D^{[n/2]+1}$ trivial on $U_D^{n+1}$ by
\[\psi_{\alpha}(1+x)= \psi _{D} (\alpha x)\]
for all $x \in {\mathfrak{q}}^{[n/2]+1}$. We would like to define an element in the set $C(\psi_{\alpha})$ of isomorphism classes of irreducible representations $\Lambda$ of the group $J_{\alpha} =E^{\times}U_D^{[{(n+1)}/2]}$ such that ${\Lambda}|_{U_D^{[n/2]+1}}$ is a multiple of $\psi_{\alpha}$.

First we treat the case where $n$ is odd. Then $E/F$ is totally ramified and $J_{\alpha} =E^{\times}U_D^{[n/2]+1}$. In this case we define a character $\Lambda$ of $J_{\alpha}$ by
\[{\Lambda}|_{U_D^{[n/2]+1}} =\psi_{\alpha}, \ {\Lambda}|_{E^{\times}}= \chi .\]
Then $\Lambda$ defines a class in $C(\psi_{\alpha})$.

Next we move on to the case where $n\equiv 0\pmod 4$. This case is almost the same as the previous one, except that the extension $E/F$ is unramified. We have $J_{\alpha}=E^{\times}U_D^{[n/2]+1}$ and define a character $\Lambda$ of $J_{\alpha}$ by
\[{\Lambda}|_{U_D^{[n/2]+1}} =\psi_{\alpha}, \ {\Lambda}|_{E^{\times}}= \chi .\]
Again, $\Lambda$ defines a class in $C(\psi_{\alpha})$.

Finally, if $n\equiv 2\pmod 4$, $E^{\times}U_D^{[n/2]+1}$ has index $q^2$ in $J_{\alpha}$. Let us set $H_{\alpha}^1 =U_E^1 U_D^{[n/2]+1} \subsetneq J_{\alpha}^1 =U_E^1 U_D^{[{(n+1)}/2]}$. Defining
\[\theta (ux)=\chi (u)\psi_{\alpha} (x)\]
for $u\in U_E^1$ and $x\in U_D^{[n/2]+1}$, we obtain a character $\theta$ of $H_{\alpha}^1$.
\begin{Prop} \label{Prop:Heisenberg}
 \begin{enumerate}
  \item Under the conditions above there exists an irreducible representation $\eta _{\theta}$ of $J_{\alpha}^1$ containing $\theta$. It is unique up to isomorphism and $q$-dimensional.
  \item Furthermore, there exists an irreducible representation $\Lambda$ of $J_{\alpha}$ such that
  \begin{enumerate}
   \item $\Lambda |_{J_{\alpha}^1} \cong \eta _{\theta}$
   \item $\Lambda |_{F^{\times}} \cong (\chi |_{F^{\times}})^{\oplus q}$
   \item $\tr \Lambda (\zeta)=-\chi (\zeta) \ \ \ (\zeta \in \mu_E \setminus \mu_F)$.
  \end{enumerate}
  It is unique up to isomorphism.
 \end{enumerate}
\end{Prop}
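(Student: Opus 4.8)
The plan is to reproduce for $D^{\times}$ the construction of Heisenberg representations and of their distinguished extensions carried out for $\mathrm{GL}_2(F)$ in \cite{BH}; none of the ingredients is new, so I only sketch the skeleton. For part 1, note first that since $n\equiv 2\pmod 4$ we have $[n/2]+1=n/2+1$ and $[(n+1)/2]=n/2$, so $H_{\alpha}^1=U_E^1U_D^{n/2+1}\subsetneq J_{\alpha}^1=U_E^1U_D^{n/2}$; as $E/F$ is unramified one checks $U_E^1\cap U_D^{n/2}\subseteq U_D^{n/2+1}$, whence $J_{\alpha}^1/H_{\alpha}^1\cong U_D^{n/2}/U_D^{n/2+1}\cong k_D$, of order $q^2$. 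The formula $[1+x,1+y]\equiv 1+(xy-yx)\pmod{U_D^{n+1}}$ gives $[J_{\alpha}^1,J_{\alpha}^1]\subseteq H_{\alpha}^1$, and $\theta$ is $J_{\alpha}^1$-invariant: conjugation by $E^{\times}$ fixes $\theta$ since $\alpha\in E$ is central in $E$, while conjugation by an element of $U_D^{n/2}$ alters $\alpha$ by an element of $\mathfrak{q}^{-n/2}$, which after multiplication by $\mathfrak{q}^{n/2+1}$ lands in $\Ker\psi_D$. Hence $(x,y)\mapsto\theta([\widetilde x,\widetilde y])$ descends to an alternating pairing $h_{\theta}$ on $J_{\alpha}^1/H_{\alpha}^1$. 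Fixing a presentation $D=E\oplus E\varpi_D$ with $\varpi_D^2=\varpi_F$ and $\varpi_D x\varpi_D^{-1}=\sigma(x)$ for $x\in E$, where $\sigma$ generates $\mathrm{Gal}(E/F)$, and using that $n/2$ is odd, a short computation gives
\[ h_{\theta}(z_1,z_2)=\psi\!\left(\tr_{k_E/k_F}\!\left(\bar{\alpha}(\bar{z}_1^{\sigma}\bar{z}_2-\bar{z}_1\bar{z}_2^{\sigma})\right)\right), \]
where $\bar{\alpha}\in k_E^{\times}$ is the reduction of $\alpha\varpi_F^{n/2}\in U_E$; by non-degeneracy of the trace form, $h_{\theta}$ is non-degenerate exactly when $\bar{\alpha}\notin k_F$, i.e.\ exactly when $(E/F,\chi)$ is minimal. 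Granting this, $(J_{\alpha}^1,H_{\alpha}^1,\theta)$ is a Heisenberg datum, and the Stone--von Neumann theorem (cf.\ \cite[\S15]{BH}) yields the unique irreducible $\eta_{\theta}$ lying over $\theta$, with $\eta_{\theta}|_{H_{\alpha}^1}\cong\theta^{\oplus q}$ and $\dim\eta_{\theta}=(J_{\alpha}^1:H_{\alpha}^1)^{1/2}=q$.

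For part 2 I would build $\Lambda$ in two steps. As $\alpha\in E$ and $E^{\times}$ is abelian, $\theta$ is stable under conjugation by $E^{\times}$, hence so is $\eta_{\theta}$ by the uniqueness in part 1; since $U_F^1\subseteq H_{\alpha}^1$ acts on $\eta_{\theta}$ through $\theta|_{U_F^1}=\chi|_{U_F^1}$, the representation $\eta_{\theta}$ extends to an irreducible representation of $F^{\times}J_{\alpha}^1$ on which $F^{\times}$ acts by the scalar $\chi$. Now $J_{\alpha}=E^{\times}J_{\alpha}^1$ and $J_{\alpha}/(F^{\times}J_{\alpha}^1)\cong\mu_E/\mu_F$ is cyclic of order $q+1$, so extending across this cyclic quotient is unobstructed (the relevant degree-two cohomology of a finite cyclic group with values in the divisible group $\overline{\mathbb{Q}}_p^{\times}$ vanishes), the set of extensions to $J_{\alpha}$ being a torsor under the characters of $\mu_E/\mu_F$. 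Each of these $q+1$ extensions restricts to $\eta_{\theta}$ on $J_{\alpha}^1$ and to $\chi^{\oplus q}$ on $F^{\times}$, so (a) and (b) hold for all of them, and it remains to single out the one satisfying $\tr\Lambda(\zeta)=-\chi(\zeta)$ for $\zeta\in\mu_E\setminus\mu_F$. Uniqueness is immediate: two candidates differ by a character $\mu$ of $J_{\alpha}/J_{\alpha}^1$ which, by (b), is trivial on $F^{\times}$ and hence factors through $\mu_E/\mu_F$, and $-\mu(\zeta)\chi(\zeta)=-\chi(\zeta)$ for $\zeta\in\mu_E\setminus\mu_F$ forces $\mu=1$.

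Existence is the one genuinely non-formal point. The conjugation action of $\mu_E$ on $J_{\alpha}^1/H_{\alpha}^1\cong k_E$ is multiplication by $\bar{\zeta}^{\,q-1}$, so $\mu_E$ maps onto the anisotropic torus $T=\Ker(\mathrm{N}_{k_E/k_F})$ inside $\mathrm{Sp}(J_{\alpha}^1/H_{\alpha}^1)\cong\mathrm{SL}_2(k_F)$, with $\zeta\in\mu_E\setminus\mu_F$ acting as a regular element. Realizing the extension of $\eta_{\theta}$ to $\mu_E J_{\alpha}^1$ through the oscillator (Weil) representation of $\mathrm{SL}_2(k_F)$ attached to $\psi$, twisted so that $\mu_F$ acts by $\chi$ — equivalently, through the explicit realization of $\Lambda$ in \cite{BH} — one finds that $\tr\Lambda_0(\zeta)$ for such a base extension $\Lambda_0$ equals a character $\lambda$ of $\mu_E$ (with $\lambda|_{\mu_F}=\chi|_{\mu_F}$) times the value of the oscillator character at $\bar{\zeta}^{\,q-1}$; the latter, a quadratic Gauss sum, is $-1$ times a character of $T$ on the non-identity elements. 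Hence $\tr\Lambda_0(\zeta)$ equals $-\chi(\zeta)$ times a character of $\mu_E/\mu_F$, and twisting $\Lambda_0$ by the inverse of that character (trivial on $F^{\times}$, so leaving (a) and (b) intact) produces a $\Lambda$ with $\tr\Lambda(\zeta)=-\chi(\zeta)$. This Gauss-sum evaluation of the Heisenberg/Weil character along the non-split torus — which is also where the sign distinguishing $D^{\times}$ from $\mathrm{GL}_2(F)$ in the explicit Jacquet--Langlands correspondence enters (cf.\ \cite{BH}) — is the crux; everything else is Stone--von Neumann together with routine bookkeeping.
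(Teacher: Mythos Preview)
Your sketch is essentially correct and follows the standard Heisenberg/Stone--von Neumann argument from \cite{BH}. Note, however, that the paper itself does not prove this proposition: it is stated as part of the review of known results, covered by the blanket remark that ``all the proofs can essentially be found in [BH],'' and no independent argument is given. Your outline --- verifying that $(J_{\alpha}^1,H_{\alpha}^1,\theta)$ is a non-degenerate Heisenberg datum, invoking Stone--von Neumann for part~1, extending across the cyclic quotient $\mu_E/\mu_F$ and pinning down the correct twist via the oscillator character for part~2 --- is precisely the shape of the argument in \cite[\S15--16, \S56]{BH}, so there is nothing to compare against here beyond saying that you have supplied what the paper chose to cite.
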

The obtained representation $\Lambda$ defines a class in $C(\psi _{\alpha})$.

Thus we now have an irreducible representation $\Lambda$ of $J_{\alpha}$ in all cases. We set
\[\Pi_{\chi}=\Ind_{J_{\alpha}}^{D^{\times}} \Lambda . \]

Now let $(E/F, \chi)$ be a (not necessarily minimal) admissible pair. There exist a minimal pair $(E/F, \chi^{\prime})$ and a character $\Phi$ of $F^{\times}$ such that $\chi =(\Phi \circ \mathrm{N}_{E/F}) \otimes \chi ^{\prime}$. We define
\[\Pi _{\chi}= (\Phi \circ \mathrm{Nrd}) \otimes \Pi _{\chi^{\prime}}.\]

In the course of the definition of $\Pi _{\chi}$, we made many choices. Nonetheless, we have
\begin{Prop}
 The isomorphism class of $\Pi _{\chi}$ only depends on the isomorphism class of an admissible pair $(E/F, \chi)$. Moreover, the representations $\Pi _{\chi}$ constructed above are irreducible and admissible. In other words, the construction above indeed induces a well-defined map $\mathbb{P}_2(F) \rightarrow \mathcal{A}_1^0(D)$.
\end{Prop}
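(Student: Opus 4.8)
The proof follows the template worked out for $\mathrm{GL}_2(F)$ in \cite{BH}, with $D^{\times}$ and its filtration subgroups in the roles of $\mathrm{GL}_2(F)$ and its subgroups compact modulo centre; I indicate the steps and point out where the genuine content sits. First I would dispose of the non-minimal case by checking that the two clauses of the definition of $\Pi_{\chi}$ are consistent, i.e.\ that for $(E/F,\chi)$ minimal of level $m$ and $\Phi$ a character of $F^{\times}$ such that $(E/F,(\Phi\circ\mathrm{N}_{E/F})\otimes\chi)$ is again minimal one has $\Pi_{(\Phi\circ\mathrm{N}_{E/F})\otimes\chi}\cong(\Phi\circ\mathrm{Nrd})\otimes\Pi_{\chi}$. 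This is read off the construction: twisting $\chi$ by $\Phi\circ\mathrm{N}_{E/F}$ leaves $n=2m/e(E|F)$ and the group $J_{\alpha}$ unchanged and twists $\Lambda$, hence $\Pi_{\chi}=\Ind_{J_{\alpha}}^{D^{\times}}\Lambda$, by $\Phi\circ\mathrm{Nrd}$; the one delicate point is the comparison of the two leading terms $\alpha$, which goes exactly as in \cite{BH}. Granting this, all ways of writing $\chi=(\Phi\circ\mathrm{N}_{E/F})\otimes\chi'$ with $(E/F,\chi')$ minimal give the same $\Pi_{\chi}$, and the problem reduces to showing that $\Pi_{\chi}$, for $(E/F,\chi)$ minimal, is independent of the remaining choices and depends only on the $F$-isomorphism class of $(E/F,\chi)$.

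Independence of the chosen $F$-embedding $E\to D$ is immediate from the Skolem--Noether theorem: two embeddings differ by an inner automorphism of $D^{\times}$, which carries $(J_{\alpha},\Lambda)$ to conjugate data and $\Pi_{\chi}$ to an isomorphic representation --- precisely as in assertion~1 of Proposition~\ref{Prop:modquaternion}. Applying this with the embedding precomposed by a nontrivial element of $\mathrm{Gal}(E/F)$ gives $\Pi_{\chi}\cong\Pi_{\chi^{\theta}}$, so $\Pi_{\chi}$ depends only on the $F$-isomorphism class. The additive character $\psi$ and the element $\alpha$ are handled together. Replacing $\psi$ by $\psi'(x)=\psi(cx)$ with $c\in\mathfrak{o}_F^{\times}$ lets one take $\alpha'=c^{-1}\alpha$, for which $\psi_{\alpha'}=\psi_{\alpha}$; and for fixed $\psi$, $\alpha$ is determined only modulo $\mathfrak{p}_E^{-[m/2]}$, and changing it by such an element leaves $\psi_{\alpha}$ unchanged on $U_D^{[n/2]+1}$. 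In either case $\theta$, the Heisenberg representation $\eta_{\theta}$, and the class of $\Lambda$ --- determined by the explicit formula when $n$ is odd or $n\equiv0\pmod{4}$, and by the uniqueness clauses of Proposition~\ref{Prop:Heisenberg} when $n\equiv2\pmod{4}$ --- are unchanged, hence so is $\Pi_{\chi}$.

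Admissibility is cheap: $J$, respectively $J_{\alpha}$, is an open subgroup of $D^{\times}$ containing $F^{\times}$, hence of finite index (the coset space is discrete, and compact since $D^{\times}/F^{\times}$ is), so $\Pi_{\chi}$ is finite-dimensional and a fortiori admissible. Irreducibility is the substantive assertion, and I would derive it from Mackey's irreducibility criterion applied to $\Pi_{\chi}=\Ind_{J_{\alpha}}^{D^{\times}}\Lambda$ (with $\Lambda=\chi$ and $J_{\alpha}=J$ in the level-zero case): since $\Lambda$ is irreducible it suffices to show $\Hom_{J_{\alpha}\cap gJ_{\alpha}g^{-1}}(\Lambda,{}^g\Lambda)=0$ for all $g\in D^{\times}\setminus J_{\alpha}$. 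In the level-zero case this is exactly the computation carried out, in the formally identical mod $p$ setting, in the proof of Proposition~\ref{Prop:modquaternion}: $[D^{\times}:F^{\times}U_D]=2$, and a nonzero intertwiner for $g=\varpi_D$ would force $\chi=\chi^{\theta}$, contradicting condition~1 in the definition of an admissible pair. In positive level the same strategy applies: one computes the intertwining of the simple stratum underlying $(J_{\alpha},\Lambda)$ and finds that the intertwining of $\Lambda$ collapses to $J_{\alpha}$, once more because $\chi\neq\chi^{\theta}$ --- and this is where the data $\Lambda|_{E^{\times}}=\chi$ and $\tr\Lambda(\zeta)=-\chi(\zeta)$ from Proposition~\ref{Prop:Heisenberg} are used.

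The step I expect to be the main obstacle is this last intertwining computation in positive level, most acutely when $n\equiv2\pmod{4}$: there $\Lambda$ is genuinely $q$-dimensional, assembled from the Heisenberg representation $\eta_{\theta}$ of $J_{\alpha}^1$, and bounding $\Hom_{J_{\alpha}\cap gJ_{\alpha}g^{-1}}(\Lambda,{}^g\Lambda)$ for every $g$ forces one to use the structure of simple strata in $D$ together with the behaviour of $\eta_{\theta}$ under conjugation --- precisely the existence-and-uniqueness content of Proposition~\ref{Prop:Heisenberg}. Everything else --- the reduction to minimal pairs, independence of the embedding, and the insensitivity to $\psi$ and to $\alpha$ --- is bookkeeping resting on the Skolem--Noether theorem, Frobenius reciprocity, and the uniqueness statements already in hand.
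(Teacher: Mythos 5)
The paper offers no proof of this proposition at all --- it defers entirely to [BH] --- and your sketch reproduces exactly the [BH] template (reduction to minimal pairs, Skolem--Noether for the embedding, insensitivity to $\psi$ and to $\alpha$ modulo $\mathfrak{p}_E^{-[m/2]}$, finite index of $J_{\alpha}$ for admissibility, Mackey/intertwining for irreducibility), so it matches the paper's approach and is structurally sound, with the caveat that the decisive positive-level intertwining computation is, as you acknowledge, pointed at rather than carried out. One small correction of attribution: in positive level the collapse $I_{D^{\times}}(\Lambda)\subseteq I_{D^{\times}}(\psi_{\alpha})=J_{\alpha}$ rests on $\alpha$ being a \emph{minimal element} with $F[\alpha]=E$ --- which is what the minimality of the pair (together with condition 2 of the definition, in the ramified case) guarantees --- rather than directly on $\chi\neq\chi^{\theta}$, which is the operative hypothesis only in the level-zero case.
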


\begin{Thm} \label{Thm:quaternion}
 Sending the isomorphism class $[(E/F, \chi)]$ of an admissible pair $(E/F, \chi)$ to the isomorphism class $[\Pi _{\chi} ]$ of an irreducible admissible representation $\Pi_{\chi}$ we obtain a bijection $\Pi$ between the set $\mathbb{P}_2(F)$ of isomorphism classes of admissible pairs and the
set of isomorphism classes of tamely ramified representations of $D^{\times}$:
\[\mathbb{P}_2(F) \simeq \mathcal{A}_1^0(D), \ \ [(E/F, \chi)] \mapsto [\Pi _{\chi} ] \qquad \text{if} \ p \neq 2\]
\[\mathbb{P}_2(F) \simeq \mathcal{A}_1^{\emph{nr}}(D), \ \ [(E/F, \chi)] \mapsto [\Pi _{\chi} ] \qquad \text{if} \ p=2.\]
If $(E/F, \chi)$ is an admissible pair, we have
 \begin{enumerate}
  \item the level of $\Pi_{\chi}$ is $n= {2m}/e(E|F)$,
  \item the central character of $\Pi_{\chi}$ is $\chi |_{F^{\times}}$,
  \item $\Pi_{(\Phi \circ \mathrm{N}_{E/F}) \otimes \chi} \cong (\Phi \circ \mathrm{Nrd}) \otimes \Pi_{\chi}$ \ \  if $\Phi$ is a character of $F^{\times}$,
  \item and $\Pi _{\chi}$ is unramified if and only if $E/F$ is unramified.
 \end{enumerate}
\end{Thm}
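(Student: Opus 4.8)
The plan is to deduce the bijectivity and the four properties by first reducing everything to the case of a minimal pair and then carrying out, for $D^{\times}$, the analogue of Bushnell--Henniart's treatment of tame cuspidal representations of $\mathrm{GL}_2(F)$ in \cite{BH}. Property (3) is immediate from the definition of $\Pi_\chi$ on non-minimal pairs: writing $\chi=(\Phi\circ\mathrm{N}_{E/F})\otimes\chi'$ with $(E/F,\chi')$ minimal one has $\Pi_\chi=(\Phi\circ\mathrm{Nrd})\otimes\Pi_{\chi'}$ by definition, and replacing $\Phi$ by $\Psi\Phi$ gives $\Pi_{(\Psi\circ\mathrm{N}_{E/F})\otimes\chi}\cong(\Psi\circ\mathrm{Nrd})\otimes\Pi_\chi$, the only input being the well-definedness recorded in the preceding proposition. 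Since $\mathrm{N}_{E/F}$ and $\mathrm{Nrd}$ both restrict to the squaring map on $F^{\times}$, and since twisting by $\Phi\circ\mathrm{N}_{E/F}$ alters neither $e(E|F)$ nor — by the very definition of minimality — the level of a minimal pair of positive level, properties (1) and (2) reduce by (3) to the minimal case, where they are read off the construction: $\Pi_\chi=\Ind_{J_\alpha}^{D^{\times}}\Lambda$ (or $\Ind_{J}^{D^{\times}}\chi$ at level zero), $U_D^{n+1}$ is normal in $D^{\times}$ and contained in $\Ker\Lambda$ while $\Lambda|_{U_D^{n}}\neq1$, which gives (1); and $\Lambda|_{F^{\times}}$ is a multiple of $\chi|_{F^{\times}}$ — trivially when $n$ is odd or $n\equiv0\pmod4$, and by Proposition \ref{Prop:Heisenberg}(2)(b) when $n\equiv2\pmod4$ — so the central character of the induced representation is $\chi|_{F^{\times}}$, which gives (2). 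Finally, one half of (4) is elementary: if $E/F$ is unramified and $\Phi$ is the non-trivial unramified quadratic character of $F^{\times}$, then $\Phi\circ\mathrm{N}_{E/F}$ is trivial (the norm is surjective onto $U_F$ and sends a uniformizer of $F$ to its square), so $\chi=(\Phi\circ\mathrm{N}_{E/F})\otimes\chi$ and hence $\Pi_\chi\cong(\Phi\circ\mathrm{Nrd})\otimes\Pi_\chi$ is unramified.

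The substantive point is bijectivity, which I would obtain by transposing the $\mathrm{GL}_2$-arguments of \cite{BH} to $D^{\times}$; the transposition is in fact easier, because $D^{\times}$ is compact modulo its centre (so all representations are finite-dimensional and smooth induction is compact induction) and because the centralizer in $D$ of any element outside $F$ is a quadratic field extension of $F$, so the strata that occur automatically produce quadratic pairs. For surjectivity, let $\Pi$ be tamely ramified of dimension $>1$. If $\Pi$ has level zero, its non-zero $U_D^1$-fixed subspace carries a character of $U_D/U_D^1\cong k_D^{\times}\cong k_E^{\times}$ for the unramified $E/F$; this character is regular (otherwise, exactly as in the proof of Proposition \ref{Prop:modquaternion}, $\Pi$ would be one-dimensional), it extends together with the central character to a character $\chi$ of $J=F^{\times}U_D$, and Frobenius reciprocity produces a non-zero — hence surjective, hence bijective — $D^{\times}$-map $\Ind_{J}^{D^{\times}}\chi\to\Pi$, so $\Pi\cong\Pi_\chi$. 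If $\Pi$ has level $n>0$, it contains a character of some $U_D^{[n/2]+1}$ of the form $\psi_\alpha$ with $\alpha\in\mathfrak{q}^{-n}$, which one may take outside $F$, so that $E=F(\alpha)$ is a quadratic extension with $n=-v_D(\alpha)=2m/e(E|F)$; after replacing $\alpha$ by a suitable $E^{\times}$-multiple one arranges $(E/F,\chi)$ minimal, reconstructs an irreducible $\Lambda$ of $J_\alpha$ occurring in $\Res\Pi$ and containing $\psi_\alpha$ (invoking Proposition \ref{Prop:Heisenberg} for existence and uniqueness when $n\equiv2\pmod4$), checks it has precisely the shape prescribed by the construction for the pair $(E/F,\chi)$ with $\chi=\Lambda|_{E^{\times}}$, and concludes $\Pi\cong\Ind_{J_\alpha}^{D^{\times}}\Lambda=\Pi_\chi$ by Frobenius reciprocity and irreducibility; that $(E/F,\chi)$ satisfies the two axioms of an admissible pair amounts to $\Pi$ being non-abelian and, in the ramified case, to a minimality constraint guaranteed by the choice of $\alpha$. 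The image is all of $\mathcal{A}_1^0(D)$ when $p\neq2$ and all of $\mathcal{A}_1^{\mathrm{nr}}(D)$ when $p=2$: in the latter case $F$ admits no tamely ramified quadratic extension, all admissible pairs have $E/F$ unramified, and a totally ramified $\Pi$ is by definition not tamely ramified.

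For injectivity one runs this backwards: by (3) and the established half of (4) one reduces to minimal pairs with $E$ of a fixed ramification type, and then recovers $(E/F,\chi)$ up to $F$-isomorphism from $\Pi_\chi$ — the stratum $(\alpha,J_\alpha)$ from $\Res_{U_D}\Pi_\chi$ and $\chi=\Lambda|_{E^{\times}}$ from the intertwining of $\Lambda$, the case $n\equiv2\pmod4$ being pinned down by the trace values in Proposition \ref{Prop:Heisenberg}(2)(c). The converse of (4) then follows: if $E/F$ is ramified and $\Phi$ is a non-trivial unramified character of $F^{\times}$ with $(\Phi\circ\mathrm{Nrd})\otimes\Pi_\chi\cong\Pi_\chi$, then by (3) and injectivity there is an $F$-automorphism $j$ of $E$ with $(\Phi\circ\mathrm{N}_{E/F})\otimes\chi=\chi\circ j$; for $j=\mathrm{id}$ this forces $\Phi\circ\mathrm{N}_{E/F}$ trivial, hence $\Phi$ trivial since $\mathrm{N}_{E/F}(\varpi_E)=-\varpi_E^{2}$ is a uniformizer of $F$, while for the non-trivial $j$ the relation restricts on $U_E^1$ to show $\chi|_{U_E^1}$ is $j$-invariant, i.e.\ factors through $\mathrm{N}_{E/F}$, contradicting axiom (2) for the admissible pair $(E/F,\chi)$; thus $\Pi_\chi$ is totally ramified.

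The hard part will be precisely the recognition step underlying injectivity and surjectivity in positive level: that an abstract irreducible $\Pi$ canonically determines, up to $D^{\times}$-conjugacy, a stratum $(\alpha,J_\alpha)$ and an extension $\Lambda$, and conversely that isomorphic $\Pi_\chi$ force $F$-isomorphic pairs. This is the intertwining theory for the quaternion analogue of simple strata, to be extracted from \cite{BH}; its most delicate instance is $n\equiv2\pmod4$, where $\Lambda$ is assembled from the Heisenberg representation $\eta_\theta$ of Proposition \ref{Prop:Heisenberg} and is rigidified only by $\Lambda|_{F^{\times}}$ together with its character values on $\mu_E\setminus\mu_F$, so that matching two such $\Lambda$'s demands the full content of that proposition rather than a bare Frobenius-reciprocity count.
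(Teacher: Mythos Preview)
The paper does not give a proof of this theorem at all: it is stated in a subsection explicitly labelled as a review, with the remark that ``All the proofs can essentially be found in [BH] except that they give a complete classification of irreducible admissible representations of both $GL_2(F)$ and $D^\times$ while the parametrization of tame (cuspidal) representations are described only in the case of $GL_2(F)$.'' So there is nothing to compare against beyond the citation to \cite{BH}.

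Your proposal is precisely the adaptation of the Bushnell--Henniart $\mathrm{GL}_2$ arguments to $D^{\times}$ that the paper alludes to but does not carry out. The reduction of properties (1)--(3) to the minimal case via the definition, the reading of the level and central character from $\Lambda$, the elementary half of (4), and the outline of surjectivity/injectivity via strata and intertwining (with the Heisenberg case $n\equiv 2\pmod 4$ handled through Proposition \ref{Prop:Heisenberg}) are all the right ingredients. One small inaccuracy: to arrange minimality you do not ``replace $\alpha$ by a suitable $E^{\times}$-multiple'' but rather twist $\Pi$ by a character $\Phi\circ\mathrm{Nrd}$ of $D^{\times}$ to lower the level as far as possible; the resulting $\alpha$ then has the minimality property automatically. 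Apart from this, your sketch is sound and simply supplies more detail than the paper chose to include.
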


We only state the corresponding theorem for not tamely ramified representations in a very crude form. The proof of the theorem in a more refined form is found again in \cite{BH}.
\begin{Thm}\label{Thm:quaternion wild}
 Assume that $p=2$. Let $\Pi$ represent an element in $\mathcal{A}_1^0(D) \setminus \mathcal{A}_1^{\emph{nr}}(D)$. Let $n \in \mathbb{Z}$ be the minimal integer among the levels of representations obtained from $\Pi$ by a character twist.
 Then $n$ is odd and there exists a totally ramified quadratic extension $E/F$ embedded in $D$ and a character $\chi$ of $E^{\times}U_D^{(n+1)/2}$ such that $\Pi \cong \Ind _{E^{\times}U_D^{(n+1)/2}} ^{D^{\times}} \chi$. 
\end{Thm}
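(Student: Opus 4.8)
The statement is the crude form, transported to $D^{\times}$, of the classification of the irreducible admissible representations of $D^{\times}$ by simple strata and simple types carried out in \cite{BH}; the plan is to quote that classification and read off the two assertions by an elementary valuation computation. First I would normalize $\Pi$: replacing it by $(\Phi\circ\mathrm{Nrd})\otimes\Pi$ for a suitable character $\Phi$ of $F^{\times}$, an operation that preserves the subset $\mathcal{A}_1^0(D)\setminus\mathcal{A}_1^{\mathrm{nr}}(D)$, I may assume the level of $\Pi$ equals $n$, and $n\geq 1$ since $\Pi$ is not a character. As $2([n/2]+1)\geq n+1$, the group $U_D^{[n/2]+1}/U_D^{n+1}$ is abelian, so $\Pi|_{U_D^{[n/2]+1}}$ is a direct sum of characters $1+x\mapsto\psi_D(\beta x)$ with $\beta\in\mathfrak{q}^{-n}$; since the level is exactly $n$, at least one such $\beta$ satisfies $v_D(\beta)=-n$, and, $U_D^{[n/2]+1}$ being normal in $D^{\times}$ with $\Pi$ irreducible, these characters form a single $D^{\times}$-orbit. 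Fixing one $\beta$, its class modulo $\mathfrak{q}^{-n+1}$ is a stratum $[\mathfrak{o}_D,n,n-1,\beta]$ occurring in $\Pi$.

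The heart of the argument is to pin down this stratum. If $\beta\equiv c\pmod{\mathfrak{q}^{-n+1}}$ for some $c\in F^{\times}$, then by the standard computation one produces a character $\phi$ of $F^{\times}$ such that $(\phi^{-1}\circ\mathrm{Nrd})\otimes\Pi$ has level strictly less than $n$, contradicting minimality; hence $E:=F[\beta]$ is a field, necessarily a maximal subfield of $D$, so a quadratic extension of $F$ embedded in $D$. Now $\mathrm{Nrd}$ is squaring on $F^{\times}$, so $v_D(\varpi_F)=2$; thus if $E/F$ is unramified then $v_D$ takes only even values on $E^{\times}$, forcing $n$ even, and $u:=\varpi_F^{n/2}\beta\in U_E$ must have residue generating $k_E$ over $k_F$ (otherwise $\beta$ would be congruent modulo $\mathfrak{q}^{-n+1}$ to an element of $F^{\times}$), so $[\mathfrak{o}_D,n,n-1,\beta]$ is a tame simple stratum; by the tame classification (Theorem~\ref{Thm:quaternion}) $\Pi$ would then be tamely ramified, hence, as $p=2$, unramified, contrary to hypothesis. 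Therefore $E/F$ is totally ramified. If in addition $n$ were even, then again $u=\varpi_F^{n/2}\beta\in U_E$, but now $k_E=k_F$, so the residue of $u$ lies in $k_F^{\times}$ and $\beta$ is congruent modulo $\mathfrak{q}^{-n+1}$ to an element of $F^{\times}$, which has been excluded. Hence $n$ is odd.

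With $n$ odd and $E/F$ totally ramified embedded in $D$, the stratum $[\mathfrak{o}_D,n,n-1,\beta]$ is automatically non-scalar (a scalar element has even $v_D$), and we are precisely in the ``$n$ odd'' case of the construction recalled before Theorem~\ref{Thm:quaternion}: by \cite{BH}, $\Pi$ is induced from a class $\Lambda\in C(\psi_\alpha)$ in $J_\alpha=E^{\times}U_D^{[n/2]+1}$ for a suitable $\alpha\in\mathfrak{p}_E^{-n}$ attached to the stratum, and in this case $C(\psi_\alpha)$ consists of characters of $J_\alpha$. Since $[n/2]+1=(n+1)/2$ for $n$ odd, putting $\chi=\Lambda$ gives the desired isomorphism $\Pi\cong\Ind_{E^{\times}U_D^{(n+1)/2}}^{D^{\times}}\chi$. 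The one substantial ingredient is this last appeal to \cite{BH}: that a twist-minimal irreducible representation of $D^{\times}$ genuinely contains a non-scalar simple stratum and is recovered by induction from the associated simple type. That is the $D^{\times}$-analogue of the theory of simple types, which I would cite rather than reprove; granting it, the remainder is the valuation bookkeeping above.
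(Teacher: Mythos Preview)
The paper does not actually prove this theorem: it states the result and defers entirely to \cite{BH} (``The proof of the theorem in a more refined form is found again in \cite{BH}''). Your proposal is a correct outline of precisely that argument---extract a fundamental stratum from $\Pi|_{U_D^{[n/2]+1}}$, rule out the scalar and unramified-simple cases by the minimality of $n$ and the hypothesis $\Pi\notin\mathcal{A}_1^{\mathrm{nr}}(D)$, deduce $E/F$ totally ramified with $n$ odd by valuation bookkeeping, and then invoke the type-theoretic exhaustion from \cite{BH} to obtain the induction---so you have essentially unpacked the citation the paper gives rather than taken a different route.
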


\subsection{Reduction of irreducible representations of $D^\times$} \label{subsection:quaternionred}

First let us clarify what we mean by reduction. If $\Pi$ is a representation of finite length, $(\Pi) ^{\text{ss}}$ denotes the semisimplification (i.e. the direct sum of composition factors with the appropriate multiplicities) of $\Pi$. Every representation $(\Pi, V)$ of a locally profinite group $G$ over $\overline{\mathbb{Q}} _p$ considered in this paper is finite-dimensional and can be realized over a finite extension $K$ of $\mathbb{Q} _p$. Let $A$ be the ring of integers in $K$ and $\mathfrak{m}$ the maximal ideal of $A$. The representation $(\Pi, V)$ is said to be $p$-integral if there exists an $A$-lattice $\Lambda \subset V$ stable under $G$. If $\Pi$ is $p$-integral, the semisimplification of the representation $\Lambda /\mathfrak{m} \Lambda$ over $A/\mathfrak{m}$ is independent up to isomorphism of the choice of a lattice $\Lambda$ (cf. \cite[Chap.15.2]{Se1}, \cite[Chap1.9]{Vi1}). We consider the semisimplification as a representation over $\overline{\mathbb{F}} _p$, call it, by abuse of language,  \emph{the reduction modulo $p$} (or even more simply, \emph{the reduction}) of $\Pi$ and denote it by $\overline{\Pi}^{\text{ss}}$.\footnote{Any irreducible representation of $D^{\times}$ factors through a finite quotient group after a suitable character twist. Therefore we can compute the reduction of irreducible representations by means of Brauer character. For a similar computation with Brauer character, see \cite[Appendice]{BD}. } If $\chi$ is a $p$-integral character, then we often simply denote its reduction by $\overline{\chi}$.

Next we note the following two facts which are simple, yet very useful for the computation of the reduction of representations of $D^{\times}$.
\begin{itemize}
 \item If $\sigma$ is a $p$-integral representation of an open subgroup $U$ of $D^{\times}$, $\Ind_U^{D^{\times}}\sigma$ is also $p$-integral and the reduction of the representation $\Ind_U^{D^{\times}}\sigma$ is isomorphic to the semisimplification of the (smooth) induction $\Ind_U^{D^{\times}}\overline{\sigma}^{\text{ss}}$ of the reduction $\overline{\sigma}^{\text{ss}}$ of $\sigma$.
 \item If $\sigma$ is a $p$-integral representation of $E^{\times}U_D^1$, where $E$ is unramified over $F$ (resp. where $E$ is totally ramified over $F$), and $E$ is considered as an $F$-subalgebra of $D$ via an $F$-embedding, the reduction of $\sigma$ can be computed by inflating the reduction of the restricted representation 
\[\Res_{\mu _{E} \times \varpi _F^{\mathbb{Z}}}^{E^{\times}U_D^1} \sigma \ \ \ \left(\text{resp.} \ \Res_{\mu _{F}\times \varpi_{E} ^{\mathbb{Z}}}^{E^{\times}U_D^1} \sigma \right)\]
to $E^{\times}U_D^1$, where $\varpi_F$ (resp. $\varpi_E$) denotes a uniformizer of $F$ (resp. $E$).

(Indeed, since $E^{\times}U_D^1$ is a semidirect product of the normal pro-$p$-subgroup $U_D^1$ and a locally profinite subgroup of the form above, every irreducible mod $p$ representation is inflated from an irreducible representation of the latter group. Thus a composition series of a mod $p$ representation of $E^{\times}U_D^1$ serves as a composition series of the restricted representation.)
\end{itemize}
The computation of the reduction in the tame case relies on the following key lemmas, which are variants of \cite[16.2 Lemma]{BH}.

Let $(E/F, \chi)$ be a minimal pair of positive level $m>0$. Let us set $n={2m}/e(E|F)$, take $\alpha \in \mathfrak{p}_E^{-m}$ and choose an $F$-embedding $E\rightarrow D$ exactly as in the previous subsection.
\begin{lem}
\label{lem:unram}
 Suppose that $n$ is even $($in which case $E/F$ is necessarily unramified$)$. Let us fix a generator $\zeta _{E} \in \mu _{E}$ and a uniformizer $\varpi _F$ of $F$. Then,
 \begin{equation*}
  \begin{split}
   \left( \mu _E \times \varpi _F^{\mathbb{Z}} \right) \Big\backslash E^{\times}U_D^1\Big/ J_{\alpha} \ \ \ &\left( \cong \ \underset{\mu _E}{\sim} \Big\backslash U_D^1\Big/U_E^1U_D^{[{(n+1)}/2]} \right) \\
   & = \{ J_{\alpha} \} \amalg \left \{ \coprod_{0\leq i\leq q} \zeta _E^ix J_{\alpha} \Biggm| x\in U_D^1 \setminus U_E^1U_D^{[{(n+1)}/2]} \right \} ,
  \end{split}
 \end{equation*}
 where $\underset{\mu _E}{\sim}$ stands for the equivalence relation induced by the left action of $\mu _E$ on $U_D^1\Big/U_E^1U_D^{[(n+1)/2]}$ defined by
 \[\zeta \cdot xU_E^1U_D^{[{(n+1)}/2]}= (\zeta x\zeta ^{-1})U_E^1U_D^{[{(n+1)}/2]} \]
 for $\zeta \in \mu_E$ and $x\in U_D^1$.

 $\Big($i.e. if $\zeta \in \mu _E \setminus \mu _F \ \text{and} \ u\in U_D^1$,
 \[\zeta u\zeta ^{-1} \equiv u \pmod{U_E^1U_D^{[{(n+1)}/2]}} \ \text{holds if and only if} \ u\in U_E^1U_D^{[{(n+1)}/2]}. \Big) \]
 In particular, we have
 \[\# \left( \left(\mu _E \times \varpi _F^{\mathbb{Z}} \right) \Big\backslash E^{\times}U_D^1\Big/ J_{\alpha} \right) =\frac{q^{2[n/4]}-1}{q+1} +1 \text{.}\]
\end{lem}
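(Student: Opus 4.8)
The plan is to peel the double coset space down to a question about $\mu_E$-orbits on a finite $p$-group, and then to isolate the arithmetic input, which concerns the parity of the layers of the $\mathfrak{q}$-adic filtration.

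First I would record the displayed isomorphism. Fix a uniformizer $\varpi_F$ of $F$; since $E/F$ is unramified it is also a uniformizer of $E$, so $E^\times U_D^1 = \mu_E\cdot\varpi_F^{\mathbb Z}\cdot U_D^1$ and $J_\alpha = E^\times U_D^{[(n+1)/2]} = \mu_E\cdot\varpi_F^{\mathbb Z}\cdot U_E^1 U_D^{[(n+1)/2]}$ (here $[(n+1)/2]=n/2$ as $n$ is even, and $U_E^1 U_D^{n/2}$ is a subgroup because $\mathfrak{q}$ is two-sided, hence $U_D^{n/2}$ is normal in $D^\times$). As $\varpi_F$ is central and lies in $J_\alpha$, an elementary manipulation of cosets identifies $(\mu_E\times\varpi_F^{\mathbb Z})\backslash E^\times U_D^1/J_\alpha$ with the set of $\mu_E$-conjugation orbits on $U_D^1/U_E^1 U_D^{n/2}$. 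Passing to the finite $p$-group $G = U_D^1/U_D^{n/2}$, with $G^j$ the image of $U_D^j$ and $\overline U$ the image of $U_E^1$, the task is to count $\mu_E$-orbits on $G/\overline U$.

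The arithmetic heart is a layer-by-layer analysis. Each $\mathrm{gr}^j G := G^j/G^{j+1}\cong\mathfrak{q}^j/\mathfrak{q}^{j+1}$ is one-dimensional over $k_D$, and $[U_D^i,U_D^j]\subseteq U_D^{i+j}$. Using $\mathfrak{q}^2=\varpi_F\mathfrak{o}_D$ one checks $U_E^1\cap U_D^j = U_E^{j/2}$ for $j$ even and $U_E^1\cap U_D^j\subseteq U_D^{j+1}$ for $j$ odd; since $k_E=k_D$ (both of cardinality $q^2$), the image of $U_E^1$ exhausts $\mathrm{gr}^j G$ when $j$ is even and is zero when $j$ is odd. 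Dually, conjugation by $\zeta\in\mu_E$ acts on $\mathrm{gr}^j G$, via $\varpi_D x\varpi_D^{-1}\equiv x^q\pmod{U_D^1}$, as multiplication by $\overline\zeta^{\,q^{-j}-1}$, which is trivial for $j$ even and equals multiplication by $\overline\zeta^{\,q-1}$ for $j$ odd; moreover $\overline\zeta^{\,q-1}\neq 1$ exactly when $\zeta\notin\mu_F$. So the even layers are absorbed by $U_E^1$ and fixed by $\mu_E$, while on the odd layers $U_E^1$ contributes nothing and $\mu_E/\mu_F$ acts freely.

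This gives the parenthetical assertion by a leading-term argument: for $u\in U_D^1\setminus U_E^1 U_D^{n/2}$, the largest $j$ with $u\in U_D^j U_E^1 U_D^{n/2}$ is odd (the even layers being swallowed by $U_E^1$), and for $\zeta\in\mu_E\setminus\mu_F$ the commutator $\zeta u\zeta^{-1}u^{-1}$ has leading term $(\overline\zeta^{\,q-1}-1)\cdot(\text{leading term of }u)\neq 0$ at level $j$, which is not absorbed into $U_E^1 U_D^{n/2}$; the reverse implication is immediate since $\zeta$ normalizes $U_E^1$ and $U_D^{n/2}$. Hence the $\mu_E$-stabilizer of a non-trivial coset in $G/\overline U$ is exactly $\mu_F$, so every non-trivial orbit has size $[\mu_E:\mu_F]=q+1$, with $\mu_E/\mu_F$ represented by $\zeta_E^0,\dots,\zeta_E^q$; this produces the explicit list $\{J_\alpha\}$ together with the $(q+1)$-element families $\coprod_{0\le i\le q}\zeta_E^i x J_\alpha$. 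Finally, adding up the filtration jumps of $U_D^1$ and of $U_E^1$ gives $\#(G/\overline U)=q^{2[n/4]}$, so the number of orbits is $1+(q^{2[n/4]}-1)/(q+1)$. I expect the main obstacle to be the leading-term bookkeeping of the third step — precisely, ensuring the non-triviality on an odd layer cannot be cancelled in the $U_E^1$-direction — the rest being either formal or a one-line valuation computation.
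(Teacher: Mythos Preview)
Your argument is correct and follows essentially the same route as the paper: both reduce to the parenthetical fixed-point statement and analyze it via the filtration $U_D^j/U_D^{j+1}$, using that the even layers are absorbed by $U_E^1$ while on the odd layers conjugation by $\zeta\in\mu_E\setminus\mu_F$ acts as multiplication by $\overline\zeta^{\,q-1}\neq 1$ on $k_D$. The only cosmetic difference is that the paper organizes this as an induction on $m=n/2$ (peeling off one layer per step and splitting on the parity of $m$), whereas you jump directly to the maximal odd $j$ with $u\in U_E^1 U_D^j$; the key computation $(\overline\zeta^{\,q-1}-1)\overline x=0\Rightarrow\overline x=0$ is identical.
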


We only consider a particularly simple situation in the corresponding lemma for ramified extensions.
\begin{lem}
\label{lem:ram}
 Suppose that $n$ is odd $($in which case $E/F$ is necessarily totally ramified and \emph{$p\neq 2$}$)$. Let us fix a uniformizer $\varpi _E$ of $E^{\times}$ such that $\varpi _E^2 \in F^{\times}$. Then,
 \begin{equation*}
  \begin{split}
   \left(\mu _F \times \varpi _E^{\mathbb{Z}}\right) \Big\backslash E^{\times}U_D^1\Big/ J_{\alpha} \ \ \ &\left(\cong \ \underset{\varpi _E^{\mathbb{Z}}}{\sim} \Big\backslash U_D^1\Big/U_E^1U_D^{[{(n+1)}/2]}\right) \\
   &= \{ J_{\alpha} \} \amalg \left \{ \coprod_{0\leq i\leq 1} \varpi _E^ix J_{\alpha} \Biggm| x\in U_D^1 \setminus U_E^1U_D^{[{(n+1)}/2]}\right \},
  \end{split}
 \end{equation*}
 where $\underset{\varpi _E^{\mathbb{Z}}}{\sim}$ stands for the equivalence relation induced by the left action of $\varpi _E^{\mathbb{Z}}$ on $U_D^1\Big/U_E^1U_D^{[(n+1)]}$ defined by
 \[\varpi _E^i \cdot xU_E^1U_D^{[{(n+1)}/2]}= (\varpi _E^i x\varpi _E^{-i})U_E^1U_D^{[{(n+1)}/2]} \]
 for $i \in \mathbb{Z}$ and $x\in U_D^1$.
 
 $\Big($i.e. if $u\in U_D^1$,
 \[\varpi _E u\varpi _E ^{-1} \equiv u \pmod{U_E^1U_D^{[{(n+1)}/2]}} \ \text{holds if and only if} \ u\in U_E^1U_D^{[{(n+1)}/2]}.\Big) \]
 In particular, we have
 \[\# \left(\left(\mu _F \times \varpi _E^{\mathbb{Z}}\right) \Big\backslash E^{\times}U_D^1\Big/ J_{\alpha}\right) =\frac{q^{{(n-1)}/2}-1}{2} +1 \text{.}\]
\end{lem}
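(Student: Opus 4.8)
The plan is to reduce the statement to a fixed‑point computation for the conjugation action of $\varpi_E$ on a small finite quotient group. Since $E/F$ is tamely and totally ramified with $\varpi_E^2\in F^{\times}$, one has $v_D(\varpi_E)=1$ (because $v_D|_{E^{\times}}=v_F\circ\mathrm{N}_{E/F}$ and $\mathrm{N}_{E/F}(\varpi_E)$ is a uniformizer of $F$), so $\varpi_E$ is simultaneously a uniformizer of $D$, and $\mu_E=\mu_F$ is central in $D^{\times}$. Write $N=[(n+1)/2]=(n+1)/2$; then $E^{\times}=\mu_F\varpi_E^{\mathbb{Z}}U_E^1$ with $U_E^1\subset U_D^1$, and $J_{\alpha}=E^{\times}U_D^{N}=\mu_F\varpi_E^{\mathbb{Z}}U_E^1U_D^{N}$. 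I would first verify the parenthetical identification: the central factor $\mu_F$ is absorbed into $J_{\alpha}$; since $E^{\times}U_D^1=\mu_F\varpi_E^{\mathbb{Z}}U_D^1$, every double coset meets $U_D^1$; and for $u\in U_D^1$, applying the homomorphism $v_D$ shows that the elements of $\varpi_E^{\mathbb{Z}}u\varpi_E^{\mathbb{Z}}U_E^1U_D^{N}$ lying in $U_D^1$ are exactly those in $\bigcup_{a\in\mathbb{Z}}(\varpi_E^{a}u\varpi_E^{-a})U_E^1U_D^{N}$. As $\varpi_E$ centralizes $U_E^1\subset E^{\times}$ and normalizes $U_D^{N}$, it normalizes $U_E^1U_D^{N}$, and this yields the bijection of the double coset space with $\underset{\varpi_E^{\mathbb{Z}}}{\sim}\backslash U_D^1/U_E^1U_D^{N}$.

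Next, since $\varpi_E^2=\varpi_F$ is central it acts trivially on $U_D^1/U_E^1U_D^{N}$, so every $\varpi_E^{\mathbb{Z}}$-orbit has one or two elements, and the counting reduces to showing that the only coset fixed by $\varpi_E$ is the trivial one — precisely the displayed ``i.e.'' clause. I would prove this by induction along the $\varpi_E$-stable filtration $G_k=U_E^1U_D^{k}$, $1\le k\le N$. One computes the graded pieces as $G_k/G_{k+1}\cong U_D^{k}/U_E^{k}U_D^{k+1}$; identifying $U_D^{k}/U_D^{k+1}\cong k_D$ via the uniformizer $\varpi_E$ and noting that $U_E^{k}=1+\varpi_E^{k}\mathfrak{o}_E$ has image $k_E=k_F\subset k_D$ there, one gets $G_k/G_{k+1}\cong k_D/k_F\cong\mathbb{F}_{q^2}/\mathbb{F}_q$. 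Conjugation by $\varpi_E$ acts on $U_D^{k}/U_D^{k+1}\cong k_D$ as the $q$-power map (the congruence recalled for $\varpi_D$ in the notation section, applied with $\varpi_D=\varpi_E$), hence on $G_k/G_{k+1}$ it acts as the nontrivial element of $\mathrm{Gal}(\mathbb{F}_{q^2}/\mathbb{F}_q)$ on the quotient $\mathbb{F}_{q^2}/\mathbb{F}_q$.

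The crux, and the only place $p\ne 2$ enters, is that this action has no nontrivial fixed point: if $x\in\mathbb{F}_{q^2}$ satisfies $x^{q}-x\in\mathbb{F}_q$, then $2(x^{q}-x)=\mathrm{Tr}_{\mathbb{F}_{q^2}/\mathbb{F}_q}(x^{q}-x)=0$, so $x^{q}=x$ and $x\in\mathbb{F}_q$. The induction then runs cleanly: if $\varpi_E u\varpi_E^{-1}\equiv u\pmod{G_k}$, then a fortiori the congruence holds modulo $G_{k-1}$, so $u\in G_{k-1}$ by the inductive hypothesis; the image of $u$ in $G_{k-1}/G_k$ is then fixed by $\varpi_E$, hence trivial, so $u\in G_k$; taking $k=N$ gives the ``i.e.'' clause. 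Finally $\#\bigl(U_D^1/U_E^1U_D^{N}\bigr)=\prod_{k=1}^{N-1}\#(G_k/G_{k+1})=q^{N-1}=q^{(n-1)/2}$, so the number of orbits is $1+\tfrac{q^{(n-1)/2}-1}{2}$, and the explicit list in the statement merely records that the trivial orbit corresponds to $J_{\alpha}$ while each remaining orbit, represented by some $x\in U_D^1\setminus U_E^1U_D^{N}$, consists of the two cosets $xJ_{\alpha}$ and $\varpi_E xJ_{\alpha}$. I expect the main obstacle to be the inductive conjugation computation of the second and third paragraphs — in particular correctly identifying the graded pieces $G_k/G_{k+1}$ with $\mathbb{F}_{q^2}/\mathbb{F}_q$ and the Frobenius action on them — whereas the double‑coset bookkeeping of the first paragraph and the final count are routine.
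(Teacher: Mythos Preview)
Your proposal is correct and follows essentially the same route as the paper: an induction along the filtration $U_E^1U_D^{k}$ reducing to a Frobenius fixed-point computation on $k_D/k_F$, settled by the trace identity and $p\neq 2$. You are somewhat more explicit than the paper about the preliminary double-coset/orbit identification and about the graded pieces $G_k/G_{k+1}$, but the core argument is the same.
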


Admitting these lemmas for a moment, we compute the reduction first.

Let $(E/F, \chi)$ be an admissible pair and let us choose an $F$-embedding $E\rightarrow D$. Let us take a decomposition $\chi =(\Phi \circ \mathrm{N}_{E/F})\otimes \chi ^{\prime}$ with $\Phi$ a character of $F^{\times}$ and $(E/F, \chi)$ a minimal pair, denote by $m$ the level of $\chi ^{\prime}$ and set $n= {2m}/e(E/F)$. 
\begin{Thm} \label{Thm:quaternionred}
 Let the notation be as above.

 The irreducible admissible representation $\Pi _{\chi}$ is $p$-integral if and only if $\chi$ is $p$-integral.

 Moreover, assume that $\chi$ is a $p$-integral character.
 \begin{enumerate}

  \item \label{Thm:unram}
  Assume that $n$ is even $($in which case $E/F$ is necessarily unramified$)$. Then we have
   \begin{equation*}
    \overline{\Pi } _{\chi} ^{\emph{ss}} \cong \begin{cases}
                                \left(\pi _{\overline{\chi}}^{\oplus \frac{q^{n/2}+q}{q+1}} \oplus \left(\bigoplus _{\tau \in X} \pi _{\overline{\chi} \otimes \tau}^{\oplus \frac{q^{n/2}-1}{q+1}}\right)\right)^{\emph{ss}} \qquad & \text{if} \ n \equiv 0\pmod {4} \text{,} \\
                                \left(\pi _{\overline{\chi}}^{\oplus \frac{q^{n/2}-q}{q+1}} \oplus \left(\bigoplus _{\tau \in X} \pi _{\overline{\chi} \otimes \tau}^{\oplus \frac{q^{n/2}+1}{q+1}}\right)\right)^{\emph{ss}} \qquad & \text{if} \ n \equiv 2\pmod {4} \text{,} 
                                                                   \end{cases}
   \end{equation*}
   where $X$ is the set of non-trivial mod $p$ characters of $E^{\times}$ which are trivial on $F^{\times}U_E^1$ $($consisting of $q$ elements$)$.

   Among mod $p$ representations of the form $\pi _{\overline{\chi}}$ or $\pi _{\overline{\chi} \otimes \tau}$ for some $\tau \in X$, there exists a reducible representation if and only if $\overline{\chi} (-1)= 1.$ 


   

  \item \label{Thm:ram}
  Assume that $n$ is odd $($in which case $E/F$ is necessarily totally ramified and \emph{$p\neq 2$}$)$. Let us denote by $\overline{\chi} ^{\ddagger}$ the extension of $\overline{\chi}$ to $E^{\times}U_D^1$ such that $\overline{\chi} ^{\ddagger}|_{U_D^1}=1$ as in Proposition \ref{Prop:quaternionmodpram}. Then we have
   \begin{equation*}
    \begin{split}
     \overline{\Pi } _{\chi} ^{\emph{ss}} &\cong \left(\Ind _{E^{\times}U_D^1}^{D^{\times}} \overline{\chi} ^{\ddagger}\right)^{\oplus \frac{q^{{(n-1)}/2}+1}{2}} \oplus \left(\Ind _{E^{\times}U_D^1}^{D^{\times}} (\overline{\chi} ^{\ddagger} \otimes \iota )\right)^{\oplus \frac{q^{{(n-1)}/2}-1}{2}} \\
                       &\cong \left(\bigoplus _{\pi \in I_1} \pi ^{\oplus q^{({n-1})/2}}\right) \oplus \left(\bigoplus _{\tilde{\chi} \in I_2} \tilde{\chi} ^{\oplus \frac{q^{{(n-1)}/2}+1}{2}}\right) \oplus \left(\bigoplus _{\tilde{\chi} \in I_2} (\tilde{\chi} \otimes \tilde{\iota}) ^{\oplus \frac{q^{{(n-1)}/2}-1}{2}} \right),
    \end{split}
   \end{equation*}
   where $\iota$ $($\emph{resp.} $\tilde{\iota})$ denotes the mod $p$ character of $E^{\times}U_D^1$ $($\emph{resp.} $D^{\times})$ trivial on $U_EU_D^1$ $($\emph{resp.} $U_D)$ sending any uniformizer $\varpi _E$ of $E^{\times}$ to $-1$, $I_1$ is the set of $($isomorphism classes of$)$ two-dimensional irreducible mod $p$ representations of $D^{\times}$ with a central character $\overline{\chi}|_{F^{\times}}$ and $I_2$ is the set of mod $p$ characters of $D^{\times}$ extending $\overline{\chi}^{\ddagger}$.
   
   If $E_0$ denotes an unramified quadratic extension of $F$, we have
   \[I_1= \{ \pi _{\tilde{\chi}} \mid \tilde{\chi} \ \text{is a regular mod} \ p \ \text{character of} \ E_0^{\times} \ \text{extending} \ \overline{\chi} |_{F^{\times}}\} ,\]
   \[I_2= \{ \varphi \circ \mathrm{Nrd} \mid \varphi \ \text{is a mod} \ p \ \text{character of} \ F^{\times} \ \text{such that} \ (\varphi \circ \mathrm{Nrd}) |_{E^{\times}}=\overline{\chi} \} ,\]
   and
   \begin{equation*}
    (\# I_1, \ \# I_2)= \begin{cases}
                         (\frac{q+1}{2}, \ 0) & \text{if} \ \chi (-1)=-1 \\
                         (\frac{q-1}{2}, \ 2) & \text{if} \ \chi (-1)=1 .
                        \end{cases}
   \end{equation*}
 \end{enumerate}
\end{Thm}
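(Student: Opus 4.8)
The plan is to reduce everything to the case of a minimal pair, realise $\Pi_{\chi}$ as an induced representation, and then compute the reduction in two stages, using the two stability facts recorded at the start of this subsection together with the double-coset Lemmas \ref{lem:unram} and \ref{lem:ram}.

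\emph{Reduction to a minimal pair and $p$-integrality.} Writing $\chi=(\Phi\circ\mathrm{N}_{E/F})\otimes\chi'$ with $(E/F,\chi')$ minimal, Theorem \ref{Thm:quaternion} gives $\Pi_{\chi}\cong(\Phi\circ\mathrm{Nrd})\otimes\Pi_{\chi'}$; since twisting by a $p$-integral character commutes with reduction and is visible on the parameters ($\pi_{\overline{\chi'}\otimes(\overline{\Phi}\circ\mathrm{N}_{E/F})}\cong(\overline{\Phi}\circ\mathrm{Nrd})\otimes\pi_{\overline{\chi'}}$, and likewise for characters and for $\Ind_{E^{\times}U_D^1}^{D^{\times}}$), it suffices to treat the case where $(E/F,\chi)$ itself is minimal of level $m$, with $n=2m/e(E|F)$; the case $n=0$ is classical (the reduction is then simply $\pi_{\overline{\chi}}$), so assume $n>0$. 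Here $\Pi_{\chi}=\Ind_{J_{\alpha}}^{D^{\times}}\Lambda$, and $\Pi_{\chi}$ is $p$-integral if and only if $\Lambda$ is: induction of a $p$-integral representation is $p$-integral, and conversely $\Lambda$ is a direct summand of $\Res_{J_{\alpha}}^{D^{\times}}\Pi_{\chi}$ by Mackey. Finally $\Lambda$ is $p$-integral exactly when $\chi$ is, because $\Lambda$ is assembled from $\chi$ and from the data $\psi_{\alpha}$, $\eta_{\theta}$, whose values are $p$-power roots of unity, hence units reducing to $1$; in particular $\overline{\Lambda}$ restricts to $\overline{\chi}$ on $E^{\times}$ and to $1$ on $U_D^{[(n+1)/2]}$.

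\emph{The inductive computation.} By the first stability fact $\overline{\Pi}_{\chi}^{\mathrm{ss}}\cong(\Ind_{J_{\alpha}}^{D^{\times}}\overline{\Lambda}^{\mathrm{ss}})^{\mathrm{ss}}$, and since $J_{\alpha}\subset E^{\times}U_D^1$ and induction is exact this equals $(\Ind_{E^{\times}U_D^1}^{D^{\times}}\sigma)^{\mathrm{ss}}$ with $\sigma=(\Ind_{J_{\alpha}}^{E^{\times}U_D^1}\overline{\Lambda}^{\mathrm{ss}})^{\mathrm{ss}}$. As $E^{\times}U_D^1$ is the semidirect product of the normal pro-$p$ subgroup $U_D^1$ with a torus ($\mu_E\times\varpi_F^{\mathbb{Z}}$ when $E/F$ is unramified, $\mu_F\times\varpi_E^{\mathbb{Z}}$ when $E/F$ is totally ramified), every composition factor of $\sigma$ is a character inflated from the torus, and by the second stability fact $\sigma$ is determined by its restriction to the torus, which one computes by Mackey's formula applied to $\Res_{\mathrm{torus}}\Ind_{J_{\alpha}}^{E^{\times}U_D^1}\overline{\Lambda}^{\mathrm{ss}}$ along the double cosets of Lemma \ref{lem:unram} (resp.\ Lemma \ref{lem:ram}): the distinguished double coset $\{J_{\alpha}\}$ contributes the restriction of $\overline{\Lambda}$ to the torus, while every other double coset contributes the torus-induction of a restriction of a conjugate of $\overline{\Lambda}$ to a subgroup of index $q+1$ (resp.\ $2$), decomposing into a full $\widehat{\mu_E/\mu_F}$-orbit (resp.\ $\{1,\iota\}$-orbit) of twists of $\overline{\chi}$, each with one and the same multiplicity. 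Summing and matching dimensions yields $\sigma$; then $(\Ind_{E^{\times}U_D^1}^{D^{\times}}(\overline{\chi}^{\dagger}\otimes\tau))^{\mathrm{ss}}=\pi_{\overline{\chi}\otimes\tau}$ by the very definition of $\pi_{\xi}$ in the unramified case, while in the ramified case $\Ind_{E^{\times}U_D^1}^{D^{\times}}\overline{\chi}^{\ddagger}$ and $\Ind_{E^{\times}U_D^1}^{D^{\times}}(\overline{\chi}^{\ddagger}\otimes\iota)$ decompose by Proposition \ref{Prop:quaternionmodpram}, which puts $\overline{\Pi}_{\chi}^{\mathrm{ss}}$ in the stated form, the count $(\#I_1,\#I_2)$ being Proposition \ref{Prop:quaternionmodpram} with $\nu=\overline{\chi}$ (regular precisely when $\overline{\chi}(-1)=-1$). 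For the last assertion of \ref{Thm:unram}, $\pi_{\overline{\chi}\otimes\tau}$ is reducible iff $(\overline{\chi}\otimes\tau)^{\theta}=\overline{\chi}\otimes\tau$; since $\theta$ acts on $\widehat{\mu_E/\mu_F}$ by inversion this says $\overline{\chi}^{\theta}/\overline{\chi}=\tau^{2}$, solvable in $\tau$ iff $(\overline{\chi}^{\theta}/\overline{\chi})^{(q+1)/2}=1$, and evaluating on a generator of $\mu_E$ identifies the left side with $\overline{\chi}(-1)$.

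\emph{The main difficulty.} The delicate point is the computation of $\sigma$ in the unramified case when $n\equiv2\pmod4$. There $\Lambda$ is the $q$-dimensional representation of Proposition \ref{Prop:Heisenberg}, so the distinguished coset contributes $\overline{\Lambda}|_{\mathrm{torus}}$, whose character on $\mu_F$ is $q\,\overline{\chi}$ but on $\mu_E\setminus\mu_F$ is $-\overline{\chi}$ by Proposition \ref{Prop:Heisenberg}; a short orthogonality computation then identifies this restriction with $\bigoplus_{\tau\in X}(\overline{\chi}\otimes\tau)|_{\mathrm{torus}}$, which pointedly \emph{omits} $\overline{\chi}$ itself, whereas for $n\equiv0\pmod4$ the representation $\Lambda$ is a character and the distinguished coset just contributes $\overline{\chi}$. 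This lone sign is exactly what interchanges the two multiplicity patterns $\tfrac{q^{n/2}\pm q}{q+1}$ and $\tfrac{q^{n/2}\mp1}{q+1}$; verifying that the non-distinguished cosets feed $\overline{\chi}$ and each $\overline{\chi}\otimes\tau$ with equal multiplicity (using that $\mu_F$ and $\varpi_F$ are central in $D^{\times}$, so conjugation does not disturb the restrictions that matter) and that the totals come out as predicted is the bookkeeping to be done carefully. The ramified case runs the same way but more cheaply, the index-$2$ double cosets being what splits the outcome between $\overline{\chi}^{\ddagger}$ and $\overline{\chi}^{\ddagger}\otimes\iota$. Alternatively, after twisting so that all representations in sight factor through a common finite quotient of $D^{\times}$, one can run the whole computation with Brauer characters, decomposing the restriction of $\tr\Pi_{\chi}$ to $p$-regular classes against the Brauer characters of the $\pi_{\xi}$ and of the characters of $D^{\times}$, as in \cite{BD}.
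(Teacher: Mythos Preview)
Your proposal is correct and follows essentially the same route as the paper: reduce to a minimal pair, identify $\overline{\Lambda}^{\mathrm{ss}}$ (using Proposition~\ref{Prop:Heisenberg} for the $n\equiv 2\pmod 4$ Heisenberg case, where the trace formula indeed yields $\bigoplus_{\tau\in X}(\overline{\chi}\otimes\tau)^{\dagger}|_{J_\alpha}$), then apply Mackey together with Lemmas~\ref{lem:unram} and~\ref{lem:ram} to compute the induction to $E^{\times}U_D^1$, and finally induce to $D^{\times}$ and invoke Proposition~\ref{Prop:quaternionmodpram}. The only organisational difference is that the paper first factors out the character twist and reduces to $(\Ind_{J_\alpha}^{E^{\times}U_D^1}1_{J_\alpha})^{\mathrm{ss}}$ before running Mackey, whereas you run Mackey directly on $\overline{\Lambda}^{\mathrm{ss}}$; this is cosmetic. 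One small wording slip: your line ``$\overline{\Lambda}$ restricts to $\overline{\chi}$ on $E^{\times}$'' is literally true only when $\Lambda$ is a character (the cases $n$ odd or $n\equiv 0\pmod 4$), not in the Heisenberg case---but you correct this yourself in the ``main difficulty'' paragraph, so the argument stands.
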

\begin{Rem} \label{Rem:quaternionred}
 In most cases $($namely, except for the cases where $n=0$ and where $n=1, 2$ and  $\overline{\chi}$ is irregular$)$ every irreducible admissible mod $p$ representation of $D^{\times}$ with a central character $\overline{\chi} |_{F^{\times}}$ occurs in $\overline{\Pi} _{\chi}^{\emph{ss}}$.
\end{Rem}

\begin{proof}
 We are immediately reduced to show the theorem for \emph{minimal} representations (i.e. ones which are parametrized by minimal pairs).
 
 If $\chi$ is $p$-integral, $\Lambda$ is $p$-integral and therefore $\Pi _{\chi}$ is $p$-integral. Conversely, if $\Pi _{\chi}$ is $p$-integral, the central character $\chi |_{F^{\times}}$ is $p$-integral and so is $\chi$.
 
 In the proof of this theorem and the next one, we denote by $1_G$ the trivial mod $p$ character of a locally profinite group $G$.
 
 First assume that $n$ is even and $\chi$ is $p$-integral.
 
 If $n=0$, the reduction $\overline{\Lambda}^{\text{ss}}$ of the representation $\Lambda$ of $J$ is $\overline{\chi}^{\dagger}$ and the assertion is obvious.
 
 For $n>0$, the reduction $\overline{\Lambda}^{\text{ss}}$ of $\Lambda$ is trivial on the normal pro-$p$-subgroup $U_D^{[(n+1)/2]}$ and it is a direct sum of irreducible mod $p$ representations inflated from representations of an abelian group $J_{\alpha}/U_D^{(n+1)/2}$. The composition factors can be read off by restricting $\Lambda$ to $\mu _E F^{\times}$ (cf. Proposition \ref{Prop:Heisenberg}):
 \begin{equation*}
  \overline{\Lambda}^{\text{ss}} \cong \begin{cases}
                       \overline{\chi} ^{\dagger} |_{J_{\alpha}} & \text{if } n \equiv 0 \pmod 4 \\
                       \bigoplus _{\tau \in X} (\overline{\chi} \otimes \tau) ^{\dagger} |_{J_{\alpha}} & \text{if } n \equiv 2 \pmod 4.
                      \end{cases}
 \end{equation*}
So we have
  \begin{equation*}
  \begin{split}
   \overline{\left(\Ind_{J_{\alpha}}^{E^{\times}U_D^1} \Lambda\right)}^{\text{ss}} &\cong \left(\Ind_{J_{\alpha}}^{E^{\times}U_D^1} \overline{\Lambda}^{\text{ss}}\right)^{\text{ss}} \\
   &\cong \begin{cases}
           \overline{\chi}^{\dagger} \otimes \left(\Ind_{J_{\alpha}}^{E^{\times}U_D^1} 1_{J_{\alpha}}\right)^{\text{ss}} & \text{if } n \equiv 0 \pmod 4 \\
           \left(\bigoplus _{\tau \in X} (\overline{\chi} \otimes \tau)^{\dagger}\right) \otimes \left(\Ind_{J_{\alpha}}^{E^{\times}U_D^1} 1_{J_{\alpha}}\right)^{\text{ss}} &\text{if } n \equiv 2 \pmod 4.
          \end{cases}
  \end{split}
  \end{equation*}
 Thus, we are reduced to show
 \begin{equation*}
   \left(\Ind_{J_{\alpha}}^{E^{\times}U_D^1} 1_{J_{\alpha}}\right)^{\text{ss}} \cong
  \begin{cases}
  1_{E^{\times}U_D^1} \oplus \left(\left(\Ind_{F^{\times}U_D^1}^{E^{\times}U_D^1} 1_{F^{\times}U_D^1}\right)^{\oplus \frac{q^{n/2}-1}{q+1}}\right)^{\text{ss}} & \text{if } n \equiv 0 \pmod 4 \\
  1_{E^{\times}U_D^1} \oplus \left(\left(\Ind_{F^{\times}U_D^1}^{E^{\times}U_D^1} 1_{F^{\times}U_D^1}\right)^{\oplus \frac{q^{(n-2)/2}-1}{q+1}}\right)^{\text{ss}} & \text{if } n \equiv 2 \pmod 4,
  \end{cases}
 \end{equation*}
 i.e.
 \[\left(\Ind_{J_{\alpha}}^{E^{\times}U_D^1} 1_{J_{\alpha}}\right)^{\text{ss}} \cong 1_{E^{\times}U_D^1} \oplus \left(\left(\Ind_{F^{\times}U_D^1}^{E^{\times}U_D^1} 1_{F^{\times}U_D^1}\right)^{\oplus \frac{q^{2[n/2]}-1}{q+1}}\right)^{\text{ss}}.\]
 
 Now, if we take a generator $\zeta _E \in \mu _E$ and a uniformizer $\varpi _F$ of $F$, Mackey's decomposition \cite[7.3]{Se1}, \cite[I.5.5]{Vi1} together with Lemma \ref{lem:unram} gives
 \begin{equation*}
  \begin{split}
   \Res_{\mu _E \times \varpi _F^{\mathbb{Z}}}^{E^{\times}U_D^1} \Ind_{J_{\alpha}}^{E^{\times}U_D^1} 1_{J_{\alpha}}
   &\cong \bigoplus _{(\mu _E \times \varpi _F^{\mathbb{Z}})xJ_{\alpha} \atop \in (\mu _E \times \varpi _F^{\mathbb{Z}}) \big\backslash E^{\times}U_D^1 \big/ J_{\alpha}} \Ind_{J_{\alpha}^{(x)}}^{\mu _E \times \varpi _F^{\mathbb{Z}}} 1_{J_{\alpha}^{(x)}} \\
   &\cong 1_{\mu _E \times \varpi _F^{\mathbb{Z}}} \oplus \Big( \bigoplus _{xU_E^1U_D^{[(n+1)/2]}} \Ind_{J_{\alpha}^{(x)}}^{\mu _E \times \varpi _F^{\mathbb{Z}}} 1_{J_{\alpha}^{(x)}}\Big),
  \end{split}
 \end{equation*}
 where $xU_E^1U_D^{[(n+1)/2]}$ runs through $\big( U_D^1\big/U_E^1U_D^{[(n+1)/2]}\big) \setminus \big\{ U_E^1U_D^{[(n+1)/2]} \big\}$ and $J_{\alpha}^{(x)}$ denotes $(\mu _E \times \varpi _F^{\mathbb{Z}}) \cap xJ_{\alpha}x^{-1}$ for $x \in E^{\times}U_D^1$.

 Here we have $J_{\alpha}^{(x)}=\mu _F \times \varpi _F^{\mathbb{Z}}$ for any $x \in U_D^1\setminus U_E^1U_D^{[(n+1)/2]}$ and $n>2$. Indeed, the index of $J_{\alpha}^{(x)}$ in $\mu _E \times \varpi _F^{\mathbb{Z}}$ is equal to the length of the $\mu _E$-orbit containing $xU_E^1U_D^{[(n+1)/2]}$, namely, to $q+1$.
 The subset $J_{\alpha}^{(x)}$ surely contains $\mu _F \times \varpi _F^{\mathbb{Z}}$ and the desired equality follows.\footnote{Alternatively one can proceed as follows. One inclusion of the equality being trivial, we are to show
 \[ \text{if } x \in U_D^1 \text{ satisfies } \ xyx^{-1} \in \mu _E \setminus \mu _F \text{ with } y \in J_{\alpha}, \text{ then } x \in U_E^1U_D^{[(n+1)/2]} .\]
 Let us put $\zeta =xyx^{-1} \in \mu _E \setminus \mu _F.$ Comparing the images under $v_D$ and the canonical projection $U_D \rightarrow U_D/U_D^1$, we see that $yU_D^1=\zeta U_D^1$. Hence $\zeta x\zeta ^{-1} =xy\zeta ^{-1} \equiv x \pmod{U_D^1}$ and the claim now follows from Lemma \ref{lem:unram}.}
 
 Hence, inflating the representation from $\mu _E \times \varpi _F^{\mathbb{Z}}$ to $E^{\times}U_D^1$, we obtain
 \[ \left(\Ind_{J_{\alpha}}^{E^{\times}U_D^1} 1_{J_{\alpha}}\right)^{\text{ss}} \cong 1_{E^{\times}U_D^1} \oplus \left(\left(\Ind_{F^{\times}U_D^1}^{E^{\times}U_D^1} 1_{F^{\times}U_D^1}\right)^{\oplus \frac{q^{[n/4]}-1}{q+1}} \right)^{\text{ss}} \]
 as required. Thus we have the required decomposition of $\overline{\Pi} _{\chi}$. The assertion about the existence of reducible representations of the form $\pi _{\overline{\chi}}$ or $\pi _{\overline{\chi} \otimes \tau}$ can be verified in much the same way as the proof of similar assertions in Proposition \ref{Prop:quaternionmodpram}. This completes the proof of \ref{Thm:unram}.
 
 Next assume that $n$ is odd and $\chi$ is $p$-integral. The strategy for the computation is exactly the same as that in the case of unramified representations.
 
 The reduction $\overline{\Lambda}^{\text{ss}}$ of $\Lambda$ is isomorphic to $\overline{\chi}^{\ddagger}|_{J_{\alpha}}$ and the reduction $\overline{(\Ind_{J_{\alpha}}^{E^{\times}U_D^1} \Lambda)}^{\text{ss}}$ of $\Ind_{J_{\alpha}}^{E^{\times}U_D^1} \Lambda$ is given by
 \begin{equation*}
  \begin{split}
   \overline{\left(\Ind_{J_{\alpha}}^{E^{\times}U_D^1} \Lambda\right)}^{\text{ss}} &\cong \left(\Ind_{J_{\alpha}}^{E^{\times}U_D^1} \overline{\Lambda}^{\text{ss}}\right)^{\text{ss}} \\
                                                       &\cong \overline{\chi}^{\ddagger} \otimes \left(\Ind_{J_{\alpha}}^{E^{\times}U_D^1} 1_{J_{\alpha}}\right)^{\text{ss}},
  \end{split}
 \end{equation*}
 
 Then Mackey's decomposition \cite[7.3]{Se1}, \cite[I.5.5]{Vi1} and Lemma \ref{lem:ram} yield in turn
 \[ \left(\Ind_{J_{\alpha}}^{E^{\times}U_D^1} 1_{J_{\alpha}}\right)^{\text{ss}}  
 \cong 1_{E^{\times}U_D^1} \oplus \left(\Ind_{F^{\times}U_D^1}^{E^{\times}U_D^1} 1_{F^{\times}U_D^1}\right)^{\oplus \frac{q^{(n-1)/2}-1}{2}} .\]
 The remaining assertions follow from Proposition \ref{Prop:quaternionmodpram}.
\end{proof}

The theorem for the rest of representations is simpler.
\begin{Thm} \label{Thm:quaternionred wild}
 Assume that $p=2$. Let $\Pi=\Ind_{E^{\times}U_D^{(n+1)/2}} ^{D^{\times}} \chi$ represent an element in $\mathcal{A}_1^0(D) \setminus \mathcal{A}_1^{\emph{nr}}(D)$ $( \emph{cf. Theorem } \ref{Thm:quaternion wild} )$.

 The representation $\Pi$ is $p$-integral if and only if the central character $\Phi$ of $\Pi$ is $(\text{or equivalently, } \chi \text{ is})$ $p$-integral.

 Moreover, assume that $\Pi$ is a $p$-integral representation. Let us denote by $\overline{\chi} ^{\ddagger}$ the extension of $\overline{\chi}$ to $E^{\times}U_D^1$ such that $\overline{\chi} ^{\ddagger}|_{U_D^1}=1$ as in Proposition \ref{Prop:quaternionmodpram}.

 Then we have
   \begin{equation*}
    \begin{split}
     \overline{\Pi } ^{\emph{ss}} &\cong \left(\Ind _{E^{\times}U_D^1}^{D^{\times}} \overline{\chi} ^{\ddagger}\right)^{\oplus q^{(n-1)/2}} \\
                                         &\cong \left(\bigoplus _{\pi \in I_1} \pi ^{\oplus q^{({n-1})/2}}\right) \oplus \tilde{\chi} ^{\oplus q^{(n-1)/2}},
    \end{split}
   \end{equation*}
   where $I_1$ is the set of $($isomorphism classes of$)$ two-dimensional irreducible mod $p$ representations of $D^{\times}$ with a central character $\overline{\Phi} (=\overline{\chi}|_{F^{\times}})$ and $\tilde{\chi}$ is the mod $p$ character of $D^{\times}$ extending $\overline{\Phi}$ $( \text{or equivalently, extending } \overline{\chi}^{\ddagger})$.
\end{Thm}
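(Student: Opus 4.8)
The plan is to imitate the proof of Theorem~\ref{Thm:quaternionred}(\ref{Thm:ram}), which simplifies substantially because $p=2$. Throughout I write $G=E^{\times}U_D^1$ and $J=E^{\times}U_D^{(n+1)/2}$, so that $\Pi=\Ind_{J}^{D^{\times}}\chi=\Ind_{G}^{D^{\times}}\bigl(\Ind_{J}^{G}\chi\bigr)$, and I use that, since $E/F$ is totally ramified, $\varpi_E$ is a uniformizer of $D$ and $[G:J]=[U_D^1:U_E^1U_D^{(n+1)/2}]=q^{(n-1)/2}$.

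For the $p$-integrality I would argue exactly as in Theorem~\ref{Thm:quaternionred}. The character $\chi$ is $p$-integral precisely when $\chi(\varpi_E)$ is a unit, because every element of $J$ other than a power of $\varpi_E$ lies in $\mu_F$, $U_E^1$ or $U_D^{(n+1)/2}$, on which a smooth character takes values that are roots of unity (prime to $p$ on $\mu_F$, of $p$-power order on the pro-$p$ groups $U_E^1$, $U_D^{(n+1)/2}$); hence if $\chi$ is $p$-integral so is $\Pi$, by the first bullet of~\ref{subsection:quaternionred}. Conversely, if $\Pi$ is $p$-integral so is its central character $\Phi=\chi|_{F^{\times}}$, and writing $\varpi_F=\varpi_E^{2}v$ with $v\in U_E$ (possible since $v_E(\varpi_F)=2$) one gets $\Phi(\varpi_F)=\chi(\varpi_E)^{2}\chi(v)$ with $\chi(v)$ a unit, so $\chi(\varpi_E)$ is a unit; this also gives the equivalence ``$\Phi$ $p$-integral $\Leftrightarrow\chi$ $p$-integral''.

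Now assume $\chi$ is $p$-integral. Its reduction $\overline{\chi}$ is trivial on the pro-$p$ group $U_D^{(n+1)/2}$, so $\overline{\chi}=\overline{\chi}^{\ddagger}|_{J}$, where $\overline{\chi}^{\ddagger}$ is the character of $E^{\times}U_D^1$ attached to $\nu=\overline{\chi}|_{E^{\times}}$ in Proposition~\ref{Prop:quaternionmodpram}. Since reduction commutes with induction, the projection formula gives
\[
\overline{\bigl(\Ind_{J}^{G}\chi\bigr)}^{\mathrm{ss}}\cong\bigl(\Ind_{J}^{G}\overline{\chi}^{\ddagger}|_{J}\bigr)^{\mathrm{ss}}\cong\overline{\chi}^{\ddagger}\otimes\bigl(\Ind_{J}^{G}1_{J}\bigr)^{\mathrm{ss}}.
\]
The heart of the argument is the claim $\bigl(\Ind_{J}^{G}1_{J}\bigr)^{\mathrm{ss}}\cong 1_{G}^{\oplus q^{(n-1)/2}}$. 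To prove it I would observe that $\Ind_{J}^{G}1_{J}$ is the linearization of the $G$-set $G/J$, hence factors through $G/\operatorname{core}_{G}(J)$ with $\operatorname{core}_{G}(J)=\bigcap_{g\in G}gJg^{-1}$; and $\operatorname{core}_{G}(J)\supseteq F^{\times}U_D^{(n+1)/2}$, since $F^{\times}U_D^{(n+1)/2}$ is a normal subgroup of $G$ contained in $J$. Now
\[
[G:F^{\times}U_D^{(n+1)/2}]=[E^{\times}U_D^1:F^{\times}U_D^1]\cdot[F^{\times}U_D^1:F^{\times}U_D^{(n+1)/2}]
\]
equals $2$ times a power of $q$, hence is a power of $2$; therefore $G/\operatorname{core}_{G}(J)$ is a finite $2$-group, its group algebra over $\overline{\mathbb{F}}_{2}$ is local, every representation of it is unipotent, and $\bigl(\Ind_{J}^{G}1_{J}\bigr)^{\mathrm{ss}}$ is a direct sum of $\dim\Ind_{J}^{G}1_{J}=[G:J]=q^{(n-1)/2}$ copies of $1_{G}$. (Alternatively one can run the Mackey computation of Theorem~\ref{Thm:quaternionred}(\ref{Thm:ram}) with a substitute for Lemma~\ref{lem:ram}: the $\varpi_E$-conjugation orbits in $U_D^1/U_E^1U_D^{(n+1)/2}$ all have $2$-power length, because $\operatorname{Ad}(\varpi_E^{2})$ acts trivially on each graded piece $\mathfrak q^{i}/\mathfrak q^{i+1}$ and so unipotently on the finite $2$-group $U_D^1/U_D^{(n+1)/2}$, and each $\Ind_{\varpi_E^{d\mathbb Z}}^{\varpi_E^{\mathbb Z}}1$ with $d$ a power of $2$ has semisimplification $1^{\oplus d}$.)

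Assembling these gives $\overline{\Pi}^{\mathrm{ss}}\cong\bigl(\Ind_{G}^{D^{\times}}\overline{\chi}^{\ddagger}\bigr)^{\oplus q^{(n-1)/2}}$, and the case $p=2$ of Proposition~\ref{Prop:quaternionmodpram} (applied to $\nu=\overline{\chi}|_{E^{\times}}$) identifies $\Ind_{G}^{D^{\times}}\overline{\chi}^{\ddagger}$ with $\bigl(\bigoplus_{\pi\in I_1}\pi\bigr)\oplus\tilde{\chi}$, where $I_1$ and $\tilde\chi$ are exactly as in the statement; since that decomposition is already semisimple, the second displayed isomorphism follows. The step I expect to require real care is the claim on $\bigl(\Ind_{J}^{G}1_{J}\bigr)^{\mathrm{ss}}$: the wild ramification of $E/F$ makes Lemma~\ref{lem:ram} unusable (its orbit count $\tfrac{q^{(n-1)/2}-1}{2}+1$ is not even an integer when $q$ is even), so it must be replaced by the $\operatorname{core}$/$2$-group observation above (or by the unipotence of $\operatorname{Ad}(\varpi_E^{2})$); everything else is formal or a transcription of the tame case.
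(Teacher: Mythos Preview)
Your proof is correct, and the overall scaffolding matches the paper's: both reduce to understanding $(\Ind_J^G 1_J)^{\mathrm{ss}}$ and then apply Proposition~\ref{Prop:quaternionmodpram}. The key step, however, is handled differently. You prove the sharp statement $(\Ind_J^G 1_J)^{\mathrm{ss}}\cong 1_G^{\oplus q^{(n-1)/2}}$ by observing that $G/\operatorname{core}_G(J)$ is a finite $2$-group (via the normal subgroup $F^{\times}U_D^{(n+1)/2}\subset J$), so the permutation module is unipotent over $\overline{\mathbb{F}}_2$. The paper instead argues more softly: it only notes that $(\Ind_J^G 1_J)^{\mathrm{ss}}$ is a direct sum of mod $p$ characters of $G$ trivial on $F^{\times}$ (immediate, since $F^{\times}$ is central in $G$ and contained in $J$, and every irreducible mod $p$ representation of $G$ is a character), and then invokes the observation that for $p=2$ the representation $\Ind_{E^{\times}U_D^1}^{D^{\times}}\nu^{\ddagger}$ depends only on $\nu|_{F^{\times}}$---which is visible from the $p=2$ case of Proposition~\ref{Prop:quaternionmodpram}, where both $I_1$ and the single element of $I_2$ are determined by $\nu|_{F^{\times}}$ alone. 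Your route proves more about the intermediate object and makes the final assembly immediate; the paper's route proves less about $(\Ind_J^G 1_J)^{\mathrm{ss}}$ but compensates with a structural shortcut at the level of $D^{\times}$. Your explicit remark that Lemma~\ref{lem:ram} is unusable here (the orbit count is not even an integer for $q$ even) and must be replaced is exactly the point.
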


\begin{Rem} \label{Rem:quaternionred}
 Under the conditions of the theorem every irreducible admissible mod $p$ representation of $D^{\times}$ with a central character $\overline{\Phi}$ occurs in $\overline{\Pi}^{\emph{ss}}$.
\end{Rem}

\begin{proof}
 The proof is parallel to that for the preceding theorem in odd $n$ case.
 
 Assume that $\Pi=\Ind_{E^{\times}U_D^{(n+1)/2}}^{D^{\times}} \chi$ is $p$-integral. Then the reduction $\overline{\left(\Ind_{E^{\times}U_D^{(n+1)/2}}^{E^{\times}U_D^1} \chi \right)}^{\text{ss}}$ is isomorphic to $\overline{\chi}^{\ddagger} \otimes \left(\Ind_{E^{\times}U_D^{(n+1)/2}}^{E^{\times}U_D^1} 1_{E^{\times}U_D^{(n+1)/2}}\right)^{\text{ss}}$. Since $\left(\Ind_{E^{\times}U_D^{(n+1)/2}}^{E^{\times}U_D^1} 1_{E^{\times}U_D^{(n+1)/2}}\right)^{\text{ss}}$ is clearly a direct sum of mod $p$ characters trivial on $F^{\times}$ and an induced representation $\Ind_{E^{\times}U_D^1} ^{D^{\times}} \nu ^{\ddagger}$ only depends on $\nu|_{F^{\times}}$ if $p=2$ by Proposition \ref{Prop:quaternionmodpram}, the theorem follows easily.
\end{proof}

We now prove the Lemmas \ref{lem:unram}, \ref{lem:ram}.

\begin{proof}[proof of Lemma \ref{lem:unram}]
 Let us set $n=2m$. We are going to prove the ``only if'' part of the parenthesized restatement, namely,
 \[ \text{if } u\in U_D^1 \text{ satisfies } \zeta u\zeta ^{-1} \equiv u \pmod{U_E^1U_D^{m}} \text{ with } \zeta \in \mu _E \setminus \mu _F, \text{ then } u\in U_E^1U_D^{m}, \]
 by induction on $m$.
 
 As $U_E^1U_D^1=U_D^1$, the claim for $m=1$ is trivial.
 
 Suppose $m\geqq 2$ and that the claim holds for $m-1$. Then it follows that $u\in U_E^1U_D^{m-1}$ and therefore we may assume $u\in U_D^{m-1}$. Noting that $\mathrm{min}\{h\in \mathbb{Z} \mid 2h\geqq m-1\}=[n/4]$, we see
 \begin{equation*}
  \begin{split}
   U_D^{m-1} \cap (U_E^1U_D^m ) &=U_E^{[n/4]}U_D^m \\
                                   &=\begin{cases}
                                    U_E^{m/2}U_D^m \qquad     &\text{ if } n\equiv 0 \pmod4 \\
                                    U_E^{(m-1)/2}U_D^m \qquad &\text{ if } n\equiv 2 \pmod4
                                     \end{cases} \\
                                   &=\begin{cases}
                                      U_D^m \qquad     &\text{ if } n\equiv 0 \pmod4 \\
                                      U_D^{m-1} \qquad &\text{ if } n\equiv 2 \pmod4.
                                     \end{cases}
  \end{split}
 \end{equation*}
 Hence, if $m$ is odd (i.e. $n \equiv 2\pmod4$), we have $u\in U_D^{m-1} \subseteq U_E^1U_D^{m}$ and there is nothing to prove.

 Suppose, on the other hand, that $m$ is even (i.e. $n \equiv 0\pmod4$). Then we are to show that
 \[ \text{ if } u\in U_D^{m-1} \text{ satisfies } \zeta u\zeta ^{-1} \equiv u \pmod{U_D^{m}} \text{ with } \zeta \in \mu _E \setminus \mu _F, \text{ then } u\in U_D^m. \]
 Let us take an element $\varpi _D$ of $D^{\times}$ such that $v_D(\varpi _D)=1$. Setting $u=1+\varpi _D^{m-1} x$ with $x \in \mathfrak{o}_D$ and expressing the claim in additive terms, we obtain the following restatement:
 \[ \text{if } x\in \mathfrak{o}_D \text{ satisfies } \zeta \varpi _D^{m-1} x\zeta ^{-1} \equiv \varpi _D^{m-1} x \pmod{\mathfrak{q}^m} \text{ with } \zeta \in \mu _E \setminus \mu _F, \text{ then } x\in \mathfrak{q} .\]
 The congruence is equivalent to $\varpi _D^{1-m} \zeta \varpi _D^{m-1} x\zeta ^{-1} \equiv x\pmod{\mathfrak{q}}$. If we denote by $\overline{x}$ and $\overline{\zeta}$ the image of $x$ and of $\zeta$ in $k_D$ respectively, this means $(\overline{\zeta} ^{q-1} -1)\overline{x} =0$. Since $\zeta \notin \mu _F$, we surely have $x\in \mathfrak{q}$.
 
 Observing $\# (U_D^1/U_E^1U_D^{[(n+1)/2]})=q^{2[n/4]}$, we verify the assertion about the number of double cosets easily.
\end{proof}
The other lemma is proved in a very similar manner.
\begin{proof}[proof of Lemma \ref{lem:ram}]
 Let us set $n=2m-1$. We are going to show the claim:
 \[ \text{if } u\in U_D^1 \text{ satisfies } \varpi _E u\varpi _E^{-1} \equiv u\pmod{U_E^1U_D^m}, \text{ then } u\in U_E^1U_D^m, \]
 by induction on $m$.

 The claim for $m=1$ is trivial. Suppose $m\geqq 2$ and that the claim holds for $m-1$. The induction hypothesis shows that $u\in U_E^1U_D^{m-1}$ and we may therefore assume that $u\in U_D^{m-1}$.
 
 We have $U_D^{m-1}\cap (U_E^1U_D^m)=U_E^{m-1}U_D^m$ and hence the claim is equivalent to
 \[ \text{if } u\in U_D^{m-1} \text{ satisfies } \varpi _E u\varpi _E^{-1} \equiv u\pmod{U_E^{m-1}U_D^m}, \text{ then } u\in U_E^{m-1}U_D^m. \]
 If we set $u=1+\varpi _E^{m-1} x$ with $x\in \mathfrak{o}_D$ and express the claim in additive terms, it reduces to
 \[ \text{if } x\in \mathfrak{o}_D \text{ satisfies } \varpi _E^m x\varpi _E^{-1} \equiv \varpi _E^{m-1} x\pmod{\mathfrak{p} _E^{m-1} +\mathfrak{q}^m}, \text{ then } x\in \mathfrak{o} _E +\mathfrak{q} .\]
 Then the congruence is equivalent to $\varpi _E x\varpi _E^{-1} \equiv x\pmod{\mathfrak{o} _E +\mathfrak{q}}$. If we denote by $\overline{x}$ the image of $x$ in $k_D$, this means
 \[ \tr _{k_D/{k_E}} (\overline{x})-2\overline{x} =\overline{x} ^q-\overline{x} \in k_E, \]
 from which it certainly follows that $x \in \mathfrak{o}_E +\mathfrak{q}$, since $p\neq 2$.
 
 In view of the fact that $\# (U_D^1/U_E^1U_D^{[(n+1)/2]})=q^{(n-1)/2}$, the assertion about the number of double cosets is not difficult to show.
\end{proof}

\subsection{Reduction of representations of $\mathcal{W} _F$}
In order to compare the reduction of an irreducible admissible representation $\Pi$ of $D^{\times}$ and the reduction of the two-dimensional representation $R$ of $\mathcal{W} _F$ which corresponds to $\Pi$ under LJLC and LLC, we compute the reduction of two-dimensional representations of $\mathcal{W} _F$ in this subsection. We first take up those representations corresponding to tamely ramified representations of $D^{\times}$ under these correspondences and then the other representations in the case where $p=2$. Let us denote the set of isomorphism classes of two-dimensional irreducible smooth representations of $\mathcal{W} _F$ over $\mathbb{C}\cong \overline{\mathbb{Q}}_p$ by $\mathcal{G} _2^0(F)$.

 \begin{Def}
  We define a subset $\mathcal{G} _2^{\emph{nr}}(F)$ of $\mathcal{G} _2^0(F)$ to be the set of isomorphism classes of two-dimensional irreducible representations $R$ of $\mathcal{W} _F$ such that there exists a non-trivial unramified character $\Phi$ of $F^{\times}$ with $(\Phi \circ \mathbf{a} _F) \otimes R \cong R$.
 \end{Def}
 \begin{Thm} \label{Thm:Galois}
  \begin{enumerate}
%
   \item Let $(E/F, \chi)$ be an admissible pair. Let us set $R_{\chi} =\Ind _{\mathcal{W} _E}^{\mathcal{W} _F} (\chi \circ \mathbf{a} _E)$.
   
         Then the isomorphism class of $R_{\chi}$ only depends on that of $(E/F, \chi)$ and the association $(E/F, \chi)$ $\mapsto$ $R_{\chi}$ induces a bijection
         \[\mathbb{P}_2(F) \simeq \mathcal{G}_2^0(F), \qquad \text{if} \ p \neq 2\]
         \[\mathbb{P}_2(F) \simeq \mathcal{G}_2^{\emph{nr}}(F), \qquad \text{if} \ p=2.\]
         If $(E/F, \chi)$ is an admissible pair, we have
         \begin{enumerate}
          \item $\mathrm{det} R_{\chi}= (\chi |_{F^{\times}} \otimes \varkappa _{E/F}) \circ \mathbf{a} _{F}$,
          
                where $\varkappa _{E/F}$ is the character of $F^{\times}$ associated to the quadratic extension $E/F$ $($i.e. the non-trivial character of $F^{\times}$ trivial on $\mathrm{N} _{E/F} (E^{\times}))$,
          \item $R_{(\Phi \circ \mathrm{N} _{E/F} )\otimes \chi} =(\Phi \circ \mathbf{a} _F)\otimes R_{\chi},$
          \item $R_{\chi}$ defines a class in $\mathcal{G} _2^{\emph{nr}} (F)$ if and only if $E/F$ is unramified.
          \end{enumerate}
   \item If $p=2$, the set $\mathcal{G} _2^{\emph{nr}} (F)$ is the image of tamely ramified representations of $D^{\times}$ under the composite of LJLC and LLC.
 \end{enumerate}
\end{Thm}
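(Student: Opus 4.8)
The plan is to establish part 1 as the characteristic-zero counterpart of Proposition \ref{Prop:modGalois} — it is the Galois-side mirror of Theorem \ref{Thm:quaternion} — and then to deduce part 2 from part 1 together with the compatibility of LLC and LJLC with character twists.

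For part 1, well-definedness is formal: an $F$-isomorphism $j\colon E\to E'$ induces an isomorphism $\mathcal{W}_{E'}\xrightarrow{\sim}\mathcal{W}_E$ compatible with the Artin reciprocity maps, so $R_{\chi'}\cong R_\chi$ whenever $\chi=\chi'\circ j$. The bijectivity statement is proved as in Proposition \ref{Prop:modGalois}: Mackey's decomposition and Frobenius reciprocity show that $R_\chi$ is irreducible (condition 1 for admissible pairs gives $\chi\neq\chi^\theta$), that $R_\chi\cong R_{\chi'}$ forces $E\cong E'$ over $F$ and $\chi^\theta=\chi'$ for some $\theta$, and that every two-dimensional irreducible representation $R$ of $\mathcal{W}_F$ carrying a nontrivial unramified self-twist is induced from $\mathcal{W}_{E_0}$ with $E_0/F$ the unramified quadratic extension (this already settles the case $p=2$, where moreover every admissible pair has $E/F$ unramified, the only tamely ramified quadratic extension of $F$ being the unramified one). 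When $p\neq 2$ one invokes in addition the classical fact that every two-dimensional irreducible representation of $\mathcal{W}_F$ is imprimitive, i.e. induced from some quadratic extension; if the inducing pair so obtained fails admissibility — which by condition 2 can only happen when $E/F$ is ramified and $\chi|_{U_E^1}$ factors through $\mathrm{N}_{E/F}$ — one twists $\chi$ down to a level-zero character, checks that the resulting representation is itself induced from $E_0$, and uses that unramified admissible pair. These are the arguments of \cite{BH}, entirely parallel to what has already been done in the paper.

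The supplementary properties are short. For (a) one uses that the determinant of a representation induced from an index-two subgroup is the associated quadratic character times the inducing character composed with the transfer, and that under Artin reciprocity the transfer $\mathcal{W}_F^{\mathrm{ab}}\to\mathcal{W}_E^{\mathrm{ab}}$ is the inclusion $F^\times\hookrightarrow E^\times$; this gives $\det R_\chi=(\chi|_{F^\times}\otimes\varkappa_{E/F})\circ\mathbf{a}_F$. For (b), $\mathrm{N}_{E/F}\circ\mathbf{a}_E=\mathbf{a}_F|_{\mathcal{W}_E}$ yields $(\Phi\circ\mathrm{N}_{E/F})\circ\mathbf{a}_E=(\Phi\circ\mathbf{a}_F)|_{\mathcal{W}_E}$, and the projection formula gives $R_{(\Phi\circ\mathrm{N}_{E/F})\otimes\chi}=(\Phi\circ\mathbf{a}_F)\otimes R_\chi$. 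For (c), $R_\chi\in\mathcal{G}_2^{\mathrm{nr}}(F)$ means $(\Phi\circ\mathbf{a}_F)\otimes R_\chi\cong R_\chi$ for a nontrivial unramified $\Phi$, which by (b) and the injectivity already shown is equivalent to $(\Phi\circ\mathrm{N}_{E/F})\otimes\chi$ being $F$-isomorphic to $\chi$; an elementary norm-group computation shows this is possible with $\Phi$ unramified nontrivial exactly when $E/F$ is unramified — take $\Phi=\delta$ and note $\delta\circ\mathrm{N}_{E/F}=1$ in the unramified case, while in the ramified case $\mathrm{N}_{E/F}(E^\times)$ contains a uniformizer of $F$, so any unramified character trivial on it is trivial, and the $\theta\neq 1$ possibility is excluded the same way after squaring.

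Finally, for part 2 I would argue as follows. Both LLC and LJLC are compatible with character twists, $\mathrm{rec}(\pi\otimes(\phi\circ\det))=\mathrm{rec}(\pi)\otimes(\phi\circ\mathbf{a}_F)$ and $\mathrm{JL}(\pi\otimes(\phi\circ\det))=\mathrm{JL}(\pi)\otimes(\phi\circ\mathrm{Nrd})$ for any character $\phi$ of $F^\times$, so the resulting bijection $\mathcal{G}_2^0(F)\simeq\mathcal{A}_1^0(D)$ intertwines $\sigma\mapsto\sigma\otimes(\phi\circ\mathbf{a}_F)$ with $\Pi\mapsto\Pi\otimes(\phi\circ\mathrm{Nrd})$. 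Because $\phi\mapsto\phi\circ\mathbf{a}_F$ and $\phi\mapsto\phi\circ\mathrm{Nrd}$ send nontrivial unramified characters of $F^\times$ to nontrivial unramified characters of $\mathcal{W}_F$ and of $D^\times$ (here $\mathrm{Nrd}$ is surjective with $\mathrm{Nrd}(U_D)=U_F$), and because $\mathcal{G}_2^{\mathrm{nr}}(F)$ and $\mathcal{A}_1^{\mathrm{nr}}(D)$ are both defined by admitting such a self-twist, the bijection restricts to $\mathcal{G}_2^{\mathrm{nr}}(F)\simeq\mathcal{A}_1^{\mathrm{nr}}(D)$. For $p=2$ an element of $\mathcal{A}_1^0(D)$ is tamely ramified precisely when it is unramified, i.e. lies in $\mathcal{A}_1^{\mathrm{nr}}(D)$, so $\mathcal{G}_2^{\mathrm{nr}}(F)$ is exactly the image of the tamely ramified representations of $D^\times$. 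The one genuinely substantial step is the bijectivity in part 1 — above all its surjectivity, which for $p\neq 2$ depends on the imprimitivity of two-dimensional irreducible Weil representations and in general on the bookkeeping replacing a non-admissible inducing pair by an admissible one; part 2 itself is formal once the twist-compatibility of the two classical correspondences is taken as known.
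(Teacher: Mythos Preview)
The paper does not give its own argument here; its entire proof reads ``For 1, see \cite[34.1 Theorem]{BH}. For 2, see \cite[34.4 Tame Langlands Correspondence, \S 56]{BH}.'' Your sketch therefore reconstructs what those citations contain, and does so correctly in outline.

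One step, however, is compressed past the point of self-containment. Both your injectivity claim across different quadratic extensions (``$R_\chi\cong R_{\chi'}$ forces $E\cong E'$ over $F$'') and your argument for property~(c) in the totally ramified case (``the $\theta\neq 1$ possibility is excluded the same way after squaring'') tacitly require condition~2 in the definition of admissible pair, which you never invoke. Squaring in (c) only yields $\Phi^2\circ\mathrm{N}_{E/F}=1$, hence that $\Phi$ is the unramified quadratic character; to finish you must observe that $\Phi\circ\mathrm{N}_{E/F}$ is then trivial on $U_E^1$, whereas $\chi^\theta/\chi$ is \emph{nontrivial} on $U_E^1$ precisely because condition~2 says $\chi|_{U_E^1}$ does not factor through $\mathrm{N}_{E/F}$ when $E/F$ is ramified. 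The same observation is what rules out a representation attached to a ramified admissible pair also being induced from the unramified quadratic extension, i.e.\ what gives injectivity across different $E$'s (Mackey and Frobenius alone only handle $E=E'$). With this made explicit your sketch is complete; your argument for part~2 via twist-compatibility of LLC and LJLC is correct and is exactly what the paper's citations encode.
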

\begin{proof}
 For 1, see \cite[34.1 Theorem]{BH}. For 2, see \cite[34.4 Tame Langlands Correspondence, \S 56]{BH}.
\end{proof}
We now compute the reduction of irreducible representations of $\mathcal{W}_F$.
\begin{Prop} \label{Prop:Galoisred}
 Let $(E/F, \chi)$ be an admissible pair.
 \begin{enumerate}
%
    \item The irreducible representation $R_{\chi}$ is $p$-integral if and only if $\chi$ is $p$-integral.
  \item Suppose that $\chi$ is $p$-integral. Then we have $\overline{R}_{\chi}^{\emph{ss}} \cong \rho _{\overline{\chi}}^{\emph{ss}}$. 

More explicitly, the reduction is described as follows.
        \begin{enumerate}
         \item Suppose that $E/F$ is unramified. Let us take $\zeta _E, \zeta _F \text{ and } \varpi _F$ as in \emph{Remark }$\ref{Rem:regchar}.\ref{unramnotation}$.

               The reduction $\overline{\chi}$ is regular if and only if $\chi (\zeta _E^{q-1})\neq 1$.
        
               If $\overline{\chi}$ is irregular, let $\varphi _1$ and $\varphi _2$ be mod $p$ characters of $F^{\times}$ such that
               \[ \varphi _1 (\zeta _F) =\overline{\chi} (\zeta _E), \ \varphi _1 (\varpi _F)^2=\overline{\chi} (\varpi _F) \text{ and } \varphi _2 =\varphi _1 \otimes \overline{\varkappa _{E/F}},\]
               where $\varkappa _{E/F}$ is the character associated to the quadratic extension $E/F$. 
               
               If $\overline{\chi}$ is regular, let us set $\xi =\overline{\chi}$.
         \item Suppose that $E/F$ is $($tamely$)$ totally ramified. Let $E_0/F$ be an unramified quadratic extension. Let us take $\zeta _{E_0}, \zeta _F, \varpi _E \text{ and } \varpi _F$ as in \emph{Remark }$\ref{Rem:regchar}.\ref{unramnotation}$ and $\ref{Rem:regchar}.\ref{tameramnotation}$.

               The reduction $\overline{\chi}$ is regular if and only if $\chi (-1)\neq 1$.
        
               If $\overline{\chi}$ is irregular, let $\varphi _1$ and $\varphi _2$ be mod $p$ characters of $F^{\times}$ such that
               \[ \varphi _1 (\zeta _F)^2=\overline{\chi} (\zeta _F), \ \varphi _1(-\varpi _F)=\overline{\chi} (\varpi _E) \text{ and }\varphi _2=\varphi _1 \otimes \overline{\varkappa _{E/F}},\]
               where $\varkappa _{E/F}$ is the character associated to the quadratic extension $E/F$. 
        
               If $\overline{\chi}$ is regular, let $\xi$ be a mod $p$ character of $E_0^{\times}$ such that
               \[ \xi (\zeta _{E_0})^2= \overline{\chi} (\zeta _F) \text{ and } \xi (\varpi _F)= \overline{\chi} (-1) ^{(q+1)/2} \overline{\chi} (\varpi _F). \]
                The mod $p$ character $\xi$ is regular.
        \end{enumerate}       
        Then we have
        \begin{equation*}
         \begin{split}
          \overline{R}_{\chi}^{\emph{ss}} &\cong (\rho _{\overline{\chi}})^{\emph{ss}} \\
                              &\cong \begin{cases}
                                      (\varphi _1 \circ \mathbf{a} _F) \oplus (\varphi _2 \circ \mathbf{a} _F)  \qquad &\text{if $\overline{\chi}$ is irregular,} \\ 
                                      \rho _{\xi} \qquad &\text{if $\overline{\chi}$ is regular.}
                                     \end{cases}
         \end{split}
        \end{equation*}
 \end{enumerate}
\end{Prop}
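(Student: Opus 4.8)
Since $R_\chi$ is literally the induction $\Ind_{\mathcal{W}_E}^{\mathcal{W}_F}(\chi\circ\mathbf{a}_E)$ and no minimality hypothesis intervenes, the whole statement should be deduced directly from Proposition~\ref{Prop:modGalois} and Remarks~\ref{Rem:regchar}, \ref{Rem:modGalois}. First I would settle $p$-integrality. If $\chi$ is $p$-integral, then $\chi\circ\mathbf{a}_E$ takes values in the units of the ring of integers of a suitable finite extension of $\mathbb{Q}_p$, hence stabilizes a rank-one lattice, and inducing that lattice up to $\mathcal{W}_F$ exhibits $R_\chi$ as $p$-integral. Conversely, if $R_\chi$ is $p$-integral then so is $\det R_\chi=(\chi|_{F^\times}\otimes\varkappa_{E/F})\circ\mathbf{a}_F$ (Theorem~\ref{Thm:Galois}); since $\varkappa_{E/F}$ has values in $\{\pm1\}$ this forces $\chi|_{F^\times}$ to be $p$-integral, and as $\chi$ is automatically $p$-integral on $\mathfrak{o}_E^\times$ (being of finite order there) it is $p$-integral. (Alternatively, restrict a stable lattice along $\mathcal{W}_E\hookrightarrow\mathcal{W}_F$, on which $R_\chi$ becomes $(\chi\circ\mathbf{a}_E)\oplus(\chi^\theta\circ\mathbf{a}_E)$.)

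Assume now $\chi$ is $p$-integral. The key step is the observation that, exactly as recorded for $D^\times$ in \ref{subsection:quaternionred} and valid here because $\mathcal{W}_E$ has finite index in $\mathcal{W}_F$, the reduction of an induced representation is the semisimplification of the induction of the reduction; combined with the obvious fact that the reduction of $\chi\circ\mathbf{a}_E$ is $\overline{\chi}\circ\mathbf{a}_E$, this gives at once $\overline{R}_\chi^{\mathrm{ss}}\cong(\Ind_{\mathcal{W}_E}^{\mathcal{W}_F}(\overline{\chi}\circ\mathbf{a}_E))^{\mathrm{ss}}=\rho_{\overline{\chi}}^{\mathrm{ss}}$, which is the first assertion of part~2.

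It then remains to make $\rho_{\overline{\chi}}^{\mathrm{ss}}$ explicit. I would first determine when $\overline{\chi}$ is regular: by Remark~\ref{Rem:regchar} this is controlled by the value of $\overline{\chi}$ at $\zeta_E^{q-1}$ (when $E/F$ is unramified) or at $-1$ (when $E/F$ is totally ramified, in which case $p\neq2$); since $\zeta_E^{q-1}$ has order $q+1$ and $-1$ has order $2$, both prime to $p$ in the relevant cases, the reduction map is injective on the corresponding values of $\chi$, and the criteria ``$\chi(\zeta_E^{q-1})\neq1$'' and ``$\chi(-1)\neq1$'' follow. When $\overline{\chi}$ is regular, $\rho_{\overline{\chi}}$ is already irreducible: in the unramified case it is $\rho_\xi$ with $\xi=\overline{\chi}$, and in the totally ramified case the first part of Remark~\ref{Rem:modGalois} rewrites it as $\rho_\xi$ for a mod $p$ character $\xi$ of $E_0^\times$ satisfying the displayed relations ($\xi$ being regular because $\rho_\xi$ is irreducible). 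When $\overline{\chi}$ is irregular, Proposition~\ref{Prop:modGalois} gives $\rho_{\overline{\chi}}^{\mathrm{ss}}\cong\lambda_1\oplus\lambda_2$, where the two extensions of $\overline{\chi}\circ\mathbf{a}_E$ to $\mathcal{W}_F$ differ by the character $\varkappa_{E/F}\circ\mathbf{a}_F$ of $\mathcal{W}_F/\mathcal{W}_E$; writing $\lambda_i=\varphi_i\circ\mathbf{a}_F$ and describing $\varphi_1$ by the second resp.\ third part of Remark~\ref{Rem:modGalois} (according as $E/F$ is unramified or ramified), with $\varphi_2=\varphi_1\otimes\overline{\varkappa_{E/F}}$, one obtains the stated formula.

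The argument is essentially bookkeeping once Proposition~\ref{Prop:modGalois} is in hand; the only points requiring care are the compatibility of reduction with induction and with composition by $\mathbf{a}_E$ (routine, as $\mathcal{W}_E$ has finite index), the injectivity of reduction on the pertinent roots of unity, and tracking the normalizations of Remarks~\ref{Rem:regchar} and \ref{Rem:modGalois}. The most error-prone spot will be the corner case $p=2$, $E/F$ unramified, $\overline{\chi}$ irregular: there one must check that the relation $\varphi_2=\varphi_1\otimes\overline{\varkappa_{E/F}}$ degenerates to $\varphi_2=\varphi_1$ (the reduction of the unramified quadratic character being trivial in characteristic $2$), consistently with $\rho_{\overline{\chi}}^{\mathrm{ss}}$ being $(\varphi_1\circ\mathbf{a}_F)^{\oplus2}$.
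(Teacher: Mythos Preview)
Your proposal is correct and follows exactly the approach the paper takes: the paper's proof consists of the single sentence ``Applying Proposition~\ref{Prop:modGalois} and Remark~\ref{Rem:modGalois}, we obtain the result without much difficulty,'' and your write-up supplies precisely those details (compatibility of reduction with finite induction, the regularity criterion from Remark~\ref{Rem:regchar}, and the explicit identifications from Remark~\ref{Rem:modGalois}).
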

\begin{proof}
 Applying Proposition \ref{Prop:modGalois} and Remark \ref{Rem:modGalois}, we obtain the result without much difficulty. 
\end{proof}

Next we compute the reduction of the remaining representations. We are going to use some facts about two-dimensional primitive representations of $\mathcal{W}_F$ as summarized in \cite[\S 41, \S42]{BH}.

Assume that $p=2$ and let $R$ represent an element in $\mathcal{G} _2^0(F) \setminus \mathcal{G} _2^{\text{nr}}(F)$. 
\begin{itemize}
 \item Let us denote the group of characters $\Phi$ of $F^{\times}$ such that $(\Phi \circ \mathbf{a}_F) \otimes R\cong R$ by $\mathfrak{T}(R)$.
         Then the order of $\mathfrak{T}(R)$ is either $1$, $2$ or $4$.
         There exists a non-trivial (order-two) character $\varkappa \in \mathfrak{T}(R)$ if and only if $R$ is induced from a character of $\mathcal{W}_E$, where $E/F$ is the quadratic extension associated to the character $\varkappa$.
         Thus $R$ is said to be simply imprimitive (resp. triply imprimitive) if $\mathfrak{T}(R)$ consists of two (resp. four) elements whereas $R$ is said to be primitive if $\mathfrak{T}(R)$ is trivial.
 \item Assume that $R$ is primitive. Then up to isomorphism there exists a unique cubic extension $K/F$ such that $R_K =\Res_{\mathcal{W}_K} ^{\mathcal{W}_F} R$ is imprimitive.
         Let $L$ be the normal closure of $K/F$. Then $R_L =\Res_{\mathcal{W}_L} ^{\mathcal{W}_F} R$ is triply imprimitive.
         Let ${M_i}/L (i=1, 2, 3)$ denote the three quadratic extensions such that $\varkappa _{{M_i}/L} \in \mathfrak{T}(R_L)$ and $M$ be the composite field of ${M_i}/F (i=1, 2, 3)$. Then $M$ is the normal closure of ${M_1}/F$.

         If $K/F$ is Galois, then $\mathrm{Gal}(M/F)$ is isomorphic to the alternating group $A_4$ and $R$ is said to be tetrahedral.

         If $K/F$ is not Galois, then $R_K$ is simply imprimitive, $\mathrm{Gal}(M/F)$ is isomorphic to the symmetric group $S_4$ and $R$ is said to be octahedral.
         Let $E/F$ be the maximal unramified subextension of $L/F$. Then $R_E =\Res_{\mathcal{W}_E} ^{\mathcal{W}_F} R$ is tetrahedral with $L/E$ totally ramified.
\end{itemize}
We shall use the notation in the above remark freely in what follows.
\begin{Thm}\label{Thm:Galoisred wild}
 Assume that $p=2$ and let $R$ represent an element in $\mathcal{G} _2^0(F) \setminus \mathcal{G} _2^{\emph{nr}}(F)$. Take the character $\Phi$ of $F^{\times}$ such that $\mathrm{det}(R) =\Phi \circ \mathbf{a}_F$.

  \begin{enumerate}
    \item The irreducible representation $R$ is $p$-integral if and only if $\Phi$ is $p$-integral.
    \item Suppose that $\Phi$ is $p$-integral. Let $\overline{\Phi}$ denote the reduction of $\Phi$.
        \begin{enumerate}
          \item Suppose that $R$ is imprimitive. Then the reduction $\overline{R} ^{\emph{ss}}$ is the direct sum of two identical mod $p$ characters.
                 
                  More precisely, let $\varphi$ be the mod $p$ character of $F^{\times}$ such that $\varphi ^2=\overline{\Phi}$. Then we have
                  \[ \overline{R}^{\emph{ss}} \cong (\varphi \circ \mathbf{a}_F)^{\oplus 2}. \]
          \item Suppose that $R$ is tetrahedral. Then the reduction $\overline{R}^{\emph{ss}}$ is the direct sum of two distinct mod $p$ characters.
                  The two mod $p$ characters are determined by requiring
                  that they are equal when restricted on $\mathcal{W}_K$ and the ratio defines a non-trivial mod $p$ character of $\mathrm{Gal}(K/F)$.
                 
                  More explicitly, let $\varphi$ be the mod $p$ character of $F^{\times}$ such that $\varphi ^2=\overline{\Phi}$ and let $\eta$ be a non-trivial mod $p$ character of $\mathrm{Gal}(K/F)$.
                  Set $\varphi _1=\varphi \eta$ and $\varphi _2=\varphi \eta ^2$. Then we have 
                  \[ \overline{R}^{\emph{ss}} \cong (\varphi _1 \circ \mathbf{a}_F)\oplus (\varphi _2 \circ \mathbf{a}_F). \]
          \item Suppose that $R$ is octahedral. $(\text{Then, } q \equiv -1 \pmod 3.)$ Then the reduction $\overline{R}^{\emph{ss}}$ is irreducible.

                  More precisely, let $\xi '$ be the mod $p$ character of $E^{\times}$ such that $(\xi ')^2=\overline{\Phi} \circ \mathrm{N}_{E/F}$ 
                  and let $\eta$ be a non-trivial mod $p$ character of $\mathrm{Gal}(L/E)$.
                  $($Taking $\zeta _E, \zeta _F \text{ and } \varpi _F$ as in \emph{Remark }$\ref{Rem:regchar}.\ref{unramnotation}$, we have
                  $\xi' (\zeta _E)^2 =\overline{\Phi} (\zeta _F), \xi' (\varpi _F)=\overline{\Phi} (\varpi _F), \eta (\varpi _E)=1 \text{ and } \eta (\zeta _E)$ is a primitive third root of unity.$)$
                  Set $\xi =\xi '\eta$. 

                  Then we have 
                  \[ \overline{R}^{\emph{ss}} \cong \rho _{\xi}. \]
           \end{enumerate}
 \end{enumerate}   
\end{Thm}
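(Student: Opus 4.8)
The plan is to reduce everything to Proposition~\ref{Prop:modGalois} by exploiting two facts used freely above — reduction commutes with finite induction and with character twists — together with the structure theory recalled just before the statement (following \cite[\S41--42]{BH}): each $R$ of the three types becomes \emph{imprimitive}, i.e.\ of the form $\Ind$ of a character, after restriction to the explicitly described subextension ($E/F$ itself in case~(a), the cubic $K/F$ in case~(b), the unramified $E/F$ in case~(c), where $R_E$ is tetrahedral). Throughout I will use that squaring is an automorphism of the group of mod $2$ characters of a group, so $\overline\Phi$ has a unique ``square root'' $\varphi$.

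For the $p$-integrality assertion, ``$\Rightarrow$'' is the determinant. For ``$\Leftarrow$'', I would first settle the imprimitive case: there $R=\Ind_{\mathcal W_E}^{\mathcal W_F}(\chi\circ\mathbf a_E)$ with $E/F$ \emph{totally ramified} (an unramified $E$ would force $R\in\mathcal G_2^{\mathrm{nr}}(F)$, by twisting with the unramified quadratic character), and since roots of unity are $p$-adic units and $\chi(\varpi_E)^2$ differs from $\chi|_{F^\times}(\varpi_F)$ by a unit, $\chi$ is $p$-integral whenever $\chi|_{F^\times}$, equivalently $\det R$, equivalently $\Phi$, is; then $R=\Ind(\chi)$ is $p$-integral because induction preserves $p$-integrality. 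The tetrahedral (resp.\ octahedral) case follows by restricting to $\mathcal W_K$ (resp.\ to $\mathcal W_E$, then invoking the tetrahedral case over $E$): one obtains an invariant lattice over a finite-index \emph{normal} subgroup, and averaging it over the finite quotient produces an $\mathcal W_F$-invariant lattice.

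Now assume $\chi$ (equivalently $\Phi$) is $p$-integral. In case~(a), $\overline R^{\mathrm{ss}}\cong(\Ind_{\mathcal W_E}^{\mathcal W_F}(\overline\chi\circ\mathbf a_E))^{\mathrm{ss}}=\rho_{\overline\chi}^{\mathrm{ss}}$; as $p=2$ and $E/F$ is totally ramified, $\overline\chi$ is irregular (Remark~\ref{Rem:regchar}), so Proposition~\ref{Prop:modGalois}(2) gives $\rho_{\overline\chi}^{\mathrm{ss}}\cong\lambda^{\oplus2}$ with $\lambda$ the unique extension of $\overline\chi\circ\mathbf a_E$ to $\mathcal W_F$; writing $\lambda=\varphi\circ\mathbf a_F$, the identity $\det\overline R^{\mathrm{ss}}=\overline\Phi\circ\mathbf a_F$ forces $\varphi^2=\overline\Phi$, which pins $\varphi$ down. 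In case~(b): restricting to $\mathcal W_K$, the representation $R|_{\mathcal W_K}$ is triply imprimitive, and a suitable character twist of it takes values in the quaternion group $Q_8$ lying over $\mathrm{Gal}(M/K)=V_4$; since $Q_8$ is a $2$-group its reduction is scalar, so $\overline R^{\mathrm{ss}}|_{\mathcal W_K}$ is scalar, the projective image of $\overline R^{\mathrm{ss}}$ is cyclic of order dividing $3$, and hence $\overline R^{\mathrm{ss}}=(\varphi_1\circ\mathbf a_F)\oplus(\varphi_2\circ\mathbf a_F)$ with $\varphi_1\varphi_2=\overline\Phi$ and $\varphi_1/\varphi_2$ a character of $\mathrm{Gal}(K/F)$; this last character is \emph{non-trivial} because a scalar $\overline R^{\mathrm{ss}}$ would have Brauer character $2=0$ at a $2$-regular lift of a $3$-cycle of $\mathrm{Gal}(M/F)=A_4$, whereas the ordinary trace there is a nonzero root of unity. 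Thus $\{\varphi_1,\varphi_2\}=\{\varphi\eta,\varphi\eta^2\}$ as claimed. In case~(c): restricting to the unramified $E/F$, the representation $R_E$ is tetrahedral with $L/E$ totally ramified, so by~(b) $\overline{R_E}^{\mathrm{ss}}$ is a sum of two \emph{distinct} characters of $\mathcal W_E$, namely $\xi'\eta$ and $\xi'\eta^2$ (via $\mathbf a_E$), with $(\xi')^2=\overline\Phi\circ\mathrm N_{E/F}$ and $\eta$ a non-trivial character of $\mathrm{Gal}(L/E)$; since $R$ is not induced from $\mathcal W_E$ (as $\mathfrak T(R)$ is trivial), $\mathrm{Gal}(E/F)$ must \emph{interchange} these two characters — the alternative that $\overline R^{\mathrm{ss}}$ splits over $\mathcal W_F$ is excluded by restricting further to the non-Galois cubic $K$, where $R_K$ is only simply imprimitive, and chasing characters of $\mathrm{Gal}(L/F)=S_3$ — hence $\overline R^{\mathrm{ss}}\cong\Ind_{\mathcal W_E}^{\mathcal W_F}(\xi'\eta\circ\mathbf a_E)=\rho_\xi$ with $\xi=\xi'\eta$ (automatically regular). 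The displayed values of $\xi'$ and $\eta$ on $\zeta_E,\varpi_F,\varpi_E$ come from functoriality of $\mathbf a$ together with $\mathrm N_{E/F}(\zeta_E)=\zeta_F$ and $\mathrm N_{E/F}(\varpi_F)=\varpi_F^2$.

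The main difficulty lies in the two primitive cases: pinning down whether $\overline R^{\mathrm{ss}}$ is irreducible or splits as a sum of two characters, and which ones. This forces a careful comparison of the reduction behaviour of the finite projective images against their internal structure — the lift $Q_8$ of the Klein four $V_4$ is a $2$-group and reduces to scalars, whereas the quotients $A_4/V_4\cong\mathbb Z/3$ and $S_4/V_4\cong S_3\cong GL_2(\mathbb F_2)$ survive reduction (the former contributing the two distinct cubic characters, a split reduction; the latter acting irreducibly) — after which the determinant and the restrictions to the intermediate fields determine the constituents exactly. The imprimitive case, by contrast, is essentially immediate from Proposition~\ref{Prop:modGalois}.
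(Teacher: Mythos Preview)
Your proof is correct and follows essentially the same strategy as the paper's: settle the imprimitive case via Proposition~\ref{Prop:modGalois} (with $E/F$ totally ramified and $p=2$ forcing $\overline\chi$ irregular), then reduce the tetrahedral and octahedral cases to it by restricting to the distinguished subfields $K$ and $E$; for case~(c) the paper says only that the two restrictions to $\mathcal W_K$ and $\mathcal W_E$ together determine $\overline R^{\mathrm{ss}}$, which is exactly your $S_3$-character chase spelled out.

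The only real difference is in the mechanics of case~(b). The paper obtains the splitting of $\overline R^{\mathrm{ss}}$ into characters by Frobenius reciprocity applied to $\overline{R_K}^{\mathrm{ss}}$ (rather than via your projective-image observation that a twist lands in the $2$-group $Q_8$), and establishes the distinctness of the two summands by a direct eigenvalue argument: for $x\in\mathcal W_F\setminus\mathcal W_K$ one has $x^3\in\mathcal W_M$, so the eigenvalues of $R(x)$ differ by a cube root of unity, and $R(x)$ scalar would force $R$ reducible. This is equivalent in content to your Brauer-character comparison, but your phrase ``Brauer character $2=0$'' is garbled: Brauer characters live in characteristic zero, so $2\neq 0$ there. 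What you mean is that if $\overline R^{\mathrm{ss}}$ were isotypic its Brauer character at a $2$-regular $3$-cycle would have absolute value~$2$, contradicting the fact that the ordinary trace there is a root of unity (the latter because the projective image of $R$ is $A_4$, so $R(x)$ is genuinely non-scalar).
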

\begin{proof}
 We can easily check the assertion about $p$-integrality.

 First assume that $\Phi$ is $p$-integral and $R$ is imprimitive.
 If we express $R$ as $R\cong \Ind_{\mathcal{W}_E} ^{\mathcal{W}_F} \chi$, then we have $\overline{R}^{\text{ss}} \cong \rho _{\overline{\chi}}$. 
 As noted in Remark \ref{Rem:regchar}, $\overline{\chi}$ is irregular and the assertion follows by Proposition \ref{Prop:modGalois}.
 
 Next assume that $\Phi$ is $p$-integral and $R$ is tetrahedral.
 As $K/F$ is Galois and the restriction $\left( \Res_{\mathcal{W}_K} ^{\mathcal{W}_F} \overline{R}^{\text{ss}} \right) ^{\text{ss}} \cong \overline{R_K} ^{\text{ss}}$ is the direct sum of two mod $p$ characters by the previous case,
 we can show by Frobenius reciprocity that the reduction $\overline{R}^{\text{ss}}$ is the direct sum of two mod $p$ characters.
 On the other hand, we readily verify that $R_M=\Res_{\mathcal{W}_M} ^{\mathcal{W}_F} R$ is the direct sum of two identical characters.
 Take an element $x\in \mathcal{W}_F \setminus \mathcal{W}_K$. Since $x^3$ belongs to $\mathcal{W}_M$ and thus the operator $R(x)^3$ is a scalar, the two eigenvalues of $R(x)$ only differ by a third root of unity.
 If $R(x)$ were also a scalar, a simple calculation relying on the fact that $\mathrm{Gal}(M/F)$ is isomorphic to the symmetric group $S_4$ would show that $R$ itself would be the direct sum of characters, which contradicts the irreducibility. Therefore $\overline{R}^{\text{ss}}$ is the direct sum of two distinct mod $p$ characters.

 Finally assume that $\Phi$ is $p$-integral and $R$ is octahedral.
 Then the previous two cases determine $\left( \Res_{\mathcal{W}_K} ^{\mathcal{W}_F} \overline{R} ^{\text{ss}} \right) ^{\text{ss}}$ and $\left( \Res_{\mathcal{W}_E} ^{\mathcal{W}_F} \overline{R} ^{\text{ss}} \right) ^{\text{ss}}$, which together determine $\overline{R} ^{\text{ss}}$. 

 The explicit description of the reduction in each cases is seen by simple computations.
\end{proof}

\section{Compatibility with the local Langlands and Jacquet-Langlands correspondence} \label{section:compatibility}
\subsection{Review of tame correspondences} \label{subsec:corres}
 By Theorems \ref{Thm:Galois} and \ref{Thm:quaternion}, a certain subset of $\mathcal{G} _2^0(F)$ (namely, $\mathcal{G} _2^0(F)$ if $p\neq2$ and $\mathcal{G} _2^{\emph{nr}}(F)$ if $p=2$) and a certain subset of $\mathcal{A}_1^0(D)$ (namely, $\mathcal{A}_1^0(D)$ if $p\neq2$ and $\mathcal{A}_1^{\emph{nr}}(D)$ if $p=2$) are parametrized by the same set $\mathbb{P} _2(F)$ and therefore there exists a natural bijection between them. However, this is not (the restriction of) the composite of LLC and LJLC. We need to introduce a permutation of $\mathbb{P} _2(F)$ to obtain the right correspondence.
 
\begin{Thm}
 For any admissible pair $(E/F, \chi)$, there exists a canonical tamely ramified character $\Delta _{\chi}$ of $E^{\times}$ $($\emph{i.e.} $\Delta _{\chi}$ is trivial on $U_E^1)$ such that the association $R_{\chi} \mapsto \Pi _{\Delta _{\chi} \otimes \chi}$ induces the composite of LLC and LJLC.
 
 If $E/F$ is unramified, $\Delta _{\chi}$ is the unramified character sending any uniformizer to $-1$.

 If $E/F$ is $($tamely$)$ totally ramified, we have
\[ \Delta _{\chi} |_{U_E^1} =1, \Delta _{\chi} |_{F^{\times}} =\varkappa _{E/F}, \]
 where $\varkappa _{E/F}$ denotes the character of $F^{\times}$ associated to the quadratic extension $E/F$.
 
 $($The definition of $\Delta _{\chi} (\varpi _E)$ for a uniformizer $\varpi _E$ is complicated and we omit the details.$)$

 In any case, $\Delta _{\chi}$ has order four $($if $E/F$ is totally ramified and $q\equiv 3\pmod4)$ or order two $($otherwise$)$. In particular, if $\chi$ is $p$-integral, then $\Delta _{\chi} \otimes \chi$ is $p$-integral.
\end{Thm}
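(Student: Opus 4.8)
The plan is to obtain the statement by composing the explicit description of the tame local Langlands correspondence for $\mathrm{GL}_2(F)$ with the local Jacquet--Langlands correspondence, both available in \cite{BH}, and then to read off the properties of the twisting character from the explicit formula for the tame rectifier. First I would invoke \cite[\S 34]{BH}: to every admissible pair $(E/F,\chi)$ with values in $\mathbb{C}$ it attaches a canonical tamely ramified character of $E^{\times}$, the tame rectifier, which it computes explicitly, and it shows that LLC carries the cuspidal representation of $\mathrm{GL}_2(F)$ attached to $(E/F,\chi)$ to $\Ind_{\mathcal{W}_E}^{\mathcal{W}_F}(\psi\circ\mathbf{a}_E)$, where $\psi$ is $\chi$ twisted by that rectifier. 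The structural point I would check against \cite{BH} is that the rectifier of $(E/F,\chi)$ depends on $\chi$ only through invariants (essentially $E/F$, the level, and $\chi|_{U_E^1}$) left fixed by tamely ramified twists; this makes the assignment $(E/F,\chi)\mapsto(E/F,\Delta_{\chi}\otimes\chi)$ a well-defined bijection of $\mathbb{P}_2(F)$, so that every $R_{\chi}$ is indeed an LLC-image.

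Next I would compose with LJLC. The parametrisation $\mathbb{P}_2(F)\simeq\mathcal{A}_1^0(D)$ of Theorem \ref{Thm:quaternion} is the one obtained by adapting to $D^{\times}$ the construction of cuspidal representations of $\mathrm{GL}_2(F)$ in \cite{BH}, and is normalised so as to be interchanged with the $\mathrm{GL}_2(F)$-parametrisation by LJLC (cf. \cite[\S 56]{BH}; the sign $\tr\Lambda(\zeta)=-\chi(\zeta)$ in Proposition \ref{Prop:Heisenberg} is part of this normalisation). Hence the LLC-preimage of $R_{\chi}$ is the cuspidal representation of $\mathrm{GL}_2(F)$ attached to the admissible pair $(E/F,\Delta_{\chi}^{-1}\otimes\chi)$, and LJLC carries it to $\Pi_{\Delta_{\chi}^{-1}\otimes\chi}$. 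Once $\Delta_{\chi}$ is shown to have order dividing four, $\Delta_{\chi}^{-1}$ has the same restrictions to $U_E^1$ and $F^{\times}$ and the same order, so after replacing $\Delta_{\chi}$ by its inverse if necessary we obtain the asserted form $R_{\chi}\mapsto\Pi_{\Delta_{\chi}\otimes\chi}$.

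The explicit description is then read off from \cite[\S 34]{BH}. Since an admissible pair has $E/F$ tamely ramified quadratic, for $p=2$ only the unramified case occurs; there $\Delta_{\chi}$ is the unramified character of $E^{\times}$ of order two, the one sending every uniformiser to $-1$, whose reduction modulo $p$ is the character $\delta$ of Section \ref{subsection:modpcor} (the compatibility that motivated the mod $p$ correspondence). In the totally ramified case, which forces $p$ odd, \cite{BH} gives $\Delta_{\chi}|_{U_E^1}=1$ and $\Delta_{\chi}|_{F^{\times}}=\varkappa_{E/F}$, the remaining value $\Delta_{\chi}(\varpi_E)$ being governed by a more intricate formula I would cite rather than reproduce. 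For the order I would argue directly: $e(E|F)=2$ gives $\mu_E=\mu_F$, so $\Delta_{\chi}|_{\mu_E}=\varkappa_{E/F}|_{\mu_F}$ has order dividing two, while for a uniformiser $\varpi_E$ with $\varpi_E^2\in F^{\times}$ one has $\mathrm{N}_{E/F}(\varpi_E)=-\varpi_E^2$, so that
\[
\Delta_{\chi}(\varpi_E)^2=\Delta_{\chi}(\varpi_E^2)=\varkappa_{E/F}(\varpi_E^2)=\varkappa_{E/F}\bigl(-\mathrm{N}_{E/F}(\varpi_E)\bigr)=\varkappa_{E/F}(-1)=(-1)^{(q-1)/2}.
\]
Thus $\Delta_{\chi}$ has order two when $q\equiv1\pmod4$ and order four when $q\equiv3\pmod4$. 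In all cases $\Delta_{\chi}$ has finite order, hence takes values in roots of unity of $\overline{\mathbb{Q}}_p^{\times}$, which are units in the ring of integers with non-zero reduction; therefore $\Delta_{\chi}$ and $\Delta_{\chi}^{-1}$ are $p$-integral, and $\Delta_{\chi}\otimes\chi$ is $p$-integral if and only if $\chi$ is.

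The main obstacle is not any new argument but the bookkeeping of normalisations: pinning down the direction in which the rectifier is applied, confirming that the passage through $\mathrm{GL}_2(F)$ introduces no correction beyond the sign already present in the parametrisation of Theorem \ref{Thm:quaternion}, and checking that the case $p=2$ --- where $\mathcal{G}_2^{\mathrm{nr}}(F)$ and $\mathcal{A}_1^{\mathrm{nr}}(D)$ are the relevant sets and $E/F$ is necessarily unramified --- falls out of the same reasoning.
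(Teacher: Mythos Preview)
Your approach is essentially the paper's: both reduce the statement to \cite[\S 34, \S 56]{BH}, and your added direct computation of the order of $\Delta_\chi$ in the totally ramified case (via $\Delta_\chi(\varpi_E)^2=\varkappa_{E/F}(-1)=(-1)^{(q-1)/2}$) is a clean supplement the paper omits.

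One point deserves correction. You assert that the $D^\times$-parametrisation of Theorem~\ref{Thm:quaternion} is normalised so that LJLC intertwines it with the $\mathrm{GL}_2(F)$-parametrisation of \cite{BH}, so that no further adjustment is needed beyond possibly inverting the rectifier. The paper's own proof says otherwise: in the totally ramified case the character $\Delta_\chi$ here and the rectifier in \cite{BH} differ by a sign at $\varpi_E$, and this discrepancy is attributed precisely to LJLC. Replacing the rectifier by its inverse is not the same operation as flipping the sign at $\varpi_E$ (when $q\equiv 1\pmod 4$ and $\Delta_\chi(\varpi_E)=\pm 1$, inversion does nothing while the sign flip does change the character). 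This does not affect any of the properties actually asserted in the theorem --- the restrictions to $U_E^1$ and $F^\times$ and the order are insensitive to this sign --- but your sentence ``the passage through $\mathrm{GL}_2(F)$ introduces no correction beyond the sign already present in the parametrisation of Theorem~\ref{Thm:quaternion}'' is not quite right, and your fallback of inverting $\Delta_\chi$ is not the correct fix. You correctly flag this as the main bookkeeping obstacle; the resolution is simply to absorb the LJLC sign into the definition of $\Delta_\chi$, as the paper does.
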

\begin{proof}
 See \cite[\S 34, \S 56]{BH}. Note, however, that if $E/F$ is totally ramified $\Delta _{\chi} (\varpi _E)$ in \cite{BH} and $\Delta _{\chi} (\varpi _E)$ here is slightly different (in fact, they are equal up to sign) due to LJLC.
\end{proof}
 
\subsection{Compatibility in level zero}
 In the following two subsections we are going to compare the reduction of a representation $R$ of $\mathcal{W} _F$ and the reduction of the representation $\Pi$ of $D^{\times}$ which corresponds to $R$ under LLC and LJLC. Our hope for a possible connection with the mod $p$ correspondence failed (at least in a straightforward way) in most cases.
 
 We begin with a positive result. Note that this is due to Vign\'eras \cite{Vi2}.
 \begin{Thm}
  Let $(E/F, \chi)$ be an admissible pair. Suppose that $\chi$ is $p$-integral and that the level of $\chi$ is zero. $($In particular, $E/F$ is unramified.$)$ 
    
  Then $\overline{\chi}$ and $\overline{\Delta _{\chi} \otimes \chi}$ are regular. The mod $p$ representations $\overline{R}_{\chi} ^{\emph{ss}} \cong \rho _{\overline{\chi}}$ and $\overline{\Pi} _{\Delta _{\chi} \otimes \chi} ^{\emph{ss}} \cong \pi _{\delta \otimes \overline{\chi}}$ are irreducible and correspond under the mod $p$ correspondence..
  
 \end{Thm}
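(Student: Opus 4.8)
The plan is to obtain the theorem by assembling the structural results already proved, in three steps: regularity of the mod $p$ characters involved, identification of the two reductions, and the matching with the mod $p$ correspondence. There is no genuinely new input needed; the work is bookkeeping.

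First I would settle regularity. Since the level of $\chi$ is zero, $\chi$ is trivial on $U_E^1$, hence factors through $E^\times/U_E^1$ and is determined by $\chi|_{\mu_E}$ together with its value at a uniformizer. For $E/F$ unramified this uniformizer may be taken in $F^\times$, so it is $\mathrm{Gal}(E/F)$-fixed, and the non-trivial $\theta\in\mathrm{Gal}(E/F)$ acts on $\mu_E$ by $\zeta\mapsto\zeta^q$; therefore admissibility condition $1$, equivalently $\chi\neq\chi^\theta$ (the Remark following the definition of admissible pairs), is equivalent to $\chi(\zeta_E^{q-1})\neq 1$ for a generator $\zeta_E$ of $\mu_E$. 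As $\zeta_E^{q-1}$ has order prime to $p$, reduction preserves this, so $\overline\chi(\zeta_E^{q-1})\neq 1$, which by Remark \ref{Rem:regchar} (or directly by Proposition \ref{Prop:Galoisred}.2(a)) means $\overline\chi$ is regular. Since $\Delta_\chi$ is unramified it is trivial on $\mu_E$, so $(\Delta_\chi\otimes\chi)(\zeta_E^{q-1})=\chi(\zeta_E^{q-1})\neq 1$ and the same argument gives that $\overline{\Delta_\chi\otimes\chi}$ is regular. Moreover $\Delta_\chi$ reduces to $\delta$ (both are unramified sending uniformizers to $-1$, and both become trivial when $p=2$), so $\overline{\Delta_\chi\otimes\chi}=\delta\otimes\overline\chi$.

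Next I would compute the two reductions. By Proposition \ref{Prop:Galoisred}, $\overline{R}_\chi^{\mathrm{ss}}\cong\rho_{\overline\chi}^{\mathrm{ss}}$, and since $\overline\chi$ is regular, Proposition \ref{Prop:modGalois} shows $\rho_{\overline\chi}$ is already irreducible, so $\overline{R}_\chi^{\mathrm{ss}}\cong\rho_{\overline\chi}$. On the $D^\times$ side, the representation corresponding to $R_\chi$ under LLC and LJLC is $\Pi_{\Delta_\chi\otimes\chi}$ (review of tame correspondences), and $\Delta_\chi\otimes\chi$ again has level zero, so the $n=0$ branch of the computation in Theorem \ref{Thm:quaternionred} applies: there the Heisenberg construction is trivial and $\overline\Lambda^{\mathrm{ss}}=\overline\chi^{\dagger}$, giving $\overline{\Pi}_{\Delta_\chi\otimes\chi}^{\mathrm{ss}}\cong\pi_{\overline{\Delta_\chi\otimes\chi}}=\pi_{\delta\otimes\overline\chi}$, which is irreducible by Proposition \ref{Prop:modquaternion} because $\delta\otimes\overline\chi$ is regular.

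Finally, the mod $p$ correspondence is by definition the bijection induced by $\rho_\xi\mapsto\pi_{\delta\otimes\xi}$, so taking $\xi=\overline\chi$ shows exactly that $\overline{R}_\chi^{\mathrm{ss}}=\rho_{\overline\chi}$ and $\overline{\Pi}_{\Delta_\chi\otimes\chi}^{\mathrm{ss}}=\pi_{\delta\otimes\overline\chi}$ correspond, as claimed. The only points requiring a little care are that the reduction of $\Delta_\chi$ is precisely $\delta$ (including the degenerate case $p=2$, where both are trivial) and that the level-zero hypothesis survives the twist by $\Delta_\chi$, so that the $n=0$ case of Theorem \ref{Thm:quaternionred} genuinely applies to $\Delta_\chi\otimes\chi$; neither is a real obstacle.
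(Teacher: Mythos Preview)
Your proposal is correct and follows exactly the paper's approach: the paper's proof is the one-liner ``Since $\overline{\Delta_\chi}=\delta$, this follows from Proposition~\ref{Prop:Galoisred} and Theorem~\ref{Thm:quaternionred},'' and your argument simply unpacks this, invoking the same results (together with Propositions~\ref{Prop:modquaternion} and~\ref{Prop:modGalois} for irreducibility, which the paper leaves implicit). Your additional care in verifying regularity explicitly via $\chi(\zeta_E^{q-1})\neq 1$ and noting that $\Delta_\chi\otimes\chi$ retains level zero is welcome but not a departure from the intended route.
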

 \begin{proof}
 Since $\overline{\Delta _{\chi}} =\delta$, this follows from Proposition \ref{Prop:Galoisred} and Theorem \ref{Thm:quaternionred}.
 \end{proof}

\subsection{Incompatibility in higher level}
 Except for the level zero case, the reduction of a representation $\Pi$ of $D^{\times}$ is much more involved than that of its correspondent. We present the following theorem for easy reference although it is lengthy and complicated. We conclude the paper with an observation concerning 
 this intricate result.
 \begin{Thm} \label{Thm:tameincomp}
  Let $(E/F, \chi)$ be an admissible pair with $\chi$ $p$-integral. Suppose that the level $m$ of $\chi$ is positive and set $n=2m/e(E/F)$.
  \begin{enumerate}
   \item \label{2}
         $($\emph{cf.} \emph{Proposition} $\ref{Prop:Galoisred}$, \emph{Theorem} $\ref{Thm:quaternionred}$, \emph{Remark} $\ref{Rem:quaternionunram})$
   
         Suppose that $n$ is even. $($Then $E/F$ is unramified.$)$ Let us set $E_{0}=E$. Let us take $\zeta _{E_0}$, $\zeta _F$ and $\varpi _{F}$ as in \emph{Remark }$\ref{Rem:regchar}.\ref{unramnotation}$.
   
         The mod $p$ character $\overline{\Delta _{\chi} \otimes \chi}$ is regular exactly when $\overline{\chi}$ is regular. They are regular if and only if $\chi (\zeta _{E_0})^{q-1} \neq 1$.
         
         If $\overline{\chi}$ is regular, $\overline{R_{\chi}} ^{\emph{ss}} \cong \rho _{\overline{\chi}}$ is irreducible, whereas if $\overline{\chi}$ is irregular, $\overline{R_{\chi}} ^{\emph{ss}}$ is a direct sum of two characters.
         
         Whether $\overline{\chi}$ is regular or not, we have
         \begin{equation*}
          \overline{\Pi} _{\Delta _{\chi} \otimes \chi} ^{\emph{ss}} \cong \begin{cases}
                                        \left(\pi _{\delta \otimes \overline{\chi}}^{\oplus \frac{q^{n/2}-q}{q+1}} \oplus \left(\bigoplus _{\tau \in X} \pi _{\delta \otimes \overline{\chi} \otimes \tau}^{\oplus \frac{q^{n/2}+1}{q+1}}\right)\right)^{\emph{ss}} \qquad & \text{if} \ n \equiv 2\pmod {4} \text{,} \\
                                        \left(\pi _{\delta \otimes \overline{\chi}}^{\oplus \frac{q^{n/2}+q}{q+1}} \oplus \left(\bigoplus _{\tau \in X} \pi _{\delta \otimes \overline{\chi} \otimes \tau}^{\oplus \frac{q^{n/2}-1}{q+1}}\right)\right)^{\emph{ss}} \qquad & \text{if} \ n \equiv 0\pmod {4} \text{,}
                                                             \end{cases}
         \end{equation*}
         with the notation of Theorem \ref{Thm:quaternionred}, \ref{Thm:unram}.

         More precisely, the following assertions hold true.
         \begin{enumerate}
          \item \label{a}
                Suppose that $\chi (\zeta _{E_0})^{q-1}\neq 1$.
          
                Regardless of the parity of $n/2$ $($even if $n=2)$, the image $\pi _{\delta \otimes \overline{\chi}}$ of $\rho _{\overline{\chi}}$ under the mod $p$ correspondence occurs in $\overline{\Pi} _{\Delta _{\chi} \otimes \chi} ^{\emph{ss}}$. If $n\equiv 2\pmod4$ $($resp. If $n\equiv 0\pmod4)$, its multiplicity is one less $($resp. more$)$ than that of any other two-dimensional irreducible factors.
                
                Any irreducible mod $p$ representations of $D^{\times}$ with a central character $(\delta \otimes \overline{\chi})|_{F^{\times}}$ occur in $\overline{\Pi} _{\Delta _{\chi} \otimes \chi} ^{\emph{ss}}$. If $p\neq 2$ and $\chi (-1)=-1$, there exist $(q+1)/2$ such representations and they are all two-dimensional. If $p\neq2$ $($resp. If $p=2)$ and $\\overline{\chi} (-1)=1$, there exist $(q-1)/2$ $($resp. $q/2)$ two-dimensional ones and four $($resp. only one$)$ one-dimensional ones of all such representations.                
          \item \label{b}
                Suppose that $\chi (\zeta _{E_0})^{q-1}=1$.
          
                The reduction $\overline{R_{\chi}} ^{\emph{ss}}$ is isomorphic to $(\varphi _1\circ \mathbf{a} _F) \oplus (\varphi _2\circ \mathbf{a} _F)$ with $\varphi _1$ and $\varphi _2$ satisfying
                \[ \varphi _1(\zeta _F)=\overline{\chi} (\zeta _{E_0}), \ \varphi _1(\varpi _F)^2=\overline{\chi} (\varpi _F) \text{ and } \varphi _2=\varphi _1\otimes \overline{\varkappa _{E_0/F}}, \]
                where $\varkappa _{E_0/F}$ is the character associated to the quadratic extension $E_{0}/F$.
                
                The representation $(\pi _{\delta \otimes \overline{\chi}})^{\emph{ss}}$ is isomorphic to $(\varphi _3\circ \mathrm{Nrd} ) \oplus (\varphi _4\circ \mathrm{Nrd} )$ with $\varphi _3$ and $\varphi _4$ satisfying
                \[ \varphi _3(\zeta _F)=\overline{\chi} (\zeta _{E_0}), \ \varphi _3(\varpi _F)^2=-\overline{\chi} (\varpi _F) \text{ and }
\varphi _4=\varphi _3\otimes \overline{\varkappa _{E_0/F}}. \]
                If $p\neq2$ and $\tau \in X$ satisfies $\tau (\zeta _{E_0})=-1$, $(\pi _{\delta \otimes \overline{\chi} \otimes \tau})^{\emph{ss}}$ is isomorphic to $(\varphi _5\circ \mathrm{Nrd} ) \oplus (\varphi _6\circ \mathrm{Nrd} )$ with $\varphi _5$ and $\varphi _6$ satisfying
                \[ \varphi _5(\zeta _F)=-\overline{\chi} (\zeta _{E_0}), \ \varphi _5(\varpi _F)^2=-\overline{\chi} (\varpi _F) \text{ and }
\varphi _6=\varphi _5\otimes \overline{\varkappa _{E_0/F}}. \]
                If $n\equiv 2\pmod4$ $($resp. If $n\equiv 0\pmod4)$ and $p\neq2$, the multiplicity of $(\varphi _3 \circ \mathrm{Nrd})$ and $(\varphi _4 \circ \mathrm{Nrd})$ is one less $($resp. more$)$ than that of $(\varphi _5 \circ \mathrm{Nrd})$ and $(\varphi _6 \circ \mathrm{Nrd})$.
                
                If $n\neq 2$, any irreducible mod $p$ representations of $D^{\times}$ with a central character $(\delta \otimes \overline{\chi})|_{F^{\times}}$ occur in $\overline{\Pi} _{\Delta _{\chi} \otimes \chi}^{\emph{ss}}$. If $p\neq2$ $($resp. If $p=2)$, there exist $(q-1)/2$ $($resp. $q/2)$ two-dimensional ones and four $($resp. only one$)$ one-dimensional ones $($described above$)$ of all such representations.

                If $n=2$, $(\varphi _3 \circ \mathrm{Nrd})$ and $(\varphi _4 \circ \mathrm{Nrd})$ do not occur.
         \end{enumerate}
   \item \label{3}
         $($\emph{cf.} \emph{Proposition} $\ref{Prop:Galoisred}$, \emph{Theorem} $\ref{Thm:quaternionred}$, \emph{Remark} $\ref{Rem:quaterniontameram})$
        
         Suppose that $n$ is odd. $($Then $E/F$ is totally ramified and $p\neq2.)$ Let $E_0/F$ be an unramified quadratic extension. Let us take $\zeta _{E_0}, \zeta _F, \varpi _E$ and $\varpi _F$ as in \emph{Remark }$\ref{Rem:regchar}.\ref{unramnotation}$ and $\ref{Rem:regchar}.\ref{tameramnotation}$.
         
         The mod $p$ character $\overline{\chi}$ is regular if and only if $\chi (-1)=-1$.
         
         If $\overline{\chi}$ is regular, $\overline{R_{\chi} }^{\emph{ss}}$ is irreducible, whereas if $\overline{\chi}$ is irregular, $\overline{R_{\chi}} ^{\emph{ss}}$ is a direct sum of two characters.
         
         We have
         \begin{equation*}
          \overline{\Pi} _{\Delta _{\chi} \otimes \chi} ^{\emph{ss}} \cong \left(\bigoplus _{\pi \in I_1} \pi ^{\oplus q^{({n-1})/2}}\right) \oplus \left(\bigoplus _{\tilde{\chi} \in I_2} \tilde{\chi} ^{\oplus \frac{q^{{(n-1)}/2}+1}{2}}\right) \oplus \left(\bigoplus _{\tilde{\chi} \in I_2} (\tilde{\chi} \otimes \tilde{\iota}) ^{\oplus \frac{q^{{(n-1)}/2}-1}{2}} \right),
         \end{equation*}
         with the notation of Theorem \ref{Thm:quaternionred}, \ref{Thm:ram}.
         The set $I_2$ of one-dimensional extensions of $\overline{\Delta_{\chi} \otimes \chi} ^{\ddagger}$ is empty if and only if
         \[ {-1\overwithdelims () q} \chi (-1)=-1,\]
         where the Jacobi symbol
         \begin{equation*}
          {-1\overwithdelims () q}=(-1)^{(q-1)/2}=\begin{cases}
                                                   1 \qquad &\text{ if } q\equiv 1\pmod 4 \\
                                                   -1 \qquad &\text{ if } q\equiv 3\pmod 4.
                                                  \end{cases}
         \end{equation*}
         More precisely, the following assertions hold true.
         \begin{enumerate}
          \item \label{c}
                Suppose that $\chi (-1)=-1$ and $q\equiv 1\pmod4$.
          
                We have $\overline{R_{\chi}} ^{\emph{ss}} \cong \rho _{\xi}$, where $\xi$ is a regular mod $p$ character of $E_{0}^{\times}$ such that
                \[ \xi (\zeta _{E_0})^2=\overline{\chi} (\zeta _F), \ \xi (\varpi _F)=-\overline{\chi} (\varpi _F). \]
                
                The set $I_2$ is empty and every element $\pi \in I_1$ is isomorphic to $\pi _{\nu}$ with $\nu$ satisfying
                \[ \nu (\zeta _{E_0})^{q+1}=-\overline{\chi} (\zeta _F), \ \nu (\varpi _F)=\overline{\chi} (\varpi _F). \]
                The image $\pi _{\delta \otimes \xi}$ of $\rho _{\xi}$ under the mod $p$ correspondence is in $I_2$.
                
                Any irreducible mod $p$ representations of $D^{\times}$ with a central character $(\overline{\Delta _{\chi} \otimes \chi})|_{F^{\times}}$ occur in $\overline{\Pi} _{\Delta _{\chi} \otimes \chi} ^{\emph{ss}}$ with equal multiplicities. There are $(q+1)/2$ such representations.
                
          \item \label{d}
                Suppose that $\chi (-1)=-1$ and $q\equiv 3\pmod4$.
          
                We have $\overline{R_{\chi}} ^{\emph{ss}} \cong \rho _{\xi}$, where $\xi$ is a regular mod $p$ character of $E_{0}^{\times}$ such that
                \[ \xi (\zeta _{E_0})^2=\overline{\chi} (\zeta _F), \ \xi (\varpi _F)=\overline{\chi} (\varpi _F). \]
                
                The set $I_2$ has two elements and every element $\tilde{\chi} \in I_2$ is isomorphic to $(\varphi \circ \mathrm{Nrd})$ with $\varphi$ satisfying
                \[ \varphi (\zeta _F)^2=-\overline{\chi} (\zeta _F), \ \varphi (\varpi _F)=\varphi (\zeta _F)^{(q-1)/2}\overline{\Delta _{\chi}} (\varpi _E)\overline{\chi} (\varpi _E). \]
                The set $I_1$ has $(q-1)/2$ elements and every element $\pi \in I_1$ is isomorphic to $\pi _{\nu}$ with $\nu$ satisfying
                \[ \nu (\zeta _{E_0})^{q+1}=-\overline{\chi} (\zeta _F), \ \nu (\zeta _{E_0})^2\neq -\overline{\chi} (\zeta _F) \text{ and } \nu (\varpi _F)=-\overline{\chi} (\varpi _F). \]
                The image $\pi _{\delta \otimes \xi}$ of $\rho _{\xi}$ under the mod $p$ correspondence is in $I_2$.
                
                If $n\neq 1$, any irreducible mod $p$ representations of $D^{\times}$ with a central character $(\overline{\Delta _{\chi} \otimes \chi} ^{\emph{ss}})|_{F^{\times}}$ occur in $\overline{\Pi} _{\Delta _{\chi} \otimes \chi}^{\emph{ss}}$. If $n=1$, one-dimensional mod $p$ representations of the form $\tilde{\xi} \otimes \tilde{\iota}$ with $\tilde{\xi} \in I_2$ do not occur.
                
          \item \label{e}
                Suppose that $\chi (-1)=1$ and $q\equiv 1\pmod4$.
          
                The reduction $\overline{R_{\chi}} ^{\emph{ss}}$ is isomorphic to $(\varphi _1\circ \mathbf{a} _F) \oplus (\varphi _2\circ \mathbf{a} _F)$ with $\varphi _1$ and $\varphi _2$ satisfying
                \[ \varphi _1(\zeta _F)^2=\overline{\chi} (\zeta _F), \ \varphi _1(\varpi _F)=\overline{\chi} (\zeta _F)^{(q-1)/4}\overline{\chi} (\varpi _E) \text{ and } \varphi _2=\varphi _1\otimes \overline{\varkappa _{E/F}}, \]
                where $\varkappa _{E/F}$ is the character associated to the quadratic extension $E/F$.
                
                The set $I_2$ has two elements and every element $\tilde{\chi} \in I_2$ is isomorphic to $(\varphi \circ \mathrm{Nrd})$ with $\varphi$ satisfying
                \[ \varphi (\zeta _F)^2=-\overline{\chi} (\zeta _F), \ \varphi (\varpi _F)=(-\overline{\chi} (\zeta _F))^{(q-1)/4}\overline{\Delta _{\chi}} (\varpi _E)\overline{\chi} (\varpi _E). \]
                The set $I_1$ has $(q-1)/2$ elements and every element $\pi \in I_1$ is isomorphic to $\pi _{\nu}$ with $\nu$ satisfying
                \[ \nu (\zeta _{E_0})^{q+1}=-\overline{\chi} (\zeta _F), \ \nu (\zeta _{E_0})^2\neq -\overline{\chi} (\zeta _F) \text{ and } \nu (\varpi _F)=\overline{\chi} (\varpi _F). \]
                
                If $n\neq 1$, any irreducible mod $p$ representations of $D^{\times}$ with a central character $(\overline{\Delta _{\chi} \otimes \chi})|_{F^{\times}}$ occur in $\overline{\Pi} _{\Delta _{\chi} \otimes \chi} ^{\emph{ss}}$. If $n=1$, one-dimensional mod $p$ representations of the form $\tilde{\xi} \otimes \tilde{\iota}$ with $\tilde{\xi} \in I_2$ do not occur.

          \item \label{f}
                Suppose that $\chi (-1)=1$ and $q\equiv 3\pmod4$.
          
                The reduction $\overline{R_{\chi}} ^{\emph{ss}}$ is isomorphic to $(\varphi _1\circ \mathbf{a} _F) \oplus (\varphi _2\circ \mathbf{a} _F)$ with $\varphi _1$ and $\varphi _2$ satisfying
                \[ \varphi _1(\zeta _F)^2=\overline{\chi} (\zeta _F), \ \varphi _1(\varpi _F)=\varphi _1 (\zeta _F)^{(q-1)/2}\overline{\chi} (\varpi _E) \text{ and } \varphi _2=\varphi _1\otimes \overline{\varkappa _{E/F}}, \]
                where $\varkappa _{E/F}$ is the character associated to the quadratic extension $E/F$.

                The set $I_2$ is empty and every element $\pi \in I_1$ is isomorphic to $\pi _{\nu}$ with $\nu$ satisfying
                \[ \nu (\zeta _{E_0})^{q+1}=-\overline{\chi} (\zeta _F), \ \nu (\varpi _F)=-\overline{\chi} (\varpi _F). \]
                
                Any irreducible mod $p$ representations of $D^{\times}$ with a central character $(\overline{\Delta _{\chi} \otimes \chi})|_{F^{\times}}$ occur in $\overline{\Pi} _{\Delta _{\chi} \otimes \chi} ^{\emph{ss}}$ with equal multiplicities. There are $(q+1)/2$ such representations.
 
         \end{enumerate}         
  \end{enumerate}
 \end{Thm}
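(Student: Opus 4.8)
The plan is to deduce the entire statement by assembling results already established. On the $\mathcal{W}_F$-side, Proposition \ref{Prop:Galoisred} (together with Remark \ref{Rem:modGalois}) gives $\overline{R}_\chi^{\text{ss}}\cong\rho_{\overline{\chi}}^{\text{ss}}$ and its explicit shape; on the $D^\times$-side, Theorem \ref{Thm:quaternionred} (with Proposition \ref{Prop:quaternionmodpram} and Remarks \ref{Rem:quaternionunram}, \ref{Rem:quaterniontameram}) gives $\overline{\Pi}_{\chi'}^{\text{ss}}$ for any admissible pair $\chi'$; and \S\ref{subsec:corres} describes the twisting character $\Delta_\chi$. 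Accordingly I would first reduce to the case of a \emph{minimal} pair $\chi$, exactly as in the proof of Theorem \ref{Thm:quaternionred}: replacing $\chi$ by $(\Phi\circ\mathrm{N}_{E/F})\otimes\chi$ multiplies $R_\chi$ by $\Phi\circ\mathbf{a}_F$ and $\Pi_{\Delta_\chi\otimes\chi}$ by $\Phi\circ\mathrm{Nrd}$, compatibly with reduction, and since $\Delta_\chi$ has level zero this leaves $n$ unchanged. Then $\Pi_{\Delta_\chi\otimes\chi}$ is precisely the $D^\times$-representation attached to the admissible pair $(E/F,\Delta_\chi\otimes\chi)$, and one is simply applying Theorem \ref{Thm:quaternionred} to it.

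The second step is to identify $\overline{\Delta_\chi\otimes\chi}=\overline{\Delta_\chi}\otimes\overline{\chi}$ from \S\ref{subsec:corres}. When $E/F$ is unramified, $\Delta_\chi=\delta$, so $\overline{\Delta_\chi\otimes\chi}=\delta\otimes\overline{\chi}$; since $\delta$ is unramified and regularity of a mod $p$ character of an unramified $E$ depends only on its restriction to $\mu_E$, $\overline{\Delta_\chi\otimes\chi}$ is regular iff $\overline{\chi}$ is, iff $\chi(\zeta_{E_0})^{q-1}\neq1$, and Theorem \ref{Thm:quaternionred}(\ref{Thm:unram}) with $\overline{\chi}$ replaced by $\delta\otimes\overline{\chi}$ yields the displayed formula in \ref{2} and, via Remark \ref{Rem:quaternionunram}, the explicit characters $\varphi_i$ of \ref{a} and \ref{b}. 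When $E/F$ is totally ramified (so $n$ odd and $p\neq2$), $\overline{\Delta_\chi}$ is trivial on $U_E^1$, equals $\varkappa_{E/F}$ on $F^\times$, and has $\overline{\Delta_\chi}(\varpi_E)=\Delta_\chi(\varpi_E)$; then Theorem \ref{Thm:quaternionred}(\ref{Thm:ram}) applied to $\Delta_\chi\otimes\chi$ gives the formula in \ref{3}, with $(\#I_1,\#I_2)$ controlled by $(\overline{\Delta_\chi\otimes\chi})(-1)=\varkappa_{E/F}(-1)\,\overline{\chi}(-1)$. Writing out $\varkappa_{E/F}(-1)=(-1)^{(q-1)/2}$ turns the dichotomy "$I_2$ empty versus of size two'' into the stated criterion on $(-1)^{(q-1)/2}\chi(-1)$ and is what forces the four-way split on $\chi(-1)$ and $q\bmod 4$ in \ref{c}--\ref{f}.

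The third and most laborious step is to rewrite each constituent in the explicit form demanded by the "more precisely'' clauses and to check the comparison with the mod $p$ correspondence $\rho_\xi\mapsto\pi_{\delta\otimes\xi}$. On the $\mathcal{W}_F$-side this is just Proposition \ref{Prop:Galoisred} restated under the relevant regularity hypotheses. On the $D^\times$-side I would, for each two-dimensional factor $\pi_{\delta\otimes\overline{\chi}\otimes\tau}$ in the unramified case, and for each $\pi\in I_1$ and each $\tilde\chi\in I_2$ in the ramified case, evaluate the underlying character at the standard generators $\zeta_{E_0},\zeta_F,\varpi_E,\varpi_F$ using Remarks \ref{Rem:regchar}, \ref{Rem:quaternionunram}, \ref{Rem:quaterniontameram} and \ref{Rem:modGalois}, keeping track of the signs coming from $\delta$, from $\Delta_\chi(\varpi_E)$, and from the exponents $(q\pm1)/2$ and $(q-1)/4$ occurring in the tame correspondence; a short computation with roots of unity (for instance that a regular $\xi$ on $E_0^\times$ with $\xi(\zeta_{E_0})^2\in\mu_F$ automatically satisfies $\xi(\zeta_{E_0})^{q-1}=-1$) then matches the mod $p$ correspondent of $\overline{R}_\chi^{\text{ss}}$ to the asserted summand of $\overline{\Pi}^{\text{ss}}$. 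Comparing the multiplicities $\frac{q^{n/2}\mp q}{q+1}$ with $\frac{q^{n/2}\pm1}{q+1}$ (resp. $q^{(n-1)/2}$ with $\frac{q^{(n-1)/2}\pm1}{2}$) gives the "one less / one more'' assertions, and reading off the decomposition together with Proposition \ref{Prop:quaternionmodpram} gives the claim that every irreducible mod $p$ representation with the prescribed central character occurs. The genuinely delicate points, where I expect to spend the most care, are: (i) the exact value of $\Delta_\chi(\varpi_E)$ in the totally ramified case, which \S\ref{subsec:corres} leaves unstated and which must be pinned down (via the order-four/order-two dichotomy, $q\equiv3\pmod4$ versus $q\equiv1\pmod4$) to describe $I_2$ precisely; and (ii) the small-level boundary cases $n=1$ and $n=2$, where a multiplicity of the form $\frac{q^{n/2}-q}{q+1}$ or $\frac{q^{(n-1)/2}-1}{2}$ collapses to zero and a constituent present for larger $n$ drops out, accounting for the exceptional clauses at the ends of \ref{b}, \ref{d} and \ref{e}.
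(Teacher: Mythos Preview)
Your proposal is correct and follows exactly the approach the paper intends: the paper gives no separate proof of Theorem~\ref{Thm:tameincomp}, presenting it explicitly ``for easy reference'' as a collation of Proposition~\ref{Prop:Galoisred}, Theorem~\ref{Thm:quaternionred}, and the various Remarks, so your plan of applying Theorem~\ref{Thm:quaternionred} to the twisted pair $(E/F,\Delta_\chi\otimes\chi)$ and reading off the constituents via Remarks~\ref{Rem:quaternionunram}, \ref{Rem:quaterniontameram}, \ref{Rem:modGalois} is precisely what is meant. One small reassurance regarding your concern~(i): the theorem deliberately leaves $\overline{\Delta_\chi}(\varpi_E)$ as an unevaluated symbol in the formulas of~\ref{d} and~\ref{e}, so you do not need to pin it down.
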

 \begin{Rem}
  For representations in $\mathcal{G}_2^0(F) \setminus \mathcal{G}_2^{\emph{nr}}$ and $\mathcal{A}_1^0(D) \setminus \mathcal{A}_1^{\emph{nr}}(D)$, we do not give any theorem similar to \emph{Theorem } $\ref{Thm:tameincomp}$. To have explicit descriptions, we only need to combine \emph{Theorem } $\ref{Thm:quaternionred wild}$ and \emph{Theorem } $\ref{Thm:Galoisred wild}$, and the situation is simpler.
 \end{Rem}
 \begin{Rem} \label{Rem:interpretation}
  Let $R$ and $\Pi$ represent elements in $\mathcal{G}_2^0(F)$ and $\mathcal{A}_1^0(D)$ corresponding to each other under the composite of LLC and LJLC, and suppose that they are $p$-integral. If the reduction $\overline{\Pi} ^{\emph{ss}}$ were isotypic $($i.e. had only one irreducible factor with a multiplicity$)$, one could ask whether the unique irreducible factor of $\overline{\Pi} ^{\emph{ss}}$ would be the image of the reduction $\overline{R} ^{\emph{ss}}$ under the mod $p$ correspondence $($at least when $\overline{R} ^{\emph{ss}}$ is irreducible$)$. However, as we saw in \emph{Remark } $\ref{Rem:quaternionred}$, any irreducible mod $p$ representations with the suitable central character occur in the reduction in most cases and there seems to be no obvious meaning of the compatibility then. Indeed, irreducible mod $p$ representations of $D^{\times}$ are automatically trivial on $U_D^1$ and irreducible $($ordinary$)$ representations of $D^{\times}$ being trivial on $U_D^1$ simply means that they are of level zero. Therefore, it is no wonder if ``the compatibility'' holds only in the level zero case. 
   
  Still, if $\overline{R} ^{\emph{ss}}$ is irreducible, then the image $\pi$ of $\overline{R} ^{\emph{ss}}$ under the mod $p$ correspondence does occur in $\overline{\Pi} ^{\emph{ss}}$ in every case. We may at least ask if there exists any natural way to single out $\pi$ among other irreducible factors occurring in $\overline{\Pi} ^{\emph{ss}}$.
         
         \begin{enumerate}
          \item If $R$ is expressed as $R_{\chi}$ where $(E/F, \chi)$ is an admissible pair with $E/F$ unramified, then $\pi$ can be characterized as \emph{the unique two-dimensional irreducible factor with the multiplicity distinct from that of any other two-dimensional factors}. $($Under the assumption that $E/F$ is unramified,$)$ all two-dimensional irreducible factors have the same multiplicities if and only if $\overline{\chi}$ is irregular, in accordance with the condition for $\overline{R}_{\chi} ^{\emph{ss}}$ to be reducible. However, the multiplicity of $\pi$ is one less than that of the other two-dimensional factors if $n\equiv 2\pmod4$ whereas it is one more if $n\equiv 0\pmod4$, which appears somewhat odd.
         
          \item If $R$ is expressed as $R_{\chi}$ where $(E/F, \chi)$ is an admissible pair with $E/F$ totally ramified, then $\pi$ can be characterized as \emph{the unique two-dimensional irreducible factor $\pi _{\nu}$ such that $\nu (\zeta _{E_0})^{q-1}=-1$, i.e. $(\pi _{\nu} |_{U_D})^2$ acts as a mod $p$ character}. $($Under the assumption that $E/F$ is totally ramified,$)$ no two-dimensional irreducible factor has the required property if and only if $\overline{\chi}$ is irregular, in accordance with the condition for $\overline{R}_{\chi} ^{\emph{ss}}$ to be reducible.
         \item If $p=2$ and $R$ does not arise from any admissible pair, then $\pi$ can be characterized as \emph{the unique two-dimensional irreducible factor $\pi _{\nu}$ such that $\nu (\zeta _{E_0})^{q-1}$ is a primitive third root of unity, i.e. $(\pi _{\nu} |_{U_D})^3$ acts as a mod $p$ character.} However, provided that $q\equiv -1 \pmod 3$, the reduction $\overline{\Pi} ^{\emph{ss}}$ has an irreducible two-dimensional factor with the required property even if $\overline{R} ^{\emph{ss}}$ is reducible.
         \end{enumerate}
         
         We do not know any uniform interpretation of these characterizations.
         
         It seems more difficult to say something about the cases where the reductions $\overline{R} ^{\emph{ss}}$ are reducible.
 \end{Rem}

\end{document}